\definecolor{deepcarrotorange}{rgb}{0.91, 0.41, 0.17}
\newtheorem{theoremm}{Theorem}[section]
\declaretheorem[style=plain,name=Theorem,numberlike=theoremm]{theorem}
\declaretheorem[style=plain,name=Lemma,numberlike=theoremm]{lemma}
\declaretheorem[style=plain,name=Proposition,numberlike=theoremm]{proposition}
\declaretheorem[style=plain,name=Corollary,numberlike=theoremm]{corollary}
\declaretheorem[style=plain,name=Conjecture,numbered=no]{conjecture*}
\declaretheorem[style=definition,name=Definition,numberlike=theorem]{definition}
\declaretheorem[style=remark,name=Example,numberlike=theorem]{example}
\declaretheorem[style=remark,name=Remark,numberlike=theorem]{remark}
\declaretheorem[style=remark,name=Notation,numberlike=theorem]{notation}
\font\sc=rsfs10
\newcommand{\csym}[1]{\sc\mbox{#1}\hspace{1.0pt}}
\font\scc=rsfs7
\newcommand{\ccf}[1]{\scc\mbox{#1}\hspace{0.5pt}}
\newcommand{\on}[1]{\operatorname{#1}}
\newcommand{\setj}[1]{\left\{ #1 \right\}}
\newcommand{\hcomp}{\circ_{h}}
\newcommand{\vcomp}{\circ_{v}}
\newcommand{\idem}{complete set of pairwise orthogonal, primitive idempotents}
\newcommand{\zz}{\rightleftarrows}
\newcommand{\xiso}{\xrightarrow{\sim}}
\DeclareMathAlphabet\EuRoman{U}{eur}{m}{n}
\SetMathAlphabet\EuRoman{bold}{U}{eur}{b}{n}
\newcommand{\euler}{\EuRoman}
\begin{document}
\title[Weighted colimits of 2-representations and star algebras]{Weighted colimits of 2-representations \\ and star algebras}
\author{Mateusz Stroi{\' n}ski}
\address{Uppsala Universitet}
\email{mateusz.stroinski@math.uu.se}

\begin{abstract}
 We apply the theory of weighted bicategorical colimits to study the problem of existence and computation of such colimits of birepresentations of  finitary bicategories. 
 The main application of our results is the complete classification of simple transitive birepresentations of a bicategory studied previously by Zimmermann. The classification confirms a conjecture he has made.
\end{abstract}

\maketitle
\tableofcontents

\section{Introduction}
 Systematic study of finitary $2$-representations of finitary $2$-categories was initiated by the series of papers \cite{MM1}, \cite{MM2}, \cite{MM3}, \cite{MM4}, \cite{MM5}, \cite{MM6}, after a number of successful instances and applications of categorification, and, in particular, categorical actions, in various areas of mathematics. These include the advances in knot theory following the introduction of Khovanov homology in \cite{Kh}, the proof of Brou{\' e}'s abelian defect group conjecture for symmetric groups in \cite{CR}, the categorification of quantum groups developed in \cite{KL1}, \cite{KL2}, \cite{KL3} as well as \cite{Ro},  \cite{Os}, \cite{ENO}, and many more.
 
 One of the main purposes of $2$-representation theory is to gain a better abstract understanding of categorical actions of $2$-categories (and, by extension, also of the $2$-categories themselves), such as those in the above listed applications. Finitary $2$-categories can be viewed as a $2$-categorical counterpart to the classical notion of a finite-dimensional algebra, and from this point of view, $2$-representation theory is analogous to classical representation theory of such algebras, which aims at a better understanding of the linear actions of the algebra. 
 
 One of the first problems to consider in the classical setting is the classification of simple modules. A $2$-representation theoretic analogue of simple modules, known as simple transitive $2$-representations, was introduced in \cite{MM5}, together with an associated weak Jordan-H{\" o}lder theory. Classification of such $2$-representations has since become one of the central problems of the theory, with many complete classification results obtained, such as those in \cite{MM5} and \cite{MMMTZ2}. See also \cite{Ma} for a slightly outdated overview.
 
 A common feature of most of these results is the quasi-fiat structure of the $2$-category considered. In the theory of tensor categories, this corresponds to rigidity of the tensor category. In the former case, one requires all $1$-morphisms to have left and right adjoints; in the latter, one requires all objects to have left and right duals. Continuing the analogy with finite-dimensional algebras, a fiat $2$-category can be viewed as analogous to a finite-dimensional algebra with an involution.
 
 The most notable exception  is the main result of \cite{MMZ}, which classifies simple transitive $2$-representations of a large family of finitary $2$-categories which need not be fiat. This is done by embedding the studied $2$-category in a $2$-category with additional adjunctions and lifting $2$-representations to this bigger $2$-category.
 
 The present paper classifies the simple transitive $2$-representations of a non-fiat $2$-category $\csym{B}_{n}^{\on{str}}$ which does not have the crucial {\it cell symmetry} of the non-fiat $2$-categories considered by \cite{MMZ}. The classification for $\csym{B}_{n}^{\on{str}}$ confirms \cite[Conjecture 1]{Zi}, establishing a bijection between equivalence classes of simple transitive $2$-representations and set partitions of $\setj{1,\ldots,n}$. 
 In contrast to \cite{MMZ}, neither the construction nor the classification employs an auxiliary fiat $2$-category. 
 Instead, we use weighted bicategorical colimits of prior known, not necessarily simple transitive, $2$-representations of $\csym{B}_{n}^{\on{str}}$, to construct new simple transitive $2$-representations, and use the universal properties for the classification. The weighted colimits used can be thought of as a $2$-representation theoretic categorification of quotient modules in classical representation theory. The prerequisite facts and a description of our application of such colimits is given in Section~\ref{s3}. To our best knowledge, this approach has not been considered previously in the study of $2$-representations.
 
 It was observed in \cite{Zi} that for any simple transitive $2$-representation $\mathbf{M}$ of $\csym{B}_{n}^{\on{str}}$, there is a distinguished $2$-transformation $\Sigma: \mathbf{C} \rightarrow \mathbf{M}$, i.e. a functor intertwining the $\csym{B}_{n}^{\on{str}}$-actions, from the cell $2$-representation $\mathbf{C}$. 
 Further, \cite{Zi} shows that $\Sigma$ sends indecomposable objects to indecomposable objects, possibly identifying certain isomorphism classes of such objects. The kind of potential identification observed there cannot be captured using the more familiar categorical constructions, such as orbit categories and skew group categories. 
 Our construction uses the analogy with finite-dimensional algebras: if we instead considered a finite-dimensional algebra $A$ and an $A$-module $C$, given elements $x,y \in C$ we could universally construct a morphism $\varphi$ of modules from $C$, satisfying $\varphi(x) = \varphi(y)$, as the projection $C \twoheadrightarrow C/\langle x - y \rangle$, which is the coequalizer of the diagram $\begin{tikzcd} A \arrow[r, shift left, "1 \mapsto x"] \arrow[r, shift right, swap, "1 \mapsto y"] & C\end{tikzcd}$. It is this latter realization we mimic in the bicategorical setting, replacing the regular module by the (representable) principal $2$-representation $\mathbf{P}$ and, using Yoneda lemma, viewing indecomposable objects $X,Y$ of $\mathbf{C}$ as parallel $2$-transformations from $\mathbf{P}$ to $\mathbf{C}$. We then study the bicategorical colimit which universally renders the $2$-transformations {\it isomorphic} (rather than equal).
 
 This stands in stark contrast with the techniques employed in the theory of fiat $2$-categories, where one heavily relies on $2$-representations obtained from structures internal to the $2$-category whose $2$-representations are studied, such as (co)algebra $1$-morphisms and internal $\on{Hom}$ studied in \cite{MMMT}, whereas our approach uses the structure of the $2$-category of $2$-representations. In fact, we need to embed the latter $2$-category into the $2$-category of all $\Bbbk$-linear $2$-functors to the $2$-category $\mathbf{Cat}_{\Bbbk}$ of $\Bbbk$-linear categories, which can be viewed as analogous to the embedding of the category $A\!\on{-mod}$ of finitely generated modules over a $\Bbbk$-algebra $A$, to the category $A\!\on{-Mod}$ of all its modules.

 This new approach, which in a sense categorifies quotient modules, can prove useful in similar problems concerning construction and classification of categorical actions. Indeed, a different classification problem for $2$-representations of a non-finitary (although in a sense locally finitary) $2$-category has been solved in \cite{JS} using the above approach, confirming \cite[Conjecture 2]{Jo2} and generalizing it to the setting of \cite{Jo1}. In fact, the $2$-representations constructed in \cite{JS} and in this document are the first non-trivial non-cell $2$-representations constructed in the theory of simple transitive $2$-representations of non-fiat $2$-categories.
 
 We now briefly explain the definition of the $2$-category $\csym{B}_{n}^{\on{str}}$. 
 First, consider the double quiver $\mathtt{R}_{n}$ on the star graph $\mathtt{G}_{n}$ on $n+1$ vertices, as depicted below for $n=5$:
 \newlength{\edgelentgh}
\setlength{\edgelentgh}{1cm}
 \[
\mathtt{G}_{5} = \begin{tikzcd}[row sep={0cm,between origins},column sep={0cm,between origins}] 
&[.3090169944\edgelentgh] &[.5\edgelentgh] 2 &[.5\edgelentgh] &[.3090169944\edgelentgh] \\[.5877852523\edgelentgh] 3 &&&& 1
\\[.24510565163\edgelentgh]
&& 0 \arrow[uu, no head] \arrow[urr, no head] \arrow[ull, no head] \arrow[dl, no head] \arrow[dr, no head] \\[.70595086467\edgelentgh]
& 4 && 5
\end{tikzcd} \quad \text{ and } \quad \mathtt{R}_{5} =
\begin{tikzcd}[row sep={0cm,between origins},column sep={0cm,between origins}] 
&[.3090169944\edgelentgh] &[.5\edgelentgh] 2 &[.5\edgelentgh] &[.3090169944\edgelentgh] \\[.5877852523\edgelentgh] 3 &&&& 1
\\[.24510565163\edgelentgh]
&& 0 \arrow[uu, bend left=15] \arrow[from = uu, bend left=15] \arrow[urr, bend left=15] \arrow[from = urr, bend left=15] \arrow[ull, bend left= 15] \arrow[from=ull, bend left=15] \arrow[dl, bend left=15] \arrow[from=dl, bend left=15] \arrow[dr, bend left=15] \arrow[from = dr, bend left=15] \\[.70595086467\edgelentgh]
& 4 && 5
\end{tikzcd}.
\]     
 The zigzag algebra $A_{n}$ on $\mathtt{G}_{n}$ is a quotient of the path algebra of $\mathtt{R}_{n}$ by the ideal generated by paths $i \rightarrow j \rightarrow k$ with $i\neq k$, together with elements of the form $(i \rightarrow j \rightarrow i) - (i \rightarrow j' \rightarrow i)$, for $j \neq j'$. In particular, $A_{n}$ is weakly symmetric, hence also self-injective.
 Under the \idem\ $\setj{e_{0},e_{1},\ldots,e_{n}}$, induced by the above labelling of the quiver, we may consider the set $\mathtt{L}_{0} = \setj{A_{n}} \cup \setj{A_{n}e_{i} \otimes_{\Bbbk} e_{0}A_{n} \; | \; i = 0,1,\ldots,n}$ of $A_{n}$-$A_{n}$-bimodules. The additive, $\Bbbk$-linear category $\mathcal{B}_{n} := \on{add}\mathtt{L}_{0}$ is a monoidal subcategory of $(A_{n}\!\on{-mod-}A_{n}, \otimes_{A_{n}})$. Observe that $\mathcal{B}_{n}$ is not symmetric or braided. We denote by $\csym{B}_{n}$ the delooping bicategory of $\mathcal{B}_{n}$. The $2$-category $\csym{B}_{n}^{\on{str}}$ is a strictification of $\csym{B}_{n}$. More precisely, it is obtained by delooping the strict monoidal category $\mathcal{B}_{n}^{\on{str}}$ of right exact endofunctors of $A_{n}\!\on{-mod}$ isomorphic to those given by the objects of $\mathcal{B}_{n}$. 
 
 We follow \cite{MMMTZ1} in relaxing the $2$-categorical setup to the bicategorical setup, which allows us to consider simple transitive birepresentations of $\csym{B}_{n}$ rather than simple transitive $2$-representations of $\csym{B}_{n}^{\on{str}}$. As observed in \cite{MMMTZ1}, the resulting two classification problems are equivalent. The bicategorical setup allows for greater flexibility in the computation of colimits.
 
 We can now state our main result precisely:
  \begin{conjecture*}[{\cite[Conjecture 1]{Zi}}, {Theorem~\ref{TheMainTheorem}}]
  Equivalence classes of simple transitive $2$-repre\-senta\-tions of $\csym{B}_{n}^{\on{str}}$ are in bijection with set partitions of $\setj{1,\ldots,n}$.
 \end{conjecture*}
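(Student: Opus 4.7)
The plan is to establish the bijection by constructing one simple transitive birepresentation per set partition and then using the weighted colimit machinery of the paper to prove exhaustiveness.

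For the construction side, to each set partition $\lambda = \{B_1, \ldots, B_k\}$ of $\{1, \ldots, n\}$ I would associate a birepresentation $\mathbf{M}_\lambda$ obtained as a weighted colimit of ``elementary'' birepresentations attached to the individual blocks; concretely, each block $B_i$ determines a subbicategory of $\csym{B}_n^{\on{str}}$ (the sub-star on those branches), and one glues a distinguished simple transitive birepresentation of each such subbicategory along the central object of the star. After fixing this construction, I would check simple transitivity and pairwise inequivalence by computing a decategorified invariant — for instance, the action matrices of the indecomposable $1$-morphisms corresponding to the branches — and observing that these matrices recover the partition $\lambda$.

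For exhaustiveness, let $\mathbf{M}$ be an arbitrary simple transitive birepresentation. Using the classification of simple transitive birepresentations of the ``branch'' subbicategories (which are simpler by induction or by direct inspection, since the sub-stars are well understood), I would restrict $\mathbf{M}$ to each branch and decompose the restriction into simple transitive pieces. The weighted bicategorical colimit results proved earlier in the paper then allow one to reassemble $\mathbf{M}$ from its branch restrictions as a canonical weighted colimit indexed by a combinatorial datum. The crux is to show that this datum is precisely a set partition: namely, two branches lie in the same block of $\lambda$ if and only if their restrictions to $\mathbf{M}$ are compatible in a specified sense (for example, share a common nonzero apex action on a distinguished generator), and all other combinatorial possibilities are excluded by the star shape.

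The principal obstacle is the exhaustiveness step, and within it, showing that the gluing data that assembles $\mathbf{M}$ as a weighted colimit of branch birepresentations is rigid enough to be encoded by a set partition rather than a richer combinatorial object. This requires a careful rigidity argument: the weights must be pinned down by compatibility with the star-shaped composition in $\csym{B}_n^{\on{str}}$, and any two inequivalent candidate weights for the same partition must be shown to produce equivalent birepresentations. The weighted colimit theory developed in the earlier sections of the paper is designed to deliver exactly this rigidity, so the proof ultimately reduces to an application of the general framework to the concrete star-algebra structure, together with an explicit verification for small $n$ to anchor the inductive step.
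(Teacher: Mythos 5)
Your proposal diverges from the paper in both directions of the bijection, and in each direction it leans on a step that is not available. On the construction side, the paper does not glue ``branch'' birepresentations of sub-bicategories along the central object: for each partition $\mathfrak{P}$ it forms a weighted colimit of a single diagram $\mathbf{P}\rightrightarrows\mathbf{C}$ inside $[\csym{B}_{n},\mathbf{Cat}_{\Bbbk}]_{\Bbbk}$, namely an iterated coisoinserter (universally making the Yoneda transformations $\Theta_{i},\Theta_{j}$ isomorphic for $i,j$ in one block) followed by a coequifier (forcing the new invertible modifications to agree with the canonical $\euler{s}_{j,k}$ of Lemma~\ref{IsoActions} after restriction along $\Gamma:\mathbf{N}\rightarrow\mathbf{P}$). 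The reason this second step is indispensable is a point your plan does not engage with: weighted colimits do not preserve finitarity (the paper exhibits a coequalizer in $\mathbf{Cat}_{\Bbbk}$ whose Hom-spaces become infinite dimensional), so one cannot simply assert that a colimit of simple transitive birepresentations is again a finitary birepresentation; the paper has to compute the coisoinserter and coequifier explicitly (Proposition~\ref{DescribeCoiso}, Lemma~\ref{CoequifCalculation}) to see that the result is $A_{r}\!\on{-proj}$, and only then proves simple transitivity (Proposition~\ref{CoeqisoIsSimple}).

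On the exhaustiveness side, the claim that an arbitrary simple transitive $\mathbf{M}$ can be ``reassembled as a canonical weighted colimit of its branch restrictions'' is the genuine gap: no such reconstruction theorem is proved in the paper or follows from its cocompleteness results, restrictions of a simple transitive birepresentation to sub-bicategories need not be simple transitive nor decompose (the weak Jordan--H\"older theory yields filtrations, not decompositions), and there is no canonical way to recover $\mathbf{M}$ from restrictions without exactly the gluing data whose classification is at issue. What the paper actually does is import from \cite{Zi} the partition $\mathfrak{P}_{\mathbf{M}}$ of the action matrices and the strong transformation $\Sigma:\mathbf{C}\rightarrow\mathbf{M}$ (Proposition~\ref{JakobSummary}), and then the crux is a scalar-rigidity argument (Lemmas~\ref{ZeroComponent}, \ref{ProjFunctorFact}, \ref{UniqueUpToScalar}, Corollary~\ref{Equification}) showing that the isomorphisms $\Sigma\circ\Theta_{j}\simeq\Sigma\circ\Theta_{k}$ can be rescaled to satisfy the coequifier condition $\euler{t}_{j,k}\bullet\Gamma=\Sigma\bullet\euler{s}_{j,k}$; only then does the universal property of $\mathbf{C}^{\mathtt{WR}}(\mathfrak{P}_{\mathbf{M}})$ produce a comparison $\widetilde{\Sigma}$, which is seen to be an equivalence by restricting to a transversal (Theorem~\ref{Classification}). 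Your plan omits this rigidity step entirely, and the appeal to induction on $n$ with small-case verification has no counterpart in, and no evident substitute for, this argument.
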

 
 The paper is organized as follows. Section \ref{s2} contains the necessary preliminaries for the techniques of $2$-representation theory we will employ, as well as a complete account of the notation we will use.
 In Section \ref{s3} we give an elementary account of weighted colimits, bicategorical cocompleteness of bicategories of $\Bbbk$-linear pseudofunctors and preservation of weighted colimits by additive and Karoubi envelopes. 
 Section \ref{s4} defines $\csym{B}_{n}$, summarizes and extends the results of \cite{Zi}, giving necessary properties of simple transitive birepresentations of $\csym{B}_{n}$ without proving their existence. Section \ref{s5} constructs the simple transitive birepresentations and uses the results of Section \ref{s4} to obtain the classification.
 ~\\
 
 \noindent \textbf{Acknowledgments.} This research is partially supported by G{\" o}ran Gustafssons Stiftelse. The author would like to thank his advisor, Volodymyr Mazorchuk, for numerous helpful discussions, and also for providing and explaining the proof of Lemma \ref{NilpotentAnn}. The author would also like to thank Richard Garner, and the anonymous referee, for their  valuable remarks.

\section{Preliminaries}\label{s2}

 Throughout the text we always require the structure $2$-morphisms of bicategorical structures to be invertible.
 The resulting setting of bicategories, pseudofunctors, strong transformations and modifications is what we will call the {\it bicategorical setting}, and we will give our results in this setup. In particular, we will study birepresentations of finitary bicategories, following \cite{MMMTZ1}.
 
 Most of our results also hold in what we call the {\it $\mathit{2}$-categorical setting}, where we require the structure $2$-morphisms to be the identities, thus working with $2$-categories, $2$-functors, $2$-transformations and modifications. We will comment on possible differences between the bicategorical and the $2$-categorical results, when suitable.
 
\subsection{Notation}
 
 Our notational conventions largely follow those of \cite{MMMTZ1}, with one difference and a few additions.
 \begin{itemize}
  \item Bicategories are denoted by $\csym{B},\csym{C}$, and the like. 
  Pseudofunctors are denoted by $\mathbf{M},\mathbf{N}$, and the like.
  Strong transformations are denoted by capital Greek letters, e.g. $\Sigma, \Theta$.
  Modifications are denoted by $\euler{m}, \euler{n}$ and the like.
  \item Structure $2$-morphisms of bicategorical structures are denoted by lower case fraktur. In particular, associators are denoted by $\mathfrak{a}$, left unitors are denoted by $\mathfrak{l}$, and right unitors are denoted by $\mathfrak{r}$. These $2$-morphisms are denoted by $\alpha, v^{l}, v^{r}$ in \cite{MMMTZ1}.
  \item Categories are denoted by $\mathcal{C,D}$, and the like; objects in a category $\mathcal{C}$ are denoted by capital letters, such as $X \in \on{Ob}\mathcal{C}$. Morphisms are denoted by lower case letters, such as $f \in \mathcal{C}(X,Y)$. 
  \item Given bicategories $\csym{B},\csym{C}$, we denote the bicategory of pseudofunctors from $\csym{B}$ to $\csym{C}$ by $[\csym{B},\csym{C}]$. 
  \item Objects in a bicategory are denoted by $\mathtt{i,j}$, and the like. $1$-morphisms in a bicategory are denoted by $\mathrm{F},\mathrm{G}$, and the like. $2$-morphisms are denoted by lower case Greek letters, e.g. $\alpha, \beta$. The identity $1$-morphisms of objects will be denoted by $\mathbb{1}_{\mathtt{i}}$ and the like, and the identity $2$-morphisms of $1$-morphisms will be denoted by $\on{id}_{\mathrm{F}}$, and the like.
  \item We write $\mathrm{GF} = \mathrm{G} \hcomp \mathrm{F}$ for composition of $1$-morphisms, $\beta \hcomp \alpha$ for horizontal composition of $2$-morphisms, $\beta \vcomp \alpha$ for vertical composition of $2$-morphisms. The {\it whiskering} of a $1$-morphism $\mathrm{G}$ with a $2$-morphism $\alpha$, given by $\on{id}_{\mathrm{G}} \hcomp \alpha$, will be denoted by $\mathrm{G} \bullet \alpha$. Similarly for $\alpha \bullet \mathrm{G}$.
  \item To emphasize the strictness of $2$-categorical structures, we use the superscript $(-)^{\on{str}}$. For instance, if $\csym{C},\csym{D}$ are $2$-categories, we denote the $2$-category of $2$-functors from $\csym{C}$ to $\csym{D}$ by $[\csym{C},\csym{D}]^{\on{str}}$, and, given a bicategory $\csym{C}$ and a $2$-category $\csym{D}$ biequivalent to $\csym{C}$, we may denote $\csym{D}$ by $\csym{C}^{\on{str}}$.
 \end{itemize}
 Similarly to, \cite{MMMTZ1}, we only indicate the $1$-morphisms indexing the structure $2$-morphisms of a bicategorical structure, while omitting the indexing objects from the notation, thus writing $\mathfrak{a}_{H,G,F}$ rather than $\mathfrak{a}_{H,G,F}^{\mathtt{i,j,k,l}}$ for the associator 
  \[
   (\mathrm{HG})\mathrm{F} \xiso \mathrm{H}(\mathrm{GF}) \text{ of the composition } \mathtt{i} \xrightarrow{\mathrm{F}} \mathtt{j} \xrightarrow{\mathrm{G}} \mathtt{k} \xrightarrow{\mathrm{H}} \mathtt{l}.
  \]
 Note that our notational conventions for objects, $1$-morphisms and $2$-morphisms of a bicategory do not apply to the pseudofunctor bicategory $[\csym{C},\csym{D}]$, where we prioritize our conventions for pseudofunctors, strong transformations and modifications.
 Similarly, our conventions do not apply to functor categories, or to the $2$-category $\mathbf{Cat}$, where we prioritize our separate conventions for categories and functors.
 
\subsection{Finitary bicategories and their birepresentations}

Let $\Bbbk$ be an algebraically closed field of characteristic zero. We remark that, for the statements that do not involve finitary categories, this assumption can be relaxed to the assumption that $\Bbbk$ is a commutative ring. 

Let $\mathbf{Cat}_{\Bbbk}$ denote the $2$-category of small $\Bbbk$-linear categories, $\Bbbk$-linear functors and natural transformations. Under tensor product of $\Bbbk$-linear categories, $\mathbf{Cat}_{\Bbbk}$ becomes a symmetric monoidal $2$-category. 
 For the general definition of a monoidal $2$-category, we refer to \cite[Definition 2.6]{GPS}.

We say that a bicategory $\csym{B}$ is {\it $\Bbbk$-linear} if it is enriched in $\mathbf{Cat}_{\Bbbk}$. Similarly for $\Bbbk$-linear pseudofunctors, strong transformations and modifications. For the general definition and extensive treatment of bicategories enriched in a monoidal bicategory, we refer to \cite{GS}. In the case of $\mathbf{Cat}_{\Bbbk}$, it follows that a $\Bbbk$-linear bicategory is a bicategory $\csym{B}$ such that 
for any $\mathtt{i,j} \in \on{Ob}\csym{B}$, the category $\csym{B}(\mathtt{i,j})$ is $\Bbbk$-linear and horizontal composition $\hcomp$ is $\Bbbk$-bilinear.

$\mathbf{Cat}_{\Bbbk}$ itself is a $\Bbbk$-linear $2$-category: for any $\Bbbk$-linear categories $\mathcal{C},\mathcal{D}$, the category $\mathbf{Cat}_{\Bbbk}(\mathcal{C},\mathcal{D})$ is $\Bbbk$-linear under pointwise formation of $\Bbbk$-linear combinations of natural transformations of $\Bbbk$-linear functors.

Given $\Bbbk$-linear bicategories $\csym{B}, \csym{C}$, a $\Bbbk$-linear pseudofunctor $\mathbf{M}: \csym{B} \rightarrow \csym{C}$ is a pseudofunctor of underlying ordinary bicategories, such that, for any objects $\mathtt{i,j}$ of $\csym{B}$, the local functor $\mathbf{M}_{\mathtt{i,j}}$ is $\Bbbk$-linear. 
A $\Bbbk$-linear strong transformation of $\Bbbk$-linear pseudofunctors is a strong transformation of underlying pseudofunctors, with no additional requirements. Similarly, $\Bbbk$-linear modifications are just modifications of said strong transformations. We will omit specifying $\Bbbk$-linearity of bicategorical structures whenever it is a vacuous condition.

Whenever speaking of an ambient bicategory $\csym{B}$, $2$-category $\csym{C}$ or category $\mathcal{C}$, we implicitly assume it to be essentially small. Our main aim is to prove the existence of certain $\Bbbk$-linear pseudofunctors from a fixed, essentially small, bicategory $\csym{B}$ to $\mathbf{Cat}_{\Bbbk}$. It will become clear that this result is invariant under $\Bbbk$-linear biequivalence, so we may first construct such pseudofunctors from a biequivalent, small $\csym{B}'$ and then pass under biequivalence. Thus, for our purposes, we may further assume that said essentially small structures are in fact small.

 Composition of two $\Bbbk$-linear pseudofunctors is again a $\Bbbk$-linear pseudofunctor, and so the collection of such pseudofunctors between the $\Bbbk$-linear bicategories $\csym{B}$ and $\csym{C}$, together with strong transformations and modifications, forms a bicategory which we denote by $[\csym{B},\csym{C}]_{\Bbbk}$. In accordance with our notational conventions, if $\csym{B},\csym{C}$ are $\Bbbk$-linear $2$-categories, we denote the corresponding $2$-category of $2$-functors, $2$-transformations and modifications by $[\csym{B},\csym{C}]_{\Bbbk}^{\on{str}}$.
 
  We say that a pair 
\[
(\mathbf{F,G}), \quad \mathbf{F}: \csym{C} \rightarrow \csym{D}, \mathbf{G}: \csym{D} \rightarrow \csym{C}
\]
of pseudofunctors is {\it bicategorically adjoint} if there are equivalences of categories
\[
 \csym{D}(\mathbf{F}\mathtt{c},\mathtt{d}) \simeq \csym{C}(\mathtt{c},\mathbf{G}\mathtt{d}),
\]
strongly natural in $\mathtt{c,d}$. If $\csym{C},\csym{D}$ are $2$-categories, $\mathbf{F,G}$ are $2$-functors, and there are isomorphisms of categories
\[
 \csym{D}(\mathbf{F}\mathtt{c},\mathtt{d}) \simeq \csym{C}(\mathtt{c},\mathbf{G}\mathtt{d}),
\]
$2$-natural in $\mathtt{c,d}$, we say that $(\mathbf{F,G})$ is {\it $2$-categorically adjoint}. In particular, we avoid the common terminology calling bicategorical adjunctions biadjunctions, since $1$-categorical ambidextrous adjunctions, abundant in $2$-representation theory, are often referred to as biadjoint pairs. For an extensive account of bicategorical adjunctions, see \cite{Fi}.

Following the observation \cite[2.29]{Ke1}, given a $\Bbbk$-linear category $\mathcal{A}$, the pair  
\[
(- \otimes_{\Bbbk} \mathcal{A}, \mathbf{Cat}_{\Bbbk}(\mathcal{A},-))
\]
of $2$-endofunctors of $\mathbf{Cat}_{\Bbbk}$ is $2$-categorically adjoint, and hence the symmetric monoidal $2$-category $\mathbf{Cat}_{\Bbbk}$ is {\it closed.}
As remarked in \cite[Section 5]{GS}, given $\Bbbk$-linear bicategories $\csym{B},\csym{C},\csym{D}$, we may form the $\Bbbk$-linear bicategory $\csym{B} \otimes_{\Bbbk} \csym{C}$, given by products on the level of objects and $1$-morphisms and by tensor product over $\Bbbk$ on the level of $2$-morphisms and structure $2$-morphisms, which yields the canonical $\Bbbk$-linear biequivalences
\[
 [\csym{B},[\csym{C},\csym{D}]_{\Bbbk}]_{\Bbbk} \simeq [\csym{B} \otimes_{\Bbbk} \csym{C}, \csym{D}]_{\Bbbk} \simeq [\csym{C} \otimes_{\Bbbk} \csym{B}, \csym{D}]_{\Bbbk} \simeq [\csym{C},[\csym{B},\csym{D}]_{\Bbbk}]_{\Bbbk}.
\]
We may also form the $\Bbbk$-linear bicategory $\csym{B}^{\on{op}}$, by reversing the direction of $1$-morphisms in $\csym{B}$.
 
 We say that a $\Bbbk$-linear category $\mathcal{C}$ is {\em finitary} if it is equivalent to the category $A\!\on{-proj}$ of finite dimensional projective modules over a finite dimensional associative $\Bbbk$-algebra $A$. 
 
 \begin{definition}
 A {\it finitary bicategory} is a bicategory $\csym{B}$ which is $\Bbbk$-linear and such that the category $\csym{B}(\mathtt{i,j})$ is finitary, for all $\mathtt{i,j} \in \on{Ob}\csym{B}$. 
 For the remainder of this section, let $\csym{B}$ be a finitary bicategory.
 \end{definition}

 Let $\mathfrak{A}_{\Bbbk}^{f}$ denote the $2$-category of finitary categories, $\Bbbk$-linear functors and natural transformations. A {\it finitary birepresentation} of $\csym{B}$ is a $\Bbbk$-linear pseudofunctor from $\csym{B}$ to $\mathfrak{A}_{\Bbbk}^{f}$. We denote the bicategory of finitary birepresentations of $\csym{B}$ by $\csym{B}\!\on{-afmod}$.
 
 Given a finitary category $\mathcal{A}$, the split Grothendieck group $[\mathcal{A}]_{\oplus}$ of $\mathcal{A}$ is a free abelian group of finite rank. Let $F: \mathcal{A} \rightarrow \mathcal{B}$ be a $\Bbbk$-linear functor between finitary categories. Under a choice of bases for $[\mathcal{A}]_{\oplus}$ and $[\mathcal{B}]_{\oplus}$, the induced group homomorphism $[F]_{\oplus}$ is represented by a matrix. 
 Given a finitary birepresentation $\mathbf{M}$ of $\csym{B}$, together with choices of bases for $[\mathbf{M}(\mathtt{i})]$ as $\mathtt{i} \in \csym{B}$, we obtain a matrix $[\mathbf{M}\mathrm{F}]_{\oplus}$ for every $1$-morphism $\mathrm{F}$ of $\csym{B}$ - the so-called {\it action matrix} of $\mathrm{F}$ under $\mathbf{M}$.
 
 A $\Bbbk$-linear, abelian category $\mathcal{C}$ is said to be {\it finite} if it is equivalent to the category $A\!\on{-mod}$ of finite dimensional modules over a finite dimensional associative $\Bbbk$-algebra $A$. We let $\mathfrak{R}_{\Bbbk}$ denote the $2$-category of $\Bbbk$-linear finite abelian categories, right exact functors and natural transformations. An {\it abelian birepresentation} of $\csym{B}$ is a $\Bbbk$-linear pseudofunctor from $\csym{B}$ to $\mathfrak{R}_{\Bbbk}$. Given a finitary birepresentation $\mathbf{M}$ of $\csym{B}$, its {\it projective abelianization} $\overline{\mathbf{M}}$ is an abelian birepresentation such that $\mathbf{M}$ can be recovered from $\overline{\mathbf{M}}$ by restricting to certain subcategories equivalent to $\overline{\mathbf{M}}(\mathtt{i})\!\on{-proj}$, for $\mathtt{i} \in \on{Ob}\csym{B}$. We refer the reader to \cite{MM2} for details, and \cite{MMMT} for an improved construction. We will only use abelianization once in this document, and in that case the simpler construction of \cite{MM2} can be used.
 
 We say that a birepresentation of $\csym{B}$ is {\it transitive} if, for any $\mathtt{i,j} \in \on{Ob}\csym{B}$ and any $X \in \on{Ob}\mathbf{M}(\mathtt{i}), Y \in \on{Ob}\mathbf{M}(\mathtt{j})$, there is a $1$-morphism $\mathrm{F} \in \csym{B}(\mathtt{i,j})$ such that $Y$ is isomorphic to a direct summand of $\mathbf{M}\mathrm{F}(X)$. 
 
 A {\it $\csym{B}$-stable ideal} $\mathbf{I}$ of a finitary birepresentation $\mathbf{M}$ of $\csym{B}$ is a tuple $\big(\mathbf{I}(\mathtt{i})\big)_{\mathtt{i} \in \on{Ob}\ccf{B}}$ of ideals of $\mathbf{M}(\mathtt{i})$ such that, for any $X\xrightarrow{f} Y \in \mathbf{I}(\mathtt{i})$ and any $\mathrm{F} \in \on{Ob}\csym{B}(\mathtt{i,j})$, we have $\mathbf{M}\mathrm{F}(f) \in \mathbf{I}(\mathtt{j})$. 
 We say that $\mathbf{I}$ is {\it proper} if there is $\mathtt{i}$ such that $\setj{0} \subsetneq \mathbf{I}(\mathtt{i}) \subsetneq \mathbf{M}(\mathtt{i})$.
 For any $\alpha \in \csym{B}(\mathtt{i,j})(\mathrm{F},\mathrm{F}')$, we may define the {\it evaluation of $\alpha$ at $f$} as
 \[
  (\mathbf{M}\alpha)_{Y}\circ \mathbf{M}\mathrm{F}(f) = \mathbf{M}\mathrm{F}'(f) \circ (\mathbf{M}\alpha)_{X} \in \mathbf{I}(\mathtt{j}).
 \]
 
 \begin{lemma}\label{Preimages}
  Let $\mathbf{M} \xrightarrow{\Theta} \mathbf{N}$ be a strong transformation of finitary birepresentations of a finitary bicategory $\csym{B}$. Let $\mathbf{I}$ be a $\csym{B}$-stable ideal of $\mathbf{N}$. Given $\mathtt{i} \in \on{Ob}\csym{B}$ and $X,Y \in \mathbf{M}(\mathtt{i})$, let
  \[
   (\Theta^{-1}\mathbf{I})(\mathtt{i})(X,Y) := \setj{X \xrightarrow{f} Y \; | \; \Theta_{\mathtt{i}}(f) \in \mathbf{I}}.
  \]
 Then the assignment $\Theta^{-1}\mathbf{I} = \left( \Theta^{-1}\mathbf{I}(\mathtt{i})\right)_{\mathtt{i} \in \on{Ob}\csym{B}}$ gives a $\csym{B}$-stable ideal of $\mathbf{M}$.
 \end{lemma}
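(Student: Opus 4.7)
The plan is to prove the lemma in two stages matching the two ingredients in the definition of a $\csym{B}$-stable ideal: first, that each $(\Theta^{-1}\mathbf{I})(\mathtt{i})$ is an ideal of $\mathbf{M}(\mathtt{i})$, and second, that the resulting tuple is closed under the action of $1$-morphisms. The first stage is routine: since $\Theta_{\mathtt{i}}$ is $\Bbbk$-linear and $\mathbf{I}(\mathtt{i})$ is an ideal of $\mathbf{N}(\mathtt{i})$, the defining condition $\Theta_{\mathtt{i}}(f) \in \mathbf{I}(\mathtt{i})$ is immediately preserved under $\Bbbk$-linear combinations of parallel morphisms, and under pre- and post-composition with arbitrary morphisms of $\mathbf{M}(\mathtt{i})$. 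I would dispatch this without detailed calculation.

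The substantive step is $\csym{B}$-stability. Given $f: X \to Y$ with $\Theta_{\mathtt{i}}(f) \in \mathbf{I}(\mathtt{i})$ and $\mathrm{F} \in \on{Ob}\csym{B}(\mathtt{i,j})$, I must show $\Theta_{\mathtt{j}}\bigl(\mathbf{M}\mathrm{F}(f)\bigr) \in \mathbf{I}(\mathtt{j})$. The key tool is the coherence natural isomorphism
\[
\Theta_{\mathrm{F}}: \Theta_{\mathtt{j}} \circ \mathbf{M}\mathrm{F} \xiso \mathbf{N}\mathrm{F} \circ \Theta_{\mathtt{i}}
\]
attached to $\Theta$ as a strong transformation. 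Its naturality at $f$ gives the equality
\[
(\Theta_{\mathrm{F}})_{Y} \circ \Theta_{\mathtt{j}}\bigl(\mathbf{M}\mathrm{F}(f)\bigr) = \mathbf{N}\mathrm{F}\bigl(\Theta_{\mathtt{i}}(f)\bigr) \circ (\Theta_{\mathrm{F}})_{X}.
\]
Since $\mathbf{I}$ is $\csym{B}$-stable, $\mathbf{N}\mathrm{F}\bigl(\Theta_{\mathtt{i}}(f)\bigr) \in \mathbf{I}(\mathtt{j})$, and post-composition with the arbitrary morphism $(\Theta_{\mathrm{F}})_{X}$ keeps the right-hand side in $\mathbf{I}(\mathtt{j})$. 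Invertibility of $(\Theta_{\mathrm{F}})_{Y}$ then lets me solve for the left factor and conclude
\[
\Theta_{\mathtt{j}}\bigl(\mathbf{M}\mathrm{F}(f)\bigr) = (\Theta_{\mathrm{F}})_{Y}^{-1} \circ \mathbf{N}\mathrm{F}\bigl(\Theta_{\mathtt{i}}(f)\bigr) \circ (\Theta_{\mathrm{F}})_{X} \in \mathbf{I}(\mathtt{j}).
\]

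No serious obstacle is anticipated, as the argument is a formal consequence of pseudonaturality combined with the ideal structure of $\mathbf{I}$. The only delicate point is that invertibility of the coherence datum $\Theta_{\mathrm{F}}$ is essential---the analogous statement for merely lax transformations would fail in general---which is why I emphasise the \emph{strong} part of the hypothesis. In the strict $2$-categorical variant, $\Theta_{\mathrm{F}}$ is the identity and the naturality square collapses to $\Theta_{\mathtt{j}}(\mathbf{M}\mathrm{F}(f)) = \mathbf{N}\mathrm{F}(\Theta_{\mathtt{i}}(f))$, making the conclusion immediate.
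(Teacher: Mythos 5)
Your proposal is correct and follows essentially the same route as the paper: the paper also notes that $\Bbbk$-linearity of $\Theta_{\mathtt{i}}$ gives the ideal property pointwise, and then uses the naturality square of the invertible coherence $2$-morphism $\Theta_{\mathrm{F}}$ together with $\csym{B}$-stability of $\mathbf{I}$ to conclude that $\Theta_{\mathtt{j}}\bigl(\mathbf{M}\mathrm{F}(f)\bigr)$ lies in $\mathbf{I}(\mathtt{j})$. The only (immaterial) difference is the direction convention for $\Theta_{\mathrm{F}}$, and your observation that invertibility is what makes the cancellation work is consistent with the paper's implicit use of it.
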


 \begin{proof}
  For every $\mathtt{i}$, the component $\Theta_{\mathtt{i}}$ is a $\Bbbk$-linear functor, and thus $(\Theta^{-1}\mathbf{I})(\mathtt{i})$ is an ideal of $\mathbf{M}(\mathtt{i})$.
  To see that $\Theta^{-1}\mathbf{I}$ is $\csym{B}$-stable, let $\mathrm{F} \in \csym{B}(\mathtt{i,j})$, and let $X\xrightarrow{f} Y$ be a morphism of $(\Theta^{-1}\mathbf{I})(\mathtt{i})$. We then have the commutative square
  \[
   \begin{tikzcd}
    \mathbf{N}\mathrm{F}\Theta_{\mathtt{i}}(X) \arrow[d, "(\Theta_{\mathrm{F}})_{X}"] \arrow[r, "\mathbf{N}\mathrm{F}\Theta_{\mathtt{i}}(f)"] & \mathbf{N}\mathrm{F}\Theta_{\mathtt{i}}(X) \arrow[d, "(\Theta_{\mathrm{F}})_{Y}"] \\
    \Theta_{\mathtt{j}}\mathbf{M}\mathrm{F}(X) \arrow[r, "\Theta_{\mathtt{j}}\mathbf{M}\mathrm{F}(f)"] & \Theta_{\mathtt{j}}\mathbf{M}\mathrm{F}(Y) 
   \end{tikzcd}
  \]
 where the upper horizontal arrow lies in $\mathbf{I}(\mathtt{j})$ by assumption, and hence the lower horizontal arrow also lies in $\mathbf{I}(\mathtt{j})$. This proves that $\mathbf{M}\mathrm{F}(f) \in (\Theta^{-1}\mathbf{I})(\mathtt{j})$. 
 \end{proof}
 
 \begin{definition}
 A finitary birepresentation $\mathbf{M}$ of $\csym{B}$ is {\it simple transitive} if it has no proper $\csym{B}$-stable ideals. In particular, a simple transitive birepresentation is transitive.
 \end{definition}
 
 \subsection{Cells}
 
 The {\em left preorder} $\leq_{L}$ on the set of isomorphism classes of indecomposable $1$-morphisms of $\csym{B}$ is defined by writing $\mathrm{F} \leq_{L} \mathrm{G}$ if there is a $1$-morphism $\mathrm{H}$ such that $\mathrm{G}$ is a direct summand of $\mathrm{HF}$. We denote the resulting equivalence relation by $\sim_{L}$, and refer to the equivalence classes as {\em left cells.} Similarly one defines the right and two-sided preorders $\leq_{R}, \leq_{J}$, together with the right and two-sided equivalence relations and right and two-sided cells.
 
 Let $\mathcal{L}$ be a left cell of $\csym{B}$. There is then a unique object $\mathtt{i}$ of $\csym{B}$ which is the domain of all $1$-morphisms in $\mathcal{L}$. To $\mathcal{L}$ we associate a simple transitive subquotient $\mathbf{C}_{\mathcal{L}}$ of the principal $2$-representation $\mathbf{P}_{\mathtt{i}} := \csym{B}(\mathtt{i},-)$; see \cite[Subsection 3.3]{MM5} for details.
 
 By \cite[Lemma 1]{CM}, given a transitive birepresentation $\mathbf{M}$ of $\csym{B}$, the set of $J$-cells of $\csym{B}$ not annihilating $\mathbf{M}$ admits a $J$-greatest element, which we call the {\it apex} of $\mathbf{M}$. The apex is an invariant of transitive birepresentations.
 
 \subsection{Biideals}
  {\it A two-sided biideal} $\mathcal{I}$ in a $\Bbbk$-linear bicategory $\csym{C}$ consists of a collection of ideals $(\mathcal{I}_{\mathtt{j,k}})_{\mathtt{j,k} \in \on{Ob}\ccf{C}}$ of $\csym{C}(\mathtt{j,k})$, such that, for any $1$-morphisms $\mathrm{F}$ of $\csym{C}(\mathtt{i,j})$ and $\mathrm{G}$ of $\csym{C}(\mathtt{k,l})$, and any $2$-morphism $\gamma \in \mathcal{I}_{\mathtt{j,k}}$, we have
 \[
  \mathrm{G} \bullet \gamma \in \mathcal{I}_{\mathtt{j,l}}, \; \gamma \bullet \mathrm{F} \in \mathcal{I}_{\mathtt{i,k}}.
 \]
  As a consequence, given any $2$-morphisms $\alpha$ of $\csym{C}(\mathtt{i,j})$ and $\beta$ of $\csym{C}(\mathtt{k,l})$, we have
  \[
   \beta \hcomp \gamma \in \mathcal{I}_{\mathtt{j,l}}, \; \gamma \hcomp \alpha \in \mathcal{I}_{\mathtt{i,k}}
  \]
 due to
 \[
   \beta \hcomp \gamma = (\beta \hcomp \on{id}_{\on{cod}\gamma}) \vcomp (\on{id}_{\on{dom}\beta} \hcomp \gamma)
 \]
 and similarly for $\alpha$. Given a biideal $\mathcal{I}$, the assignment $(\mathcal{I}^{2})_{\mathtt{k,l}} := (\mathcal{I}_{\mathtt{k,l}})^{2}$ defines a biideal: given $\beta, \alpha \in \mathcal{I}_{\mathtt{i,j}}$ such that $\beta \vcomp \alpha$ is defined, and $\gamma$ in $\csym{C}(\mathtt{j,k})$, we have
 \[
  \gamma \hcomp (\beta \vcomp \alpha) = (\gamma \hcomp \beta) \vcomp (\gamma \hcomp \alpha).
 \]
 But $\gamma \hcomp \beta, \gamma \hcomp \alpha \in \mathcal{I}_{\mathtt{i,k}}$, and so $\gamma \hcomp (\beta \vcomp \alpha) \in \mathcal{I}_{\mathtt{i,k}}^{2}$ by definition. In particular,
\begin{equation}\label{Ih2}
 \mathcal{I}_{\mathtt{i,k}}^{h,2} := \Bbbk \setj{ \alpha \in \mathcal{I}_{\mathtt{i,k}} \; | \; \alpha = \gamma \hcomp \beta, \text{ for } \mathtt{j} \in \on{Ob}\csym{C}, \; \beta \in \mathcal{I}_{\mathtt{i,j}}, \gamma \in \mathcal{I}_{\mathtt{j,k}}}
 \subseteq \mathcal{I}_{\mathtt{i,k}}^{2}.
\end{equation}
 Inductively we may define $\mathcal{I}^{m}$, for any $m \in \mathbb{Z}_{> 0}$. We say that $\mathcal{I}$ is {\it nilpotent} if there is $m$ such that $\mathcal{I}^{m}_{\mathtt{i,j}} = 0$, for all $\mathtt{i,j} \in \csym{C}$.
 
 Assume now that $\csym{C}$ is finitary, let $\mathcal{I}$ be a two-sided biideal of $\csym{C}$, let $\mathbf{M}$ be a finitary birepresentation of $\csym{C}$ and let $\mathfrak{f}_{\mathrm{H},\mathrm{G}}: \mathbf{M}\mathrm{H}\mathbf{M}\mathrm{G} \xiso \mathbf{M}(\mathrm{HG})$ be the structure $2$-morphisms of $\mathbf{M}$. The ideal $\on{ev}_{\mathbf{M}}(\mathcal{I})$ of $\mathbf{M}$ is defined by letting $\on{ev}_{\mathbf{M}}(\mathcal{I})(\mathtt{i})$ consist of $\Bbbk$-linear combinations of morphisms of the form
 \[
  \begin{tikzcd}
   W \arrow[r, "f"] & \mathbf{M}\mathrm{F}(X) \arrow[r, "\mathbf{M}\mathrm{F}(g)"] \arrow[d, "(\mathbf{M}\alpha)_{X}"] & \mathbf{M}\mathrm{F}(Y) \arrow[d, "(\mathbf{M}\alpha)_{Y}"] \\
   & \mathbf{M}\mathrm{G}(X) \arrow[r, "\mathbf{M}\mathrm{G}(g)"] & \mathbf{M}\mathrm{G}(Y) \arrow[r, "h"] & Z
  \end{tikzcd}
 \]
where $f,h$ are arbitrary morphisms of $\mathbf{M}(\mathtt{i})$, whereas the $1$-morphisms $\mathrm{F,G}$, the $2$-morphism $\alpha$ and the morphism $g$ are only required to be such that all the compositions and evaluations in the diagram are well-defined. It is clear that this collection is stable under vertical composition, since $f,h$ are arbitrary. To see that it is also closed under horizontal composition, let $\mathrm{H}$ be such that the compositions $\mathrm{HG},\mathrm{HF}$ are defined. 

Applying $\mathbf{M}\mathrm{H}$ on the defining diagram above, using the associators, we obtain:
\[
 \begin{tikzcd}[scale cd = 0.9, sep = 3.7ex]
  \mathbf{M}\mathrm{H}(W) \arrow[r,color =red, "\mathbf{M}\mathrm{H}(f)"] & \mathbf{M}\mathrm{H}\mathbf{M}\mathrm{F}(X) \arrow[rdd, color=red, "(\mathfrak{f}_{\mathrm{H,F}})_{X}"] \arrow[rrr, "\mathbf{M}\mathrm{H}\mathbf{M}\mathrm{F}(g)"] \arrow[ddd, swap, "(\mathbf{M}\mathrm{H}\bullet\mathbf{M}\alpha)_{X}"] & & & \mathbf{M}\mathrm{H}\mathbf{M}\mathrm{F}(Y) \arrow[ddd, near start, swap, "(\mathbf{M}\mathrm{H}\bullet \mathbf{M}\alpha)_{Y}"] \arrow[rdd, "(\mathfrak{f}_{\mathrm{H,F}})_{Y}"] \\
  \\
  && \mathbf{M}(\mathrm{HF})(X) & & & \mathbf{M}(\mathrm{HF})(Y) \arrow[from=lll, color = red, crossing over, near start, "\mathbf{M}(\mathrm{HF})(g)"] \arrow[ddd, color = red, crossing over, near end, "(\mathbf{M}(\mathrm{H}\bullet \alpha))_{Y}"] \\
  &\mathbf{M}\mathrm{H}\mathbf{M}\mathrm{G}(X) \arrow[rrr, near end, "\mathbf{M}\mathrm{H}\mathbf{M}\mathrm{G}(g)"] \arrow[rdd, near start, swap, "(\mathfrak{f}_{\mathrm{H,G}})_{X}"] & & & \mathbf{M}\mathrm{H}\mathbf{M}\mathrm{G}(Y) \arrow[from = rdd, color = red, near end, "(\mathfrak{f}_{\mathrm{H,G}}^{-1})_{Y}"] \arrow[rr, color = red, "\mathbf{M}\mathrm{H}(h)"] & &[-3.7em] \mathbf{M}\mathrm{H}(Z)  \\ \\
  && \mathbf{M}(\mathrm{HG})(X) \arrow[from=uuu, color = red, crossing over, near end, "(\mathbf{M}(\mathrm{H}\bullet \alpha))_{X}"] \arrow[rrr, color = red, "\mathbf{M}(\mathrm{HG})(g)"] & & & \mathbf{M}(\mathrm{HG})(Y)
 \end{tikzcd}
 \]
 The cube in the diagram commutes: its top and bottom faces commute by the naturality of $\mathfrak{f}_{\mathrm{H,F}}$, left and right by the naturality of $\mathfrak{f}_{\mathrm{H},-}$, front by the naturality of $\mathbf{M}(\mathrm{H}\bullet \alpha)$, and back by the naturality of $\mathbf{M}\mathrm{H}\bullet \mathbf{M}\alpha$. The red part of the diagram indicates that the resulting morphism still belongs to $\on{ev}_{\mathbf{M}}(\mathcal{I})$. Further, it shows that, if we modify the definition of $\on{ev}_{\mathbf{M}}(\mathcal{I})$, so that instead of letting $\alpha$ be arbitrary, we require $\alpha  = \gamma \hcomp \beta$ as in \eqref{Ih2}, we again obtain an ideal of $\mathbf{M}$, which we denote by $\on{ev}_{\mathbf{M}}(\mathcal{I}^{h,2})$, in view of the notation in \eqref{Ih2}. Inductively, we may define further ideals $\on{ev}_{\mathbf{M}}(\mathcal{I}^{h,k})$, for $k \in \mathbb{Z}_{> 0}$. The above diagram also proves that $\on{ev}_{\mathbf{M}}(\mathcal{I}^{h,2})$ is the ideal generated by morphisms of the form $\mathbf{M}\mathrm{H}(f)$, for $f \in \on{ev}_{\mathbf{M}}(\mathcal{I})$. If we express this by the suggestive notation
 \[
  \on{ev}_{\mathbf{M}}(\mathcal{I}^{h,2}) = \csym{C}\on{ev}_{\mathbf{M}}(\mathcal{I}),
 \]
 then, setting $\mathcal{I}^{h,1} := \mathcal{I}$, inductively we also have
 \begin{equation}\label{hinduction}
  \on{ev}_{\mathbf{M}}(\mathcal{I}^{h,k+1}) = \csym{C}\on{ev}_{\mathbf{M}}(\mathcal{I}^{h,k}) \text{, for } k \geq 1.
 \end{equation}
 \begin{lemma}\label{NilpotentAnn}
  If $\mathbf{M}$ is a simple transitive birepresentation of $\csym{C}$ and $\mathcal{I}$ is a nilpotent ideal of $\csym{C}$, then $\on{ev}_{\mathbf{M}}(\mathcal{I}) = 0$. In other words, $\mathcal{I}$ annihilates $\mathbf{M}$.
 \end{lemma}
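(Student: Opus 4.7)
My plan is to argue by contradiction, tracking the chain of ideals $\on{ev}_{\mathbf{M}}(\mathcal{I}^{h,k})$ for $k \geq 1$. The starting observation, already available from the discussion preceding the lemma, is that each $\on{ev}_{\mathbf{M}}(\mathcal{I}^{h,k})$ is a $\csym{C}$-stable ideal of $\mathbf{M}$. By simple transitivity, each such ideal is either zero or not proper; combined with transitivity and the usual summand argument (an identity of a direct summand factors through the identity of the ambient object), a non-zero $\csym{C}$-stable ideal in a simple transitive birepresentation must contain $\on{id}_{X}$ for every object $X$ of every $\mathbf{M}(\mathtt{i})$.

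First I would handle the tail of the chain. The interchange-law identity
\[
 \gamma \hcomp \beta = (\gamma \bullet \on{cod}(\beta)) \vcomp (\on{dom}(\gamma) \bullet \beta),
\]
together with the biideal property of $\mathcal{I}$, shows that each generator of $\mathcal{I}^{h,2}$ lies in $\mathcal{I}^{2}$. Iterating, one obtains $\mathcal{I}^{h,k} \subseteq \mathcal{I}^{k}$ for every $k$, so nilpotency $\mathcal{I}^{m} = 0$ yields $\on{ev}_{\mathbf{M}}(\mathcal{I}^{h,m}) = 0$ for some $m$.

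The heart of the argument is then to propagate non-vanishing in the opposite direction. Supposing $\on{ev}_{\mathbf{M}}(\mathcal{I}) \neq 0$, I would show inductively that $\on{id}_{X} \in \on{ev}_{\mathbf{M}}(\mathcal{I}^{h,k})$ for every object $X$ of $\mathbf{M}$ and every $k \geq 1$. The base case $k = 1$ is the opening observation. For the inductive step, the key input \eqref{hinduction} gives
\[
 \on{id}_{\mathbf{M}\mathrm{H}(X)} = \mathbf{M}\mathrm{H}(\on{id}_{X}) \in \csym{C}\on{ev}_{\mathbf{M}}(\mathcal{I}^{h,k}) = \on{ev}_{\mathbf{M}}(\mathcal{I}^{h,k+1})
\]
for every $1$-morphism $\mathrm{H}$, and the summand argument (via transitivity: any $Y$ is a direct summand of some $\mathbf{M}\mathrm{H}(X)$) then promotes this to $\on{id}_{Y} \in \on{ev}_{\mathbf{M}}(\mathcal{I}^{h,k+1})$ for every object $Y$ of $\mathbf{M}$. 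Specializing to $k = m$ contradicts the conclusion of the previous paragraph.

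The main obstacle is the inductive formula \eqref{hinduction}, established in the preceding discussion via the commuting-cube analysis; granting this, the rest is a fairly standard interplay between $\csym{C}$-stability (for the summand argument), simple transitivity (which forces the dichotomy zero-or-full), and the biideal calculus (bounding horizontal powers by vertical powers).
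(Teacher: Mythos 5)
Your argument is correct and takes essentially the same route as the paper's proof: both derive a contradiction by using simple transitivity to force $\on{ev}_{\mathbf{M}}(\mathcal{I}^{h,k}) = \mathbf{M}$ for all $k$ via \eqref{hinduction}, while nilpotency (through $\mathcal{I}^{h,m} \subseteq \mathcal{I}^{m} = 0$) forces $\on{ev}_{\mathbf{M}}(\mathcal{I}^{h,m}) = 0$. Your identity-morphism/summand elaboration and the interchange-law bound simply spell out the paper's terser steps ``$\csym{C}\mathbf{M} = \mathbf{M}$'' and ``similarly to \eqref{Ih2}''.
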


 \begin{proof}
  Since $\mathbf{M}$ is simple transitive, we have $\on{ev}_{\mathbf{M}}(\mathcal{I}) = 0$ or $\on{ev}_{\mathbf{M}}(\mathcal{I}) = \mathbf{M}$. Using \eqref{hinduction} and the notation used in the paragraphs preceding it, the latter case would imply
  \[
   \on{ev}_{\mathbf{M}}(\mathcal{I}^{h,2}) = \csym{C}\on{ev}_{\mathbf{M}}(\mathcal{I}) = \csym{C}\mathbf{M} = \mathbf{M}
  \]
 and, inductively
 \[
  \on{ev}_{\mathbf{M}}(\mathcal{I}^{h,k}) = \mathbf{M}, \text{ for all } k \geq 1.
 \]
 Let $m$ be such that $\mathcal{I}^{m} = 0$.  Similarly to \eqref{Ih2}, $\mathcal{I}^{m} = 0$ implies $\mathcal{I}^{h,m} = 0$. This yields
 \[
  \on{ev}_{\mathbf{M}}(\mathcal{I}^{h,m}) = 0,
 \]
 which contradicts the earlier conclusion, thus showing that $\on{ev}_{\mathbf{M}}(\mathcal{I}) = 0$.
 \end{proof}

 \begin{lemma}\label{RadNilp}
  Let $\mathcal{I}$ be a biideal of $\csym{C}$ not containing any identity $2$-morphisms. Then $\mathcal{I}$ is nilpotent.
 \end{lemma}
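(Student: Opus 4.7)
The plan is to reduce the statement to the classical fact that the Jacobson radical of a finite-dimensional algebra is nilpotent. Concretely, I would first show that, for each pair of objects $\mathtt{i},\mathtt{j}$, the ideal $\mathcal{I}_{\mathtt{i},\mathtt{j}}$ is contained in the Jacobson radical of the hom-category $\csym{C}(\mathtt{i},\mathtt{j})$, and then pick a uniform nilpotency exponent across the (finite) collection of such pairs.

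For the radical containment, finitarity furnishes an equivalence $\csym{C}(\mathtt{i},\mathtt{j}) \simeq A_{\mathtt{i},\mathtt{j}}\!\on{-proj}$ with $A_{\mathtt{i},\mathtt{j}}$ a finite-dimensional $\Bbbk$-algebra; in particular $\csym{C}(\mathtt{i},\mathtt{j})$ is Krull--Schmidt with finite-dimensional hom-spaces, and the endomorphism ring of every indecomposable $1$-morphism is local. Given $f \in \mathcal{I}_{\mathtt{i},\mathtt{j}}(\mathrm{F},\mathrm{G})$, I would fix Krull--Schmidt decompositions $\mathrm{F} = \bigoplus_{k} \mathrm{F}_k$ and $\mathrm{G} = \bigoplus_{l} \mathrm{G}_l$ into indecomposables and denote by $f_{lk} \colon \mathrm{F}_k \to \mathrm{G}_l$ the matrix components of $f$. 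Because $\mathcal{I}_{\mathtt{i},\mathtt{j}}$ is an ordinary two-sided ideal in $\csym{C}(\mathtt{i},\mathtt{j})$, pre- and post-composing $f$ with the canonical inclusions and projections of the two direct sums shows that each $f_{lk}$ already lies in $\mathcal{I}_{\mathtt{i},\mathtt{j}}$. If some $f_{lk}$ were invertible, then $f_{lk}^{-1} \circ f_{lk} = \on{id}_{\mathrm{F}_k}$ would belong to $\mathcal{I}$, contradicting the hypothesis. Hence every block $f_{lk}$ is a non-isomorphism between indecomposables, which by the standard description of the radical in a Krull--Schmidt category (non-invertible at every indecomposable block) places $f$ in $\on{rad}(\csym{C}(\mathtt{i},\mathtt{j}))$.

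For the uniform exponent, the equivalence above identifies $\on{rad}(\csym{C}(\mathtt{i},\mathtt{j}))$ with the subspace $e\,\on{rad}(A_{\mathtt{i},\mathtt{j}})\,e'$ of morphisms between projectives cut out by idempotents; composition in the category matches multiplication in $A_{\mathtt{i},\mathtt{j}}$, and nilpotency of $\on{rad}(A_{\mathtt{i},\mathtt{j}})$ yields an integer $m_{\mathtt{i},\mathtt{j}}$ with $\on{rad}(\csym{C}(\mathtt{i},\mathtt{j}))^{m_{\mathtt{i},\mathtt{j}}} = 0$. Setting $m := \max_{\mathtt{i},\mathtt{j}} m_{\mathtt{i},\mathtt{j}}$, finite under the standing convention that finitary bicategories have finitely many objects, one obtains $\mathcal{I}_{\mathtt{i},\mathtt{j}}^{m} = 0$ uniformly. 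The only mildly non-routine step is the radical containment, and that is where the hypothesis ``no identity $2$-morphism'' is actually used; the horizontal composition of $\csym{C}$ plays no role at all, since $\mathcal{I}^{m}$ is defined purely by vertical composition inside each hom-category and hence no interaction with associators or unitors arises.
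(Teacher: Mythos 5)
Your argument is correct and follows the same route as the paper: show $\mathcal{I}_{\mathtt{i,j}} \subseteq \on{Rad}\csym{C}(\mathtt{i,j})$ for all $\mathtt{i,j}$, then use nilpotency of each radical together with finiteness of $\on{Ob}\csym{C}$ to extract a uniform exponent, noting that the powers $\mathcal{I}^{m}$ only involve vertical composition. The only difference is that you spell out the radical containment (via Krull--Schmidt block decomposition and the observation that an invertible block would force an identity $2$-morphism into $\mathcal{I}$), which the paper asserts without detail.
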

 
 \begin{proof}
  Since $\csym{C}(\mathtt{i,j})$ is finitary and $\mathcal{I}_{\mathtt{i,j}}$ contains no identity $2$-morphisms, we have 
  \[
   \mathcal{I}_{\mathtt{i,j}} \subseteq \on{Rad}\csym{C}(\mathtt{i,j}) \text{, for all } \mathtt{i,j}.
  \]
  Given $\mathtt{i,j} \in \on{Ob}\csym{C}$, let $n_{\mathtt{i,j}}$ be the nilpotency degree of $\on{Rad}\csym{C}(\mathtt{i,j})$. Since $\on{Ob}\csym{C}$ is finite, we may let $n := \max\setj{n_{\mathtt{i,j}} \; | \; \mathtt{i,j} \in \on{Ob}\csym{C}}$. Clearly then $\mathcal{I}_{\mathtt{i,j}}^{n} = 0$, for all $\mathtt{i,j} \in \on{Ob}\csym{C}$, and hence $\mathcal{I}^{n} = 0$.
 \end{proof}

\section{Weighted colimits of 2-representations}\label{s3}

\subsection{Weighted colimits of \texorpdfstring{$\Bbbk$-linear}{k-linear} pseudofunctors}
We first recall the notion of a bicategorical weighted colimit. The simplified terminology we use here replaces the more precise terminology of pseudo-, bi- and lax colimits, which we will not need since we only use elementary bicategorical notions. Our choice of terminology here is the same as that in \cite{Br}, and it is consistent with the notational conventions we made before.
\begin{definition}\label{WeightedColimDef}
 Let $\csym{B}$ be a bicategory. Let $\csym{J}$ be a small bicategory and let $\mathbf{W}$ be a pseudofunctor from $\csym{J}^{\on{op}}$ to $\mathbf{Cat}$.
 Given a pseudofunctor $\mathbf{F}$ from $\csym{J}$ to $\csym{B}$, a {\it weighted bicategorical colimit} $\mathbf{W} \star \mathbf{F}$ is an object of $\csym{B}$ together with a representation of the pseudofunctor
 \[
  [\csym{J}^{\on{op}},\mathbf{Cat}]\left(\mathbf{W},\csym{B}\left(\mathbf{F}-,\mathtt{i}\right)\right)
 \]
 in $\mathtt{i}$. In other words, there are equivalences of categories
 \[
  \csym{B}\left(\mathbf{W}\star \mathbf{F},\mathtt{i}\right) \simeq [\csym{J}^{\on{op}},\mathbf{Cat}]\left(\mathbf{W},\csym{B}\left(\mathbf{F}-,\mathtt{i}\right)\right)
 \]
strongly natural in $\mathtt{i}$. If $\mathbf{W}\star \mathbf{F}$ exists, it is unique up to a compatible internal equivalence in $\csym{B}$. If $\mathbf{W} \star \mathbf{F}$ exists for all choices of $\csym{J},\mathbf{W},\mathbf{F}$, we say that $\csym{B}$ is {\it bicategorically cocomplete.}

If we instead have a $2$-category $\csym{C}$, a small $2$-category $\csym{J}$ and $2$-functors $\mathbf{W},\mathbf{F}$, the {\it weighted $2$-categorical colimit} $\mathbf{W} \star \mathbf{F}$ is an object of $\csym{C}$ representing the $2$-functor
 \[
  [\csym{J}^{\on{op}},\mathbf{Cat}]^{\on{str}}\left(\mathbf{W},\csym{B}\left(\mathbf{F}-,\mathtt{i}\right)\right),
 \]
giving rise to isomorphisms of categories
 \[
  \csym{C}\left(\mathbf{W}\star \mathbf{F},\mathtt{i}\right) \simeq [\csym{J}^{\on{op}},\mathbf{Cat}]^{\on{str}}\left(\mathbf{W},\csym{C}\left(\mathbf{F}-,\mathtt{i}\right)\right)
 \]
 $2$-natural in $\mathtt{i}$. It is then unique up to isomorphism in $\csym{C}$, and if it always exists, we say that $\csym{C}$ is {\it $2$-categorically cocomplete.}
 
 Bicategorical and $2$-categorical limits are obtained as bicategorical and $2$-categorical colimits in $\csym{B}^{\on{op}}$ and $\csym{C}^{\on{op}}$.
\end{definition}

 \begin{proposition}\label{Cocomplete}
 The $2$-category $\mathbf{Cat}_{\Bbbk}$ is complete and cocomplete, both bicategorically and $2$-categorically.
 \end{proposition}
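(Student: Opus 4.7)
The plan is to reduce to two standard building blocks—small conical (co)limits and (co)tensors (i.e.\ copowers and powers) with objects of $\mathbf{Cat}$—and to bootstrap the bicategorical case from the $2$-categorical one via flexibility.

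First I would handle the $2$-categorical case. Identifying $\mathbf{Cat}_{\Bbbk}$ with the $\mathbf{Cat}$-enriched category of small $\mathbf{Vect}_{\Bbbk}$-enriched categories, I would exhibit the required building blocks explicitly: products and coproducts are the expected componentwise/disjoint constructions, equalizers are wide subcategories on which the two parallel functors agree, and coequalizers are obtained by quotienting by the smallest $\Bbbk$-linear congruence identifying the parallel pair. For $\mathcal{C} \in \mathbf{Cat}$ and $\mathcal{A} \in \mathbf{Cat}_{\Bbbk}$ the copower is $\Bbbk[\mathcal{C}] \otimes_{\Bbbk} \mathcal{A}$, where $\Bbbk[-]$ denotes free $\Bbbk$-linearization, and the power is $\mathbf{Cat}_{\Bbbk}(\Bbbk[\mathcal{C}],\mathcal{A})$; that these realize the enriched universal properties follows from the closed structure on $\mathbf{Cat}_{\Bbbk}$ noted earlier together with the adjunction between $\Bbbk[-]$ and the forgetful functor. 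Kelly's standard reduction of an arbitrary $\mathbf{Cat}$-weighted $2$-(co)limit to a conical (co)limit of a diagram of (co)powers then yields all $\mathbf{Cat}$-weighted $2$-(co)limits in $\mathbf{Cat}_{\Bbbk}$.

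For the bicategorical case, since $\mathbf{Cat}_{\Bbbk}$ is itself strict, I would invoke a flexibility/coherence argument: after using coherence to strictify the indexing bicategory $\csym{J}$ to a biequivalent $2$-category, I would replace the weight $\mathbf{W}\colon \csym{J}^{\on{op}}\to \mathbf{Cat}$ by a flexible (cofibrant) weight $Q\mathbf{W}$ for which
\[
 [\csym{J}^{\on{op}},\mathbf{Cat}]^{\on{str}}(Q\mathbf{W},\mathbf{X}) \simeq [\csym{J}^{\on{op}},\mathbf{Cat}](\mathbf{W},\mathbf{X})
\]
holds naturally in $\mathbf{X}$. The $2$-colimit $Q\mathbf{W}\star \mathbf{F}$, whose existence is guaranteed by the previous step, then automatically represents $[\csym{J}^{\on{op}},\mathbf{Cat}](\mathbf{W},\mathbf{Cat}_{\Bbbk}(\mathbf{F}-,\mathtt{i}))$, which is precisely the bicategorical universal property defining $\mathbf{W}\star \mathbf{F}$. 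Bicategorical limits are obtained dually.

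The main obstacle is the flexibility step: one must establish the displayed equivalence in a fashion sufficiently natural in $\mathbf{X}$ to transfer the $2$-categorical universal property of $Q\mathbf{W}\star \mathbf{F}$ to a bicategorical universal property, and this rests on the coherence theorems for bicategories, pseudofunctors and pseudonatural transformations. Rather than redeveloping this machinery, I would cite the relevant results from the weighted bicategorical colimits literature, including the reference \cite{Br} already used in this section.
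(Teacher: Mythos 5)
Your argument is correct in outline, but it follows a genuinely different route from the paper. The paper disposes of cocompleteness by citing the explicit construction of coproducts, coinserters and coequifiers in $\mathbf{Cat}_{\Bbbk}$ (\cite[Proposition 2.6]{Br}), these being a generating class for all bicolimits, and of completeness by checking that the constructions of products, cotensors, $2$-equalizers and pseudoequalizers in $\mathbf{Cat}$ from \cite{Ca} lift along the forgetful $2$-functor $\mathbf{Cat}_{\Bbbk} \rightarrow \mathbf{Cat}$ (a $2$-categorical right adjoint), invoking \cite[1.24]{St}; in other words, it works directly with a bi(co)limit-generating class of weights and never needs full strict enriched (co)completeness. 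You instead first establish $2$-categorical (co)completeness via Kelly's reduction to conical (co)limits plus (co)powers, with the copower $\Bbbk[\mathcal{C}] \otimes_{\Bbbk} \mathcal{A}$ and power $\mathbf{Cat}_{\Bbbk}(\Bbbk[\mathcal{C}],\mathcal{A})$ justified exactly as in the paper's closed-structure discussion, and then transfer to the bicategorical statement by strictification and flexible (cofibrant) replacement of the weight. What your route buys is a self-contained treatment of the $2$-categorical half of the statement and a clean conceptual explanation of why strict (co)completeness implies bicategorical (co)completeness; what it costs is reliance on the coherence/flexibility machinery, which the paper's choice of references avoids. Two small caveats: your explicit description of coequalizers is not quite right as stated (one must also identify objects, and morphisms of the quotient are generated by newly composable strings, not merely obtained as a hom-congruence; for your purposes mere existence, e.g.\ via local presentability of $\mathbf{Cat}_{\Bbbk}$, suffices), and in the bicategorical step you should also strictify the diagram $\mathbf{F}$ itself, not only the indexing bicategory $\csym{J}$ and the weight $\mathbf{W}$ --- both points are standard and covered by the literature you propose to cite.
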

 \begin{proof}
 Cocompleteness can be shown by explicitly constructing certain colimits which can be used to obtain all colimits. A proof given by construction of coproducts, coinserters and coequifiers can be found in \cite[Proposition 2.6]{Br}. 
 For completeness, one easily verifies that the explicit constructions of products, cotensors, $2$-equalizers and pseudoequalizers in $\mathbf{Cat}$ given in \cite{Ca} apply also in the case of $\mathbf{Cat}_{\Bbbk}$ - this can be viewed as a consequence of preservation of limits by the forgetful $2$-functor $\mathbf{Cat}_{\Bbbk} \rightarrow \mathbf{Cat}$, which is $2$-categorically right adjoint to the free $\Bbbk$-linear category $2$-functor. Completeness follows as a consequence of \cite[1.24]{St}.
 \end{proof}

\begin{definition}
 Let $\csym{B}$ be a $\Bbbk$-linear bicategory. Let $\csym{J}$ be a small $\Bbbk$-linear bicategory and let $\mathbf{W}$ be a $\Bbbk$-linear pseudofunctor from $\csym{J}^{\on{op}}$ to $\mathbf{Cat}_{\Bbbk}$. Given a $\Bbbk$-linear pseudofunctor $\mathbf{F}: \csym{J} \rightarrow \csym{B}$, a {\it weighted $\Bbbk$-linear bicategorical colimit $\mathbf{W}\star \mathbf{F}$} is an object of $\csym{B}$ together with a representation of the $\Bbbk$-linear pseudofunctor
 \[
  [\csym{J}^{\on{op}},\mathbf{Cat}_{\Bbbk}]_{\Bbbk}(\mathbf{W},\csym{B}(\mathbf{F}-,\mathtt{i}))
 \]
 in $\mathtt{i}$. In other words, there are $\Bbbk$-linear equivalences of categories
 \[
  \csym{B}(\mathbf{W}\star \mathbf{F}, \mathtt{i}) \simeq [\csym{J}^{\on{op}},\mathbf{Cat}_{\Bbbk}]_{\Bbbk}(\mathbf{W},\csym{B}(\mathbf{F}-,\mathtt{i})).
 \]
 Similarly one obtains the notion of a $2$-categorical $\Bbbk$-linear colimit, of $\Bbbk$-linear bicategorical and $2$-categorical limits and the resulting notions of $\Bbbk$-linear bicategorical and $2$-categorical cocompleteness and completeness. 
\end{definition}

The next statement can be viewed as a direct corollary of \cite[Proposition 11.2]{GS}, however, since that paper is written in a much more general setting, we give an explanation of how one views our particular case as an instance of the setting of \cite{GS}, and how one uses the results therein to obtain the statement.

\begin{proposition}
 Let $\csym{B}$ be a small $\Bbbk$-linear bicategory. The $2$-category $[\csym{B},\mathbf{Cat}_{\Bbbk}]_{\Bbbk}$ of $\Bbbk$-linear pseudofunctors from $\csym{B}$ to $\mathbf{Cat}_{\Bbbk}$ is $\Bbbk$-linear cocomplete, and the colimits are computed pointwise. In other words, given a diagram $\mathbf{F}: \csym{J} \rightarrow [\csym{B},\mathbf{Cat}_{\Bbbk}]_{\Bbbk}$, a weight $\mathbf{W}$ in $[\csym{J},\mathbf{Cat}_{\Bbbk}]_{\Bbbk}$, and an object $\mathtt{b} \in \csym{B}$, the $\Bbbk$-linear category $(\mathbf{W}\star \mathbf{F})(\mathtt{b})$ is the colimit $\mathbf{W}\star \mathbf{F}_{\mathtt{b}}$, where $\mathbf{F}_{\mathtt{b}}$ is obtained by evaluating at $\mathtt{b}$ the $\Bbbk$-linear pseudofunctor $\mathbf{F}^{\zz}$ obtained by
 \[
  \begin{aligned}
   [\csym{J},[\csym{B},\mathbf{Cat}_{\Bbbk}]_{\Bbbk}]]_{\Bbbk} &\xiso [\csym{B},[\csym{J},\mathbf{Cat}_{\Bbbk}]_{\Bbbk}]]_{\Bbbk} \\
   \mathbf{F} &\mapsto \mathbf{F}^{\zz}
  \end{aligned}
 \]
\end{proposition}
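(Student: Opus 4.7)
The strategy is the standard \emph{pointwise colimits in functor categories} argument, transplanted to the $\Bbbk$-linear bicategorical setting; the crucial inputs are the cocompleteness of $\mathbf{Cat}_{\Bbbk}$ from Proposition~\ref{Cocomplete} together with the transposition biequivalence
\[
 [\csym{J},[\csym{B},\mathbf{Cat}_{\Bbbk}]_{\Bbbk}]_{\Bbbk} \xiso [\csym{B},[\csym{J},\mathbf{Cat}_{\Bbbk}]_{\Bbbk}]_{\Bbbk}
\]
recorded in the preliminaries. Given a diagram $\mathbf{F}$ and a weight $\mathbf{W}$, I would transpose $\mathbf{F}$ to $\mathbf{F}^{\zz}$, and, for each $\mathtt{b}\in\on{Ob}\csym{B}$, define
\[
(\mathbf{W}\star \mathbf{F})(\mathtt{b}) := \mathbf{W}\star \mathbf{F}_{\mathtt{b}},
\]
which exists in $\mathbf{Cat}_{\Bbbk}$ by Proposition~\ref{Cocomplete}.

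To promote this object-wise assignment to a $\Bbbk$-linear pseudofunctor $\csym{B} \to \mathbf{Cat}_{\Bbbk}$, I would invoke the universal property of each colimit $\mathbf{W}\star \mathbf{F}_{\mathtt{b}}$: a $1$-morphism $\mathrm{H}\in\csym{B}(\mathtt{b},\mathtt{b}')$ yields, via $\mathbf{F}^{\zz}$, a strong transformation $\mathbf{F}_{\mathtt{b}}\Rightarrow \mathbf{F}_{\mathtt{b}'}$, and hence a canonical $\Bbbk$-linear functor $(\mathbf{W}\star \mathbf{F})(\mathtt{b})\rightarrow (\mathbf{W}\star \mathbf{F})(\mathtt{b}')$. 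The unitor and compositor $2$-isomorphisms of this assignment are obtained by feeding the pseudofunctoriality constraints of $\mathbf{F}^{\zz}$ into the universal property, and the pseudofunctor axioms reduce to equalities of two induced maps into a colimit, both arising from the same weighted cocone; the uniqueness clause of the universal property forces them to agree.

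It remains to verify the universal property: for $\mathbf{G}\in[\csym{B},\mathbf{Cat}_{\Bbbk}]_{\Bbbk}$, we need $\Bbbk$-linear equivalences
\[
 [\csym{B},\mathbf{Cat}_{\Bbbk}]_{\Bbbk}(\mathbf{W}\star \mathbf{F}, \mathbf{G}) \simeq [\csym{J}^{\on{op}},\mathbf{Cat}_{\Bbbk}]_{\Bbbk}\bigl(\mathbf{W}, [\csym{B},\mathbf{Cat}_{\Bbbk}]_{\Bbbk}(\mathbf{F}-,\mathbf{G})\bigr)
\]
strongly natural in $\mathbf{G}$. The left-hand side consists of strong transformations whose $\mathtt{b}$-component is a functor out of $\mathbf{W}\star \mathbf{F}_{\mathtt{b}}$ together with compatible coherence data; applying the pointwise universal property at each $\mathtt{b}$ rewrites this as the category of $\mathbf{W}$-weighted cocones over $\mathbf{F}_{\mathtt{b}}$ with values in $\mathbf{G}(\mathtt{b})$, varying pseudofunctorially in $\mathtt{b}$, which is precisely the right-hand side after transposing once more through the biequivalence above. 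The main obstacle I expect is the bookkeeping needed to promote the pointwise equivalences to one that is strongly natural simultaneously in $\mathtt{b}$ and $\mathbf{G}$; this reduces to checking that the coherence $2$-cells produced by the universal property intertwine correctly with the transposition isomorphisms, and is exactly what \cite[Proposition~11.2]{GS} establishes in a far more general enriched setting, from which the statement then follows as a direct specialization.
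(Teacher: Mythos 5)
Your proposal is essentially correct, but it is organized differently from the paper's proof. You sketch the classical Kelly-style ``colimits in functor categories are computed pointwise'' argument, categorified: construct $\mathbf{W}\star\mathbf{F}_{\mathtt{b}}$ in $\mathbf{Cat}_{\Bbbk}$ for each $\mathtt{b}$ using Proposition~\ref{Cocomplete}, promote the assignment to a $\Bbbk$-linear pseudofunctor via the universal properties, and then verify the weighted universal property by transposing through the biequivalence $[\csym{J},[\csym{B},\mathbf{Cat}_{\Bbbk}]_{\Bbbk}]_{\Bbbk}\simeq[\csym{B},[\csym{J},\mathbf{Cat}_{\Bbbk}]_{\Bbbk}]_{\Bbbk}$. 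If carried out in full, this gives cocompleteness and pointwiseness simultaneously, and is more self-contained, but the coherence bookkeeping you defer at the end is exactly the heavy part. The paper instead avoids that bookkeeping entirely by identifying the situation as an instance of the enriched-module framework of \cite{GS}: it checks that $\Bbbk$-linear pseudofunctors to $\mathbf{Cat}_{\Bbbk}$ are precisely $\mathbf{Cat}_{\Bbbk}$-enriched modules, that the internal hom is the hom-category of $[\csym{B},\mathbf{Cat}_{\Bbbk}]_{\Bbbk}$, and that smallness of $\csym{B}$ ensures these modules are moderate, so that \cite[Proposition~11.2]{GS} yields cocompleteness, with the colimit given by the explicit tensor product $\mathbf{W}\otimes_{\!\ccf{J}}\tilde{\mathbf{F}}$; the pointwise formula is then deduced from \cite[Corollary~6.10]{GS} together with the Yoneda lemma identifying $\tilde{\mathbf{F}}(\mathtt{b})$ with $\tilde{\mathbf{F}}_{\mathtt{b}}$. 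Two small points to fix in your write-up: the verification that the present setting really is a special case of \cite{GS} (modules, internal homs, moderateness) is not automatic and is the actual content of the paper's proof, so it should not be waved through as ``a direct specialization''; and \cite[Proposition~11.2]{GS} gives cocompleteness, not the naturality-in-$\mathtt{b}$ bookkeeping you cite it for --- the pointwise computation is what \cite[Corollary~6.10]{GS} supplies.
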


\begin{proof}
 As we have observed before, we consider the case of enriching monoidal bicategory $\mathcal{V} = \mathbf{Cat}_{\Bbbk}$. As remarked in \cite[Section 5]{GS}, it is easy to verify that, given $\Bbbk$-linear bicategories $\csym{A}$, $\csym{C}$, left $\csym{A}$-modules in the sense of \cite{GS} are $\Bbbk$-linear pseudofunctors from $\csym{A}$ to $\mathbf{Cat}_{\Bbbk}$, and similarly, $\csym{A}$-$\csym{C}$-bimodules are $\Bbbk$-linear pseudofunctors from $\csym{A} \otimes_{\Bbbk} \csym{C}^{\on{op}}$ to $\mathbf{Cat}_{\Bbbk}$. Given $\mathbf{M},\mathbf{N} \in [\csym{B}, \mathbf{Cat}_{\Bbbk}]_{\Bbbk}$, their internal hom, as studied in the general case in \cite[Section 7]{GS}, is given by the $\Bbbk$-linear category $[\csym{B},\mathbf{Cat}_{\Bbbk}]_{\Bbbk}(\mathbf{M},\mathbf{N})$, under the clear choice of the evaluation morphism $\xi$ of \cite[Section 7.3]{GS}. Combining this with the fact that $\csym{B}$ is assumed to be small, we conclude that the $\Bbbk$-linear bicategory of moderate right $\csym{B}$-modules studied in \cite[Section 11]{GS} coincides with $[\csym{B}^{\on{op}},\mathbf{Cat}_{\Bbbk}]_{\Bbbk}$. Its $\Bbbk$-linear cocompleteness follows from \cite[Proposition 11.2]{GS}. The colimit $\mathbf{W} \star \mathbf{F}$ is given by $\mathbf{W} \otimes_{\!\ccf{J}} \tilde{\mathbf{F}}$, for $\tilde{\mathbf{F}}$ and $\otimes_{\!\ccf{J}}$ as defined in \cite{GS}. Similarly, the colimit $\mathbf{W} \star \mathbf{F}_{\mathtt{b}}$ is given by $\mathbf{W} \otimes_{\!\ccf{J}} \tilde{\mathbf{F}}_{\mathtt{b}}$. Yoneda Lemma implies that $\tilde{\mathbf{F}}(\mathtt{b}) \simeq \tilde{\mathbf{F}}_\mathtt{b}$. Taking all this into account, one sees that the result follows from \cite[Corollary 6.10]{GS}.
\end{proof}

Using the categorification of \cite[3.8]{Ke1} in \cite[13.14]{GS}, we conclude that ordinary bicategorical colimits in a $\Bbbk$-linear bicategory $\csym{B}$ can be viewed as special cases of $\Bbbk$-linear bicategorical colimits. We remark that in our case, the monoidal bicategorical adjunction obtained in \cite[13.14]{GS} is that given by the free $\Bbbk$-linear category $2$-functor $\mathbf{Cat} \rightarrow \mathbf{Cat}_{\Bbbk}$ and the forgetful $2$-functor. As a consequence of the last two observations, together with Proposition \ref{Cocomplete}, we find the following:
\begin{proposition}\label{PseudoFCoco}
 Let $\csym{B}$ be a $\Bbbk$-linear bicategory. The $\Bbbk$-linear pseudofunctor bicategory $[\csym{B},\mathbf{Cat}_{\Bbbk}]_{\Bbbk}$ is bicategorically complete and cocomplete, and the bicategorical limits and colimits are constructed pointwise from those in $\mathbf{Cat}_{\Bbbk}$.
\end{proposition}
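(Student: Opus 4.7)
The plan is that the statement follows essentially formally from the materials already assembled just above it. For cocompleteness, I would invoke the preceding proposition to get $\Bbbk$-linear bicategorical cocompleteness of $[\csym{B},\mathbf{Cat}_{\Bbbk}]_{\Bbbk}$ with pointwise computation, and then apply the base-change remark stated in the paragraph immediately preceding the proposition: the monoidal bicategorical adjunction between the free $\Bbbk$-linear category $2$-functor $\mathbf{Cat} \rightarrow \mathbf{Cat}_{\Bbbk}$ and the forgetful $2$-functor realises each ordinary weight $\mathbf{W}: \csym{J}^{\on{op}} \rightarrow \mathbf{Cat}$ as a $\Bbbk$-linear weight by post-composition, and the resulting ordinary bicategorical colimit coincides with its $\Bbbk$-linear counterpart. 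Since the latter is computed pointwise by the preceding proposition, so is the former, yielding the cocompleteness half of the statement.

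For completeness I would mirror the construction on the limit side, using that Proposition \ref{Cocomplete} also gives bicategorical \emph{completeness} of $\mathbf{Cat}_{\Bbbk}$. Given $\mathbf{F}: \csym{J} \rightarrow [\csym{B},\mathbf{Cat}_{\Bbbk}]_{\Bbbk}$ and a weight $\mathbf{W}$, I would set $\{\mathbf{W},\mathbf{F}\}(\mathtt{b}) := \{\mathbf{W},\mathbf{F}_{\mathtt{b}}\}$, where $\mathbf{F}_{\mathtt{b}}$ is the evaluation at $\mathtt{b}$ of the adjoint transpose $\mathbf{F}^{\zz}$ used in the preceding proposition; the limit on the right exists in $\mathbf{Cat}_{\Bbbk}$ by Proposition \ref{Cocomplete}. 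The $\Bbbk$-linear pseudofunctor structure on the assignment $\mathtt{b} \mapsto \{\mathbf{W},\mathbf{F}_{\mathtt{b}}\}$ is then induced from the $\csym{B}$-action on $\mathbf{F}^{\zz}$ via the universal property of limits, and the required weighted equivalence of categories is checked pointwise, reducing it to the defining universal property of each $\{\mathbf{W},\mathbf{F}_{\mathtt{b}}\}$ in $\mathbf{Cat}_{\Bbbk}$ together with the pointwise evaluation of hom-categories in pseudofunctor bicategories. The base-change observation of the first paragraph then transports everything from the $\Bbbk$-linear setting to the ordinary one.

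The main obstacle is not conceptual but coherence-theoretic: one must carefully assemble the structure $2$-isomorphisms witnessing that the pointwise-limit assignment really is a $\Bbbk$-linear pseudofunctor, and verify that the induced comparison is a strongly natural equivalence of the relevant hom-categories. In practice one would avoid the direct verification by invoking the appropriate dualised form of \cite[Proposition 11.2]{GS}, applied to $\csym{B}^{\on{op}}$, which simultaneously yields completeness and the pointwise formula; this is, after all, precisely how the cocomplete half of the preceding proposition was obtained.
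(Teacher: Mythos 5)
Your cocompleteness argument is the paper's own: the proposition is stated there without a separate proof, precisely as the combination of the preceding (unnumbered) proposition on $\Bbbk$-linear cocompleteness of $[\csym{B},\mathbf{Cat}_{\Bbbk}]_{\Bbbk}$ with pointwise colimits, the base-change observation via the free/forgetful monoidal bicategorical adjunction (\cite[13.14]{GS}, categorifying \cite[3.8]{Ke1}), and Proposition \ref{Cocomplete}. For the completeness half the paper is equally terse, and your primary route -- constructing the limit pointwise from the bicategorical completeness of $\mathbf{Cat}_{\Bbbk}$ guaranteed by Proposition \ref{Cocomplete}, inducing the pseudofunctor structure via the universal property, and checking the weighted-limit equivalence objectwise -- is the standard argument that the paper implicitly relies on (alternatively one can quote the limit results of \cite[Section 10]{GS}, to which the paper points immediately after the proposition). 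So in substance you reproduce the intended proof, with somewhat more detail than the paper records.

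The one genuine misstep is your proposed shortcut for completeness: the ``dualised form of \cite[Proposition 11.2]{GS} applied to $\csym{B}^{\on{op}}$'' does not yield what you want. Applying that cocompleteness result to $\csym{B}^{\on{op}}$ only gives cocompleteness of the module bicategory of the opposite variance, i.e.\ of $[\csym{B}^{\on{op}},\mathbf{Cat}_{\Bbbk}]_{\Bbbk}$, which says nothing about limits in $[\csym{B},\mathbf{Cat}_{\Bbbk}]_{\Bbbk}$. The actual dual statement would concern $[\csym{B},\mathbf{Cat}_{\Bbbk}]_{\Bbbk}^{\on{op}}$, which is (roughly) a bicategory of pseudofunctors valued in $\mathbf{Cat}_{\Bbbk}^{\on{op}}$; since $\mathbf{Cat}_{\Bbbk}^{\on{op}}$ is not the enriching base, the free-cocompletion theorem of \cite{GS} does not transport along this duality. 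So you cannot ``simultaneously'' get completeness and the pointwise formula that way; you should instead keep your direct pointwise construction (or cite the weighted-limit results of \cite[Section 10]{GS}), which is sound and is what the paper intends.
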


For much more general results regarding weighted bicategorical limits, in a much more general setting of bicategories enriched in monoidal bicategories which are not necessarily symmetric or closed, see \cite[Section 10, Section 11]{GS}.

\subsection{Additive and Karoubi envelopes}

Recall that a preadditive category is a category enriched in the category $\mathbf{Ab}$ of abelian groups. Let $\mathbf{Cat}_{\mathbb{Z}}$ denote the $2$-category of preadditive categories.

The additive envelope is the universal solution to the problem of adding direct sums to a preadditive category. Similarly, the Karoubi envelope universally makes a category idempotent split. These constructions are used, for instance, to define the category $\on{Rep}(S_{t})$ - see \cite{De}. A more detailed account is given in \cite{CW}, and a very detailed account, which will be our main reference, is given in \cite{Ri}.

Let $\mathbf{Cat}_{\mathbb{Z}}^{\oplus},\mathbf{Cat}_{\mathbb{Z}}^{\euler{K}}$ denote the $1,2$-full $2$-subcategories of $\mathbf{Cat}_{\mathbb{Z}}$, with the respective object sets being that of categories with finite direct sums and that of idempotent split categories, respectively. Let $\mathbf{Cat}_{\Bbbk}^{\oplus}, \mathbf{Cat}_{\Bbbk}^{\euler{K}}$ be the $\Bbbk$-linear variants thereof. Let $\mathbf{Cat}_{\Bbbk}^{\euler{D}}$ denote the $1,2$-full $2$-subcategory of $\mathbf{Cat}_{\Bbbk}$ given by the categories which are both additive and idempotent split. We have the following results:
\begin{itemize}
 \item \cite[2.1.6, 2.2.3]{Ri}: There are $2$-functors 
 \[
(-)^{\oplus}: \mathbf{Cat}_{\mathbb{Z}} \rightarrow \mathbf{Cat}^{\oplus}_{\mathbb{Z}} \text{ and }
 (-)^{\euler{K}}: \mathbf{Cat}_{\mathbb{Z}} \rightarrow \mathbf{Cat}_{\mathbb{Z}}^{\euler{K}}.
 \]
 \item \cite[2.1.9, 2.2.6]{Ri}: The above $2$-functors are compatible with $\Bbbk$-linear structures: restriction gives $\Bbbk$-linear $2$-functors 
 \[
  (-)^{\oplus}: \mathbf{Cat}_{\Bbbk} \rightarrow \mathbf{Cat}_{\Bbbk}^{\oplus} \text{ and } (-)^{\euler{K}}: \mathbf{Cat}_{\Bbbk} \rightarrow \mathbf{Cat}_{\Bbbk}^{\euler{K}}.
 \]
 \item \cite[Proposition 41]{Ri}: The Karoubi envelope of an additive category is additive. We thus obtain the $2$-functor $(-)^{\euler{D}} := (-)^{\euler{K}} \circ (-)^{\oplus} \in [\mathbf{Cat}_{\Bbbk}, \mathbf{Cat}_{\Bbbk}^{\euler{D}}]^{\on{str}}$.
 \item \cite[2.1.3, 2.2.2]{Ri}: Given $\mathcal{C} \in \on{Ob}\mathbf{Cat}_{\Bbbk}$, there are canonical fully faithful functors $\Phi_{\mathcal{C}}: \mathcal{C} \rightarrow \mathcal{C}^{\oplus}$ and $\Psi_{\mathcal{C}}: \mathcal{C} \rightarrow \mathcal{C}^{\euler{K}}$.
 Using the definitions of $\Psi_{\mathcal{C}}, \Phi_{\mathcal{C}}$, it is easy to verify that the following equations hold:
 \begin{equation}\label{EnvelopeNaturality}
  \Phi_{\mathcal{D}} \circ F = F^{\oplus} \circ \Phi_{\mathcal{C}} \text{ and }\Psi_{\mathcal{D}} \circ F = F^{\euler{K}} \circ \Psi_{\mathcal{C}},
 \end{equation}
 for any $\Bbbk$-linear functor $F: \mathcal{C} \rightarrow \mathcal{D}$.
 Denote the respective inclusion $2$-functors by $\mathbf{J}^{\oplus} \in [\mathbf{Cat}_{\Bbbk},\mathbf{Cat}_{\Bbbk}^{\oplus}]$ and $\mathbf{J}^{\oplus} \in [\mathbf{Cat}_{\Bbbk},\mathbf{Cat}_{\Bbbk}^{\euler{K}}]$. 
 Equation \eqref{EnvelopeNaturality} implies that we have $2$-transformations
 \[
  \Phi: \mathbb{1}_{\mathbf{Cat}_{\Bbbk}} \rightarrow \mathbf{J}^{\oplus} \circ (-)^{\oplus} \text{ and } \Psi: \mathbb{1}_{\mathbf{Cat}_{\Bbbk}} \rightarrow \mathbf{J}^{\euler{K}} \circ (-)^{\euler{K}}.
 \]
 \item \cite[Proposition 75, Theorem 113]{Ri}: Given $\mathcal{C} \in \on{Ob}\mathbf{Cat}_{\Bbbk}$ and $\mathcal{D} \in \on{Ob}\mathbf{Cat}_{\Bbbk}^{\oplus}$, the functor
 \[
  \mathbf{Cat}_{\Bbbk}(\mathcal{C}^{\oplus},\mathcal{D}) \xrightarrow{\circ \Phi_{\mathcal{C}}} \mathbf{Cat}_{\Bbbk}(\mathcal{C},\mathcal{D})
 \]
 is an equivalence. Similarly, given $\mathcal{D} \in \mathbf{Cat}_{\Bbbk}^{\euler{K}}$, the functor
 \[
  \mathbf{Cat}_{\Bbbk}(\mathcal{C}^{\euler{K}},\mathcal{D}) \xrightarrow{\circ \Psi_{\mathcal{C}}} \mathbf{Cat}_{\Bbbk}(\mathcal{C},\mathcal{D})
 \]
 is an equivalence.
\end{itemize}

Let
$\mathbf{J}^{\euler{D}}: \mathbf{Cat}_{\Bbbk} \rightarrow \mathbf{Cat}_{\Bbbk}^{\euler{D}}$ denote the indicated inclusion $2$-functor. 
Using the above listed facts about $(-)^{\oplus}, (-)^{\euler{K}}$, we make the following conclusion:
\begin{proposition}\label{Biadjunction}
 The pairs $\left( (-)^{\oplus}, \mathbf{J}^{\oplus} \right)$ and $\left( (-)^{\euler{K}}, \mathbf{J}^{\euler{K}} \right)$ are bicategorically adjoint. Composing the adjunctions, we further conclude that the $2$-functors $((-)^{\euler{D}}, \mathbf{J}^{\euler{D}})$ are bicategorically adjoint.
\end{proposition}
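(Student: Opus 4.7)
The plan is to exhibit the required equivalence of hom-categories directly from the enumerated facts about $(-)^{\oplus}$ and $(-)^{\euler{K}}$, and to verify strong naturality via equation~\eqref{EnvelopeNaturality}; the third adjunction then follows by composing the first two. Since $\mathbf{Cat}_{\Bbbk}^{\oplus}$ and $\mathbf{Cat}_{\Bbbk}^{\euler{K}}$ are $1,2$-full $2$-subcategories of $\mathbf{Cat}_{\Bbbk}$, for any $\mathcal{C}\in\mathbf{Cat}_{\Bbbk}$ and $\mathcal{D}\in\mathbf{Cat}_{\Bbbk}^{\oplus}$ one has
\[
\mathbf{Cat}_{\Bbbk}^{\oplus}(\mathcal{C}^{\oplus},\mathcal{D}) = \mathbf{Cat}_{\Bbbk}(\mathcal{C}^{\oplus},\mathbf{J}^{\oplus}\mathcal{D}),
\]
and identically in the Karoubi case. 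Hence the equivalence required by the definition of bicategorical adjointness reduces to
\[
\mathbf{Cat}_{\Bbbk}(\mathcal{C}^{\oplus},\mathbf{J}^{\oplus}\mathcal{D}) \simeq \mathbf{Cat}_{\Bbbk}(\mathcal{C},\mathbf{J}^{\oplus}\mathcal{D}),
\]
which I would take to be precomposition with $\Phi_{\mathcal{C}}$; this is an equivalence by \cite[Proposition 75]{Ri}, recorded as the last bullet in the preceding enumeration. The Karoubi case is handled identically using $\Psi_{\mathcal{C}}$ and \cite[Theorem 113]{Ri}.

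Next I would verify strong naturality in both arguments. Naturality in $\mathcal{D}$ is strict $2$-naturality, immediate from $2$-functoriality of postcomposition. For naturality in $\mathcal{C}$, given a $\Bbbk$-linear functor $f:\mathcal{C}'\to\mathcal{C}$, applying equation~\eqref{EnvelopeNaturality} with $f$ in the role of $F$ yields $\Phi_{\mathcal{C}}\circ f = f^{\oplus}\circ\Phi_{\mathcal{C}'}$, which is precisely the commutativity of the naturality square for precomposition with $\Phi$; the Karoubi case uses the analogous identity for $\Psi$. Strict $2$-naturality is in particular strong naturality with identity structure $2$-morphisms, so both pairs qualify as bicategorical adjunctions, establishing the first assertion.

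For the second assertion, bicategorical adjunctions compose: given bicategorically adjoint pairs $(\mathbf{F},\mathbf{G})$ and $(\mathbf{F}',\mathbf{G}')$ with matching domains and codomains, the pair $(\mathbf{F}'\mathbf{F},\mathbf{G}\mathbf{G}')$ is bicategorically adjoint by pasting the defining strongly natural equivalences. The fact that the Karoubi envelope of an additive category is additive (the fourth bullet above) ensures that $(-)^{\euler{K}}$ restricts to $\mathbf{Cat}_{\Bbbk}^{\oplus}\to\mathbf{Cat}_{\Bbbk}^{\euler{D}}$, yielding a reflection $\mathbf{Cat}_{\Bbbk}^{\oplus}\rightleftarrows\mathbf{Cat}_{\Bbbk}^{\euler{D}}$; composing with the additive-envelope adjunction gives precisely $((-)^{\euler{D}},\mathbf{J}^{\euler{D}})$. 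No serious obstacle arises, since the substantive content is already contained in the cited results of \cite{Ri}; the only delicate point is to observe that the strict $2$-naturality witnesses furnish strong transformations with identity structure $2$-morphisms, hence suffice for the bicategorical notion of adjunction used throughout the paper.
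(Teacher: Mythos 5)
Your argument is correct and follows essentially the same route as the paper: the adjunction equivalences are precomposition with $\Phi_{\mathcal{C}}$, respectively $\Psi_{\mathcal{C}}$ (equivalences by the cited results of \cite{Ri}), with naturality in $\mathcal{C}$ coming from Equation~\eqref{EnvelopeNaturality}, naturality in $\mathcal{D}$ from associativity of composition, and the $(-)^{\euler{D}}$ case obtained by composing the two adjunctions. Your additional remarks on $1,2$-fullness and on $(-)^{\euler{K}}$ restricting to additive categories only make explicit what the paper leaves implicit.
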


\begin{proof}
 Given $\mathcal{C} \in \on{Ob}\mathbf{Cat}_{\Bbbk}$ and $\mathcal{D} \in \on{Ob}\mathbf{Cat}_{\Bbbk}^{\oplus}$, the $2$-naturality of the equivalence
 \[
  \mathbf{Cat}_{\Bbbk}(\mathcal{C}^{\oplus},\mathcal{D}) \xrightarrow{\circ \Phi_{\mathcal{C}}} \mathbf{Cat}_{\Bbbk}(\mathcal{C},\mathcal{D})
 \]
 follows from Equation \eqref{EnvelopeNaturality}. The $2$-naturality in $\mathcal{D}$ follows from the associativity of composition of functors, since the equivalences are given by precomposition. The proof for Karoubi envelope is completely analogous.
\end{proof}

\begin{remark}
 Observe that the bicategorical adjunction above is almost $2$-categorical, the only condition missing being that it is given by equivalences rather than isomorphisms of $\on{Hom}$-categories.
\end{remark}

Combining Proposition \ref{PseudoFCoco} with Proposition \ref{Biadjunction}, we obtain the following:
\begin{corollary}\label{SummaryCoCo}
 The $2$-categories $\mathbf{Cat}_{\Bbbk}^{\oplus}, \mathbf{Cat}_{\Bbbk}^{\euler{K}}, \mathbf{Cat}_{\Bbbk}^{\euler{D}}$ are bicategorically cocomplete. Given a $\Bbbk$-linear bicategory $\csym{B}$, the bicategories 
 \[
  [\csym{B},\mathbf{Cat}_{\Bbbk}^{\oplus}]_{\Bbbk}, \; [\csym{B},\mathbf{Cat}_{\Bbbk}^{\euler{K}}]_{\Bbbk}, [\csym{B},\mathbf{Cat}_{\Bbbk}^{\euler{D}}]_{\Bbbk}
 \]
 are bicategorically cocomplete. 
\end{corollary}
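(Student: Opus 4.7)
The plan is to apply the general principle that left bicategorical adjoints preserve bicategorical colimits, using the three adjunctions from Proposition \ref{Biadjunction}. Crucially, each of $\mathbf{Cat}_{\Bbbk}^{\oplus}, \mathbf{Cat}_{\Bbbk}^{\euler{K}}, \mathbf{Cat}_{\Bbbk}^{\euler{D}}$ is a $1,2$-full $2$-subcategory of $\mathbf{Cat}_{\Bbbk}$, so the right adjoints $\mathbf{J}^{\oplus}, \mathbf{J}^{\euler{K}}, \mathbf{J}^{\euler{D}}$ are fully faithful on hom-categories---in fact they act as the identity there. This will let me convert colimits in $\mathbf{Cat}_{\Bbbk}$ into colimits in the subbicategories by an application of the respective envelope $2$-functor.

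For the first assertion, I treat $\mathbf{Cat}_{\Bbbk}^{\oplus}$; the arguments for $\mathbf{Cat}_{\Bbbk}^{\euler{K}}$ and $\mathbf{Cat}_{\Bbbk}^{\euler{D}}$ are identical. Given a weight $\mathbf{W}: \csym{J}^{\on{op}} \to \mathbf{Cat}_{\Bbbk}$ and a diagram $\mathbf{F}: \csym{J} \rightarrow \mathbf{Cat}_{\Bbbk}^{\oplus}$, I would first form the weighted colimit $\mathbf{W} \star (\mathbf{J}^{\oplus}\circ \mathbf{F})$ in $\mathbf{Cat}_{\Bbbk}$, whose existence is guaranteed by Proposition \ref{Cocomplete}, and then apply the left adjoint $(-)^{\oplus}$. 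Its universal property can be verified by the chain of strongly natural equivalences
\[
\mathbf{Cat}_{\Bbbk}^{\oplus}\!\left(\left(\mathbf{W}\star \mathbf{J}^{\oplus}\mathbf{F}\right)^{\oplus}\!, \mathcal{D}\right)
\simeq \mathbf{Cat}_{\Bbbk}\!\left(\mathbf{W}\star \mathbf{J}^{\oplus}\mathbf{F}, \mathbf{J}^{\oplus}\mathcal{D}\right)
\simeq [\csym{J}^{\on{op}},\mathbf{Cat}_{\Bbbk}]_{\Bbbk}\!\left(\mathbf{W}, \mathbf{Cat}_{\Bbbk}^{\oplus}(\mathbf{F}-,\mathcal{D})\right),
\]
where the first step applies Proposition \ref{Biadjunction} and the second combines the universal property of the colimit in $\mathbf{Cat}_{\Bbbk}$ with the full faithfulness of $\mathbf{J}^{\oplus}$ on hom-categories.

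For the pseudofunctor bicategories, post-composition with the envelope adjunctions induces bicategorical adjunctions
\[
[\csym{B},\mathbf{Cat}_{\Bbbk}]_{\Bbbk} \rightleftarrows [\csym{B},\mathbf{Cat}_{\Bbbk}^{\oplus}]_{\Bbbk},
\]
and similarly for the Karoubi and combined envelopes. Applying the same chain of equivalences in this context, now with Proposition \ref{PseudoFCoco} supplying the bicategorical cocompleteness of $[\csym{B},\mathbf{Cat}_{\Bbbk}]_{\Bbbk}$ in place of Proposition \ref{Cocomplete}, produces the required colimits. Since colimits in $[\csym{B},\mathbf{Cat}_{\Bbbk}]_{\Bbbk}$ are pointwise and the envelope $2$-functors commute with pointwise evaluation, these colimits are themselves pointwise, which is also consistent with applying the first part of the statement at each object of $\csym{B}$.

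The main technical obstacle, largely a matter of bookkeeping, is verifying that post-composition with the envelope adjunctions genuinely yields a bicategorical adjunction between the pseudofunctor bicategories, with strongly natural, not merely pointwise, equivalences. This reduces to checking compatibility of the unit and counit of Proposition \ref{Biadjunction} with the structure $2$-morphisms of $\Bbbk$-linear pseudofunctors and with the coherence data of strong transformations---a routine application of the $2$-functoriality of the bifunctor $[\csym{B}, -]_{\Bbbk}$ on adjoint pairs, but making the coherence data fully explicit would be tedious.
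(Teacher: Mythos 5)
Your proposal is correct and takes essentially the same route as the paper: the paper also transfers colimits along the reflections $((-)^{\oplus},\mathbf{J}^{\oplus})$, $((-)^{\euler{K}},\mathbf{J}^{\euler{K}})$, $((-)^{\euler{D}},\mathbf{J}^{\euler{D}})$ from Proposition \ref{Biadjunction}, using the cocompleteness of $\mathbf{Cat}_{\Bbbk}$ and of $[\csym{B},\mathbf{Cat}_{\Bbbk}]_{\Bbbk}$, and for the pseudofunctor bicategories it likewise lifts the adjunctions by post-composition to $\left([\csym{B},(-)^{\oplus}]_{\Bbbk},[\csym{B},\mathbf{J}^{\oplus}]_{\Bbbk}\right)$ and its analogues. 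You merely spell out the chain of equivalences and the $1,2$-fullness of the inclusions, which the paper leaves implicit.
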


\begin{proof}
 The first part of the statement is an immediate consequence of the preceding results. The second part follows since the bicategorical adjunctions 
 \[
  ((-)^{\oplus}, \mathbf{J}^{\oplus}), \; ((-)^{\euler{K}}, \mathbf{J}^{\euler{K}}), \; ((-)^{\euler{D}}, \mathbf{J}^{\euler{D}})
 \]
 give bicategorical adjunctions
 \[
  \left([\csym{B},(-)^{\oplus}]_{\Bbbk},[\csym{B},\mathbf{J}^{\oplus}]_{\Bbbk}\right), \;  \left([\csym{B},(-)^{\euler{K}}]_{\Bbbk},[\csym{B},\mathbf{J}^{\euler{K}}]_{\Bbbk}\right), \;  \left([\csym{B},(-)^{\euler{D}}]_{\Bbbk},[\csym{B},\mathbf{J}^{\euler{D}}]_{\Bbbk}\right).
 \]
\end{proof}

Corollary \ref{SummaryCoCo} gives  two equivalent ways to compute colimits in $[\csym{B},\mathbf{Cat}_{\Bbbk}^{\euler{D}}]_{\Bbbk}$ (and the other two pseudofunctor bicategories considered): we may first compute the same colimit in $\mathbf{Cat}_{\Bbbk}$, apply the envelope $(-)^{\euler{D}}$ and lift it pointwise to a colimit in $[\csym{B},\mathbf{Cat}_{\Bbbk}^{\euler{D}}]_{\Bbbk}$, or we may compute the colimit in $\mathbf{Cat}_{\Bbbk}$, lift it pointwise to a colimit in $[\csym{B},\mathbf{Cat}_{\Bbbk}]_{\Bbbk}$ and then apply the envelope $(-)^{\euler{D}}$. 

Colimits in ordinary category theory give universal properties expressed in terms of unique factorization of morphisms. For bicategories, the universal properties are expressed in terms of equivalences of categories of $1$-morphisms. Following \cite{Ke2}, we call the resulting factorization of $1$-morphisms up to invertible $2$-morphisms the {\it one-dimensional aspect} of the universal property, and the resulting functorial bijections of $2$-morphisms the {\it two-dimensional aspect} of the universal property.
The next result further facilitates the computation of colimits. It can be shown by slightly modifying the proof of the analogous observation made in \cite[Section 3]{Ke1}, for non-enriched bicategories.
\begin{proposition}\label{OneDimAspect}
 Let $\csym{B}$ be bicategorically cocomplete. The one-dimensional aspect of the universal property of a weighted colimit in $\csym{B}$ implies its two-dimensional aspect.
\end{proposition}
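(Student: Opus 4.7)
The plan is to exhibit an internal equivalence between $\mathtt{c}$ and the bona fide weighted colimit $\mathtt{c}_{0} := \mathbf{W}\star \mathbf{F}$ -- which exists by bicategorical cocompleteness of $\csym{B}$ -- intertwining the respective cocones, and then transport the two-dimensional aspect at $\mathtt{c}_{0}$ along this equivalence to obtain it at $\mathtt{c}$.

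Write $\eta: \mathbf{W} \rightarrow \csym{B}(\mathbf{F}-,\mathtt{c})$ for the cocone witnessing the one-dimensional aspect at $\mathtt{c}$, and let $\epsilon: \mathbf{W} \rightarrow \csym{B}(\mathbf{F}-, \mathtt{c}_{0})$ be the universal cocone on $\mathtt{c}_{0}$. For a 1-morphism $f$, let $f_{\ast}$ denote post-composition with $f$, viewed as the corresponding component of a strong transformation of hom-pseudofunctors. The full universal property at $\mathtt{c}_{0}$ produces an essentially unique 1-morphism $p: \mathtt{c}_{0} \rightarrow \mathtt{c}$ with $p_{\ast}\epsilon \cong \eta$, while the one-dimensional aspect at $\mathtt{c}$ applied to $\epsilon$ yields a 1-morphism $q: \mathtt{c} \rightarrow \mathtt{c}_{0}$ with $q_{\ast}\eta \cong \epsilon$.

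Next I would prove that $p$ and $q$ are mutually pseudo-inverse. The computation $(q\hcomp p)_{\ast}\epsilon \cong q_{\ast}p_{\ast}\epsilon \cong q_{\ast}\eta \cong \epsilon$ shows that both $q\hcomp p$ and $\mathbb{1}_{\mathtt{c}_{0}}$ represent $\epsilon$; the two-dimensional aspect at $\mathtt{c}_{0}$ then produces a unique $2$-iso $q\hcomp p \cong \mathbb{1}_{\mathtt{c}_{0}}$. An analogous computation shows that $p\hcomp q$ and $\mathbb{1}_{\mathtt{c}}$ both represent $\eta$, and the one-dimensional aspect at $\mathtt{c}$ -- read, as in the paragraph preceding the statement, as the essential uniqueness up to invertible $2$-morphism of the factorizing 1-morphism -- forces $p\hcomp q \cong \mathbb{1}_{\mathtt{c}}$. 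Hence $p$ is an internal equivalence in $\csym{B}$ with quasi-inverse $q$.

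Finally, the comparison functor for $\mathtt{c}$ at $\mathtt{i}$ factors, up to natural iso coming from $p_{\ast}\epsilon \cong \eta$, as
\[
\csym{B}(\mathtt{c},\mathtt{i}) \xrightarrow{- \hcomp p} \csym{B}(\mathtt{c}_{0},\mathtt{i}) \xrightarrow{\sim} [\csym{J}^{\on{op}},\mathbf{Cat}](\mathbf{W}, \csym{B}(\mathbf{F}-,\mathtt{i})),
\]
the second arrow being the comparison at $\mathtt{c}_{0}$. Since $p$ is an equivalence, $-\hcomp p$ is an equivalence of categories, and hence so is the composite, which is precisely the two-dimensional aspect at $\mathtt{c}$. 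I expect the main subtlety to lie in the previous step: the argument uses the one-dimensional aspect at $\mathtt{c}$ in its strongest form, namely existence \emph{and} essential uniqueness of factorizing 1-morphisms; under a weaker reading -- for instance, mere essential surjectivity of the comparison functor -- one would only obtain a split-epi / split-mono pair $(p,q)$, not an equivalence, and the transport of the two-dimensional aspect would break down.
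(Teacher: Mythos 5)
Your argument is correct and is essentially the intended one: the paper gives no written proof, deferring to the observation in Kelly's Section 3, and the argument behind that reference is exactly your comparison with the genuine colimit — construct $p$ and $q$, use essential uniqueness of factorizations on both sides to see they form an internal equivalence, then transport the equivalence of hom-categories along $- \hcomp p$. Your closing caveat is also consistent with the paper's conventions, since its description of the coisoinserter explicitly includes essential uniqueness of the factorizing $1$-morphism (up to a compatible invertible $2$-morphism) as part of the one-dimensional aspect.
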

More precisely, to obtain the $\Bbbk$-linear statement, one may argue directly on the level of cocones: given a colimiting cocone  $\mathbf{W} \star \mathbf{F}$ and a cocone $\mathbf{H}$ satisfying the one-dimensional universal property, one obtains $1$-morphisms $\mathbf{H} \rightarrow \mathbf{W} \star \mathbf{F}$ and $\mathbf{W} \star \mathbf{F} \rightarrow \mathbf{H}$. The respective one-dimensional universal properties then suffice to conclude that these $1$-morphisms are mutually quasi-inverse equivalences.

Ideally, we would like to further restrict our treatment of colimits to the $2$-subcategory $\mathfrak{A}_{\Bbbk}^{f}$ of $\mathbf{Cat}_{\Bbbk}^{\euler{D}}$, consisting of finitary categories. However, the condition of finite dimensional hom-spaces between objects is not preserved under taking weighted colimits - in fact, it fails already for conical colimits. 

This is very different from the setting of classical representation theory: the category $\mathbf{vec}_{\Bbbk}$ of finite dimensional vector spaces over $\Bbbk$ is abelian, and hence, in particular, cocomplete. As a consequence, if $\mathcal{C}$ is a $\Bbbk$-linear category, then the category $\mathcal{C}\!\on{-mod}$, defined as $\mathbf{Cat}_{\Bbbk}(\mathcal{C}, \mathbf{vec}_{\Bbbk})$, also is $\Bbbk$-linear and abelian. Our claim means that the categorification of this statement to finitary bicategories is false.

We now give an example of this phenomenon. Let $\mathbf{Cat}_{\Bbbk}^{\on{f.d.}}$ denote the $1,2$-full $2$-subcategory of $\mathbf{Cat}_{\Bbbk}$ whose objects are $\Bbbk$-linear categories with finite dimensional hom-spaces.

 \begin{example}
  
  Let $\Bbbk$ be a field. Consider the free $\Bbbk$-linear categories $\mathcal{A}_{2}, \mathcal{A}_{3}$ on the quivers
  \[
   A_{2} = \begin{tikzcd}[sep = small] {\euler{1}} \arrow[r, "a"] & {\euler{2}} \end{tikzcd}, \quad A_{3} = \begin{tikzcd}[sep = small] 1 \arrow[r, "b"] & 2 \arrow[r, "c"] & 3 \end{tikzcd}
  \]
 and the $\Bbbk$-linear functors $F_{b},F_{c}: \mathcal{A}_{2} \rightarrow \mathcal{A}_{3}$ determined by
 \[
  F_{b}(a) = b, \; F_{c}(a) = c.
 \]
 The coequalizer $\on{coeq}(F_{b},F_{c})$ in $\mathbf{Cat}_{\Bbbk}$ is given by the free $\Bbbk$-linear category on the quiver $\begin{tikzcd} X  \arrow[loop right]{r}{x} \end{tikzcd}$, together with the $\Bbbk$-linear functor $\mathcal{A}_{3} \xrightarrow{C_{b=c}} \on{coeq}(F_{b},F_{c})$ determined by 
 \[
  C_{b=c}(b) = C_{b=c}(c) = x.
 \]
 Assume that $\mathbf{Cat}_{\Bbbk}^{\on{f.d.}}$ admits a coequalizer 
 \[
 \mathcal{A}_{3} \xrightarrow{C_{b=c}^{\on{f.d.}}} \on{coeq}^{\on{f.d.}}(F_{b},F_{c})
 \]
 of the above functors. The universal property of $\on{coeq}(F_{b},F_{c})$ gives a $\Bbbk$-linear functor $K: \on{coeq}(F_{b},F_{c}) \rightarrow \on{coeq}^{\on{f.d.}}(F_{b},F_{c})$ such that $C_{b=c}^{\on{f.d.}} = K \circ C_{b=c}$. 

 Given $m \in \mathbb{Z}_{> 0}$, let $\mathcal{T}_{m}$ be the $\Bbbk$-linear category with a unique object $Y$ and a morphism $y \in \on{End}_{\mathcal{T}_{m}}(Y)$ such that $\on{End}_{\mathcal{T}_{m}}(Y) = \Bbbk[y]/\langle y^{m} \rangle$. Let $T_{m}: \mathcal{A}_{3} \rightarrow \mathcal{T}_{m}$ be the $\Bbbk$-linear functor determined by $T_{m}(b) = T_{m}(c) = y$. In particular, $T_{m}$ coequalizes $F_{b},F_{c}$. Applying the universal properties of the respective coequalizers, we obtain the following commutative diagram of $\Bbbk$-linear functors:
 \[
  \begin{tikzcd}
   \mathcal{A}_{3} \arrow[dr, swap, "T_{m}"] \arrow[r, "C_{b=c}"] & \on{coeq}(F_{b},F_{c}) \arrow[r, "K"] \arrow[d, "\widetilde{T}_{m}"] & \on{coeq}_{F_{b},F_{c}}^{\on{f.d.}} \arrow[dl, "\widetilde{T}_{m}^{\on{f.d.}}"] \\
   & \mathcal{T}_{m}
  \end{tikzcd}
 \]
 where $\widetilde{T}_{m}, \widetilde{T}_{m}^{\on{f.d.}}$ are obtained using the respective universal properties of their domains and hence are the unique $\Bbbk$-linear functors making the left inner triangle and the outer triangle commute, respectively.
 The functor $\widetilde{T}_{m}$ is determined by $\widetilde{T}_{m}(x) = y$. The right triangle part of the above diagram gives the following diagram of associative $\Bbbk$-algebras:
 \[
  \begin{tikzcd}
  \Bbbk[x] \arrow[d, two heads, "{x \mapsto y}"] \arrow[r, "K_{X,X}"] & \on{End}_{\on{coeq}_{F_{b},F_{c}}^{\on{f.d.}}}(K(X)) \arrow[dl, two heads, "(\widetilde{T}_{m}^{\on{f.d.}})_{K(X),K(X)}"] \\
   \Bbbk[y]/\langle y^{m} \rangle
  \end{tikzcd}
 \]
 Using image factorization, we replace $\on{End}_{\on{coeq}_{F_{b},F_{c}}^{\on{f.d.}}}(K(X))$ with its subalgebra generated by $K_{X,X}(x) =: z$. Since $\on{End}_{\on{coeq}_{F_{b},F_{c}}^{\on{f.d.}}}(K(X))$ is finite dimensional by assumption, the aforementioned subalgebra is a finite dimensional quotient of $\Bbbk[x]$, hence isomorphic to $\Bbbk[w]/\langle w^{k} \rangle$, for some $k$. Since $z$ is mapped to $y$ under $(\widetilde{T}_{m}^{\on{f.d.}})_{K(X),K(X)}$, we see that $z$ must be a radical element with nilpotency degree greater than or equal to $m$. This implies $k \geq m$. But, while we may vary $m$, the coequalizer, and thus also the integer $k$, remain unchanged, so the existence of a coequalizer in $\mathbf{Cat}_{\Bbbk}^{\on{f.d.}}$ would imply the existence of $k$ such that $k \geq m$ for all $m > 0$, which is a contradiction.
 \end{example}

In view of the Krull-Schmidt theorem for finitary categories, we see from Corollary \ref{SummaryCoCo} that colimits of finitary birepresentations, although themselves not necessarily finitary, may be computed starting from the indecomposable objects of the underlying categories. Formally, we have the following:
\begin{proposition}\label{IndecSuffice}
 Let $\mathcal{C}$ be a finitary category. Let $\mathcal{C}\!\on{-indec}$ be the full subcategory of indecomposable objects of $\mathcal{C}$. Then $\mathcal{C} \simeq (\mathcal{C}\!\on{-indec})^{\oplus}$.
 
 Let $X$ be a set of objects of $\mathcal{C}$, and let $\mathcal{X}$ be the full subcategory of $\mathcal{C}$ satisfying $\on{Ob}\mathcal{X} = X$. Then $\on{add}X \simeq \mathcal{X}^{\euler{D}}$.
\end{proposition}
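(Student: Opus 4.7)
The plan is to obtain the asserted equivalences by invoking the universal properties of the envelopes $(-)^{\oplus}$ and $(-)^{\euler{K}}$ listed in the bullet points preceding Proposition \ref{Biadjunction}, and then verifying essential surjectivity and full faithfulness using the Krull-Schmidt theorem for finitary categories.

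For the first statement, I would start with the inclusion $\iota: \mathcal{C}\!\on{-indec} \hookrightarrow \mathcal{C}$, which is a $\Bbbk$-linear functor into an object of $\mathbf{Cat}_{\Bbbk}^{\oplus}$ since $\mathcal{C}$ is finitary, hence additive. The universal property cited from \cite[Proposition 75]{Ri} yields a $\Bbbk$-linear functor $F: (\mathcal{C}\!\on{-indec})^{\oplus} \rightarrow \mathcal{C}$, unique up to isomorphism, such that $F \circ \Phi_{\mathcal{C}\!\on{-indec}} \simeq \iota$. Explicitly, $F$ sends a formal direct sum of indecomposables to the corresponding honest direct sum in $\mathcal{C}$. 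Essential surjectivity of $F$ is exactly the statement that every object of a finitary (hence Krull-Schmidt) category decomposes as a finite direct sum of indecomposables, which follows from the equivalence $\mathcal{C} \simeq A\!\on{-proj}$ for a finite-dimensional $\Bbbk$-algebra $A$. Full faithfulness reduces to the fact that morphisms between formal direct sums in $(\mathcal{C}\!\on{-indec})^{\oplus}$ are, by construction, matrices of morphisms in $\mathcal{C}\!\on{-indec}$, and the universal property of direct sums in $\mathcal{C}$ identifies these with morphisms between the corresponding direct sums there; this identification is precisely $F$ on hom-spaces, using that the inclusion $\mathcal{C}\!\on{-indec} \hookrightarrow \mathcal{C}$ is itself fully faithful as a full subcategory.

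For the second statement, the strategy is analogous, using both envelopes via $\mathcal{X}^{\euler{D}} = (\mathcal{X}^{\oplus})^{\euler{K}}$. Since $\mathcal{C}$ is finitary, it is additive and Karoubian, and so is $\on{add}X$ (which by definition is closed under finite direct sums and under passage to direct summands). The composite inclusion $\mathcal{X} \hookrightarrow \on{add}X$ is $\Bbbk$-linear into an object of $\mathbf{Cat}_{\Bbbk}^{\euler{D}}$, so composing the universal properties of $(-)^{\oplus}$ and $(-)^{\euler{K}}$ yields a $\Bbbk$-linear functor $G: \mathcal{X}^{\euler{D}} \rightarrow \on{add}X$. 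Essential surjectivity is immediate from the definitions: objects of $\on{add}X$ are direct summands of finite direct sums of objects of $X$, and these are exactly the objects occurring in $\mathcal{X}^{\euler{D}}$. Full faithfulness again reduces, via the explicit descriptions of morphisms in both envelopes (matrices of morphisms, then pairs of idempotents), to the full faithfulness of $\mathcal{X} \hookrightarrow \on{add}X$, which holds since $\mathcal{X}$ is a full subcategory of $\mathcal{C}$ and $\on{add}X$ is a full subcategory of $\mathcal{C}$ containing $\mathcal{X}$.

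The main obstacle is really the first part, where essential surjectivity rests on Krull-Schmidt; once one is comfortable invoking this, the remaining verifications are formal manipulations with the universal properties of the envelopes. In the second part, the only subtle point is to note that $\on{add}X$, as a subcategory of the finitary (hence Karoubian) category $\mathcal{C}$, is itself additive and idempotent split, so that $G$ lands in the right class of categories and the universal property of $(-)^{\euler{D}}$ applies.
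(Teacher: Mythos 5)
Your proposal is correct and follows essentially the same route as the paper: in both cases the equivalence is the comparison functor induced by the fully faithful inclusion, with essential surjectivity supplied by the Krull--Schmidt theorem (resp.\ the definition of $\on{add}X$). The only cosmetic difference is that you extend the inclusion along the universal property and verify full faithfulness by hand via the matrix/idempotent description of the envelopes, whereas the paper applies the envelope $2$-functors to the inclusion, cites \cite[Lemma 101]{Ri} for preservation of full faithfulness, and uses that $\Phi_{\mathcal{C}}$ (resp.\ $(\on{add}X)^{\euler{D}} \simeq \on{add}X$) is already an equivalence since the target is additive and idempotent split.
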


\begin{proof}
 Since $\mathcal{C}$ is additive, by \cite[Proposition 99]{Ri}, the functor $\Phi_{\mathcal{C}}: \mathcal{C} \rightarrow \mathcal{C}^{\oplus}$ is an equivalence. 
 Let $\mathrm{I}$ be the inclusion functor $\mathcal{C}\!\on{-indec}\hookrightarrow \mathcal{C}$. Since $\mathrm{I}$ is full and faithful, by \cite[Lemma 101]{Ri}, so is $\mathrm{I}^{\oplus}: (\mathcal{C}\!\on{-indec})^{\oplus} \rightarrow \mathcal{C}^{\oplus}$. The Krull-Schmidt theorem implies that all the isomorphism classes of objects of $\mathcal{C}$ are given by those of finite direct sums of objects in $\mathcal{C}\!\on{-indec}$. Thus, $\mathrm{I}^{\oplus}$ is essentially surjective, and hence an equivalence.
 
 Similarly, since $\on{add}X$ is additive and idempotent split, we have 
 \[
  (\on{add}X)^{\euler{D}} \simeq \on{add}X,
 \]
 the full and faithful functor $\mathcal{X} \rightarrow \on{add}X$ gives rise to $\mathcal{X}^{\euler{D}} \rightarrow (\on{add}X)^{\euler{D}}$, which also is full and faithful, and further also essentially surjective, since any object of $\on{add}X$ is isomorphic to a direct summand of a direct sum of objects in $X$.
\end{proof}

\section{Projective bimodules for star algebras}\label{s4}

\subsection{The bicategories \texorpdfstring{$\widetilde{\csym{B}}$ and $\csym{B}$}{B-tilde and B}}
Given $a,b \in \mathbb{Z}$ with $a \leq b$, we denote by $[a,b]$ the set $\setj{a, a+1, \ldots, b}$.

Let $\mathtt{S}_{n}$ denote the star graph on $n+1$ vertices. Label the unique internal node of $\mathtt{S}_{n}$ by $0$, and the leaves by $1,2,\ldots, n$. Let $\Lambda_{n}$ denote the zigzag algebra on $\mathtt{S}_{n}$, i.e. the quotient of the path algebra of
\[
 \begin{tikzcd}
  & 0 \arrow[dl, bend right=20, near end, swap, "a_{1}"] \arrow[d, shift left, dotted] \arrow[dr, bend left=20, near end, "a_{n}"] \\
  1 \arrow[ur, bend right=20, shift left = 0.9, swap, near start, "b_{1}"] & \cdots \arrow[u, dotted, shift left] & n \arrow[ul, bend left =20, shift right = 0.9, near start, "b_{n}"]
 \end{tikzcd}
\]
by the ideal given by the sum of the third power of the arrow ideal and the ideal given by relations $a_{j}b_{i} = 0$, for $i\neq j$, and $b_{1}a_{1} = \cdots = b_{n}a_{n} =: c$.
Let $e_{0},\ldots, e_{n}$ denote the \idem \ induced by the labelling on $\mathtt{S}_{n}$. For $k \in [1,n]$, denote by $c_{k}$ the element given by the $2$-cycle $a_{k}b_{k}$.

For a more extensive study of zigzag algebras, see for example \cite{ET}. 

 Consider the $\Bbbk$-linear monoidal subcategory $\widetilde{\csym{B}}_{n}$ of $(\Lambda_{n}\!\on{-mod-}\Lambda_{n}, \otimes_{\Lambda_{n}})$ given by the additive closure $\on{add}\setj{ {}_{\Lambda_{n}}\Lambda_{n}{}_{\Lambda_{n}}, \; \Lambda_{n} e_{k} \otimes_{\Bbbk} e_{0}\Lambda_{n} \; | \; k \in [0,n]}$. Viewed as a bicategory with a unique object, $\widetilde{\csym{B}}_{n}$ is a finitary bicategory. We denote its unique object by $\mathtt{i}$.
 In particular, $\widetilde{\csym{B}}_{n}$ is biequivalent to its essential image in $\mathbf{Cat}_{\Bbbk}(\Lambda_{n}\!\on{-mod},\Lambda_{n}\!\on{-mod})$, under the pseudofunctor sending a bimodule $M$ to the functor $M \otimes_{\Lambda_{n}} -$. Simple transitive $2$-representations of the $2$-category $\widetilde{\csym{B}}_{n}^{\on{str}}$ given by this essential image were studied in \cite{Zi}. 
 In this section, we use the partial results of \cite{Zi} to give a description of the underlying categories and action matrices for simple transitive birepresentations of $\widetilde{\csym{B}}_{n}$. As observed in \cite[2.3]{MMMTZ1}, studying the simple transitive birepresentations of $\widetilde{\csym{B}}_{n}$ is equivalent to studying the simple transitive $2$-representations of $\widetilde{\csym{B}}_{n}^{\on{str}}$. 
 
 To simplify the notation, we now fix $n \in \mathbb{Z}_{>0}$ and
 denote $\Lambda_{n}$ by $\Lambda$, denote $\widetilde{\csym{B}}^{\on{str}}_{n}$ by $\widetilde{\csym{B}}^{\on{str}}$, and finally write $\widetilde{\csym{B}}$ for $\widetilde{\csym{B}}_{n}$. We will introduce the subscripts again whenever there is a risk of ambiguity, in particular when different values of $n$ need to be considered simultaneously.
 
 Using the canonical isomorphism $\on{End}_{\Lambda\!\on{-mod}}(\Lambda) \simeq \Lambda^{\on{op}}$, we identify morphisms of indecomposable projective modules with elements of $\Lambda$. For example, given $i \in [1,n]$, this yields $\on{Hom}_{\Lambda\!\on{-proj}}(\Lambda e_{i},\Lambda e_{0}) = \Bbbk\setj{a_{i}}$. This applies also to the indecomposable bimodules $\Lambda e_{k} \otimes_{\Bbbk} e_{0}\Lambda$, where we identify morphisms with the images of the generators $ e_{k} \otimes e_{0}$. We obtain
\begin{equation}\label{CanonId}
  \on{Hom}_{\Lambda\!\on{-mod-}\!\Lambda}(\Lambda e_{j} \otimes_{\Bbbk} e_{0}\Lambda, \Lambda e_{k} \otimes_{\Bbbk} e_{0}\Lambda) = e_{j}\Lambda e_{k} \otimes_{\Bbbk} e_{0}\Lambda e_{0}.
\end{equation}
 Similarly, we identify morphisms from $\Lambda$ with the images of the generator $1$. An easy but tedious calculation yields
 \begin{equation}\label{HomsLambda}
 \begin{aligned}
  &\on{End}_{\Lambda\!\on{-mod-}\!\Lambda}(\Lambda) = \Bbbk\setj{1, c, c_{k} \; | \; k \in [1,n]}; \\
  &\on{Hom}_{\Lambda\!\on{-mod-}\!\Lambda}(\Lambda e_{j} \otimes_{\Bbbk} e_{0}\Lambda, \Lambda) = e_{j}\Lambda e_{0}; \\
  &\on{Hom}_{\Lambda\!\on{-mod-}\!\Lambda}(\Lambda, \Lambda e_{j} \otimes_{\Bbbk} e_{0}\Lambda) = \Bbbk\setj{b_{j} \otimes c + c_{j}\otimes b_{j}, c_{i} \otimes c}, \, j \neq 0; \\
  &\on{Hom}_{\Lambda\!\on{-mod-}\!\Lambda}(\Lambda, \Lambda e_{0} \otimes_{\Bbbk} e_{0}\Lambda) = \Bbbk\bigg\{e_{0} \otimes c + c \otimes e_{0} + \sum_{j=1}^{n}a_{j}\otimes b_{j}, c \otimes c\bigg\}.
 \end{aligned}
 \end{equation}
 Let $\mathcal{I}$ denote the ideal of $\on{add}\setj{ {}_{\Lambda}\Lambda {}_{\Lambda}, \; \Lambda e_{k} \otimes_{\Bbbk} e_{0}\Lambda \; | \; k \in [0,n]}$ determined by
 \begin{equation}\label{BiidealCalc}
 \begin{aligned}
  &\mathcal{I}(\Lambda, \Lambda) = \Bbbk\setj{c_{k} \; | \; k \in [1,n]}, \; \mathcal{I}(\Lambda e_{j} \otimes_{\Bbbk} e_{0}\Lambda, \Lambda) = 0, \\
  &\mathcal{I}(\Lambda, \Lambda e_{0} \otimes_{\Bbbk} e_{0}\Lambda) = 0, \; \mathcal{I}(\Lambda, \Lambda e_{j} \otimes_{\Bbbk} e_{0}\Lambda) = \Bbbk\setj{c_{i} \otimes c}, j \neq 0 \\
  &\mathcal{I}(\Lambda e_{j} \otimes_{\Bbbk} e_{0}\Lambda, \Lambda e_{k} \otimes_{\Bbbk} e_{0}\Lambda) =
  \Bbbk\setj{c_{j} \otimes e_{0}, c_{j} \otimes c} \text{ if } j = k \neq 0, \text{ otherwise }0.
 \end{aligned}
 \end{equation}

 \begin{lemma}\label{BiidealVer}
  The ideal $\mathcal{I}$ gives a biideal of $\widetilde{\csym{B}}$.
 \end{lemma}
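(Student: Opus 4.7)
The plan is to reduce the biideal conditions to a finite check on generators. Since $\widetilde{\csym{B}}$ has a unique object, $\mathcal{I}$ being a biideal amounts to two conditions on the monoidal category $\widetilde{\csym{B}}(\mathtt{i,i})$: vertical closure under pre- and post-composition with arbitrary $2$-morphisms, and horizontal closure under whiskering on either side by arbitrary $1$-morphisms. Both conditions are $\Bbbk$-bilinear, and every $1$-morphism of $\widetilde{\csym{B}}$ is a direct summand of a finite direct sum of the indecomposable generators $\Lambda$ and $\Lambda e_{k} \otimes_{\Bbbk} e_{0} \Lambda$ for $k \in [0,n]$, so it suffices to verify both conditions using only these generators and the listed generators of $\mathcal{I}$ in \eqref{BiidealCalc}.

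For the ideal condition, I would pre- and post-compose each generator of $\mathcal{I}$ with each basis element of the relevant Hom-space described in \eqref{HomsLambda} and verify that the result is still in $\mathcal{I}$. The computations rest on the zigzag-algebra relations $a_{i} b_{j} = \delta_{ij} c_{j}$, $b_{i} a_{j} = \delta_{ij} c$, together with $c^{2} = c_{k}^{2} = c c_{k} = 0$, which all follow from $e_{i} e_{j} = \delta_{ij} e_{i}$ and the radical-cubed relation. These annihilate most compositions or force them to land in the subspaces listed in \eqref{BiidealCalc}.

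For the whiskering condition, whiskering by the monoidal unit $\Lambda$ is trivial up to unitors, so only whiskering by generators $\mathrm{H} = \Lambda e_{k} \otimes_{\Bbbk} e_{0} \Lambda$ with $k \in [0,n]$ needs to be checked. The relevant bimodule tensor product is
\[
(\Lambda e_{k} \otimes_{\Bbbk} e_{0} \Lambda) \otimes_{\Lambda} (\Lambda e_{j} \otimes_{\Bbbk} e_{0} \Lambda) \cong \Lambda e_{k} \otimes_{\Bbbk} (e_{0} \Lambda e_{j}) \otimes_{\Bbbk} e_{0} \Lambda,
\]
and the middle factor $e_{0} \Lambda e_{j}$ is $1$-dimensional when $j \neq 0$ (spanned by $b_{j}$) and $2$-dimensional when $j = 0$ (spanned by $e_{0}$ and $c$). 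Under the canonical identification \eqref{CanonId}, whiskering a generator $\varphi \in \mathcal{I}$ by $\mathrm{H}$ on one side amounts to inserting $\varphi$'s defining tensor element on the appropriate side and contracting via the relations of $\Lambda$. Each such contraction either vanishes (whenever a factor such as $e_{0} c_{k}$, $c b_{j}$, or a length-three path appears) or produces an element of the form $c_{j} \otimes c$, $c_{j} \otimes e_{0}$, or a scalar multiple of some $c_{k}$, all of which lie in $\mathcal{I}$.

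The main technical obstacle is the case analysis: each of the families of generators of $\mathcal{I}$ listed in \eqref{BiidealCalc} must be tracked through left and right whiskering by each of the $n+2$ indecomposable generators, using the appropriate structure isomorphisms of $\widetilde{\csym{B}}$. However, the cell-theoretic restrictions on nonzero Hom-spaces and the vanishing imposed by the idempotent orthogonality and the radical-cubed relation collapse most cases to $0$ or directly to the generators listed in \eqref{BiidealCalc}, leaving only a small number of explicit contractions that require verification.
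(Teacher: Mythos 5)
Your proposal is correct and follows essentially the same route as the paper: reduce to the indecomposable generating $1$-morphisms and the generators of $\mathcal{I}$ listed in \eqref{BiidealCalc}, and verify closure under whiskering by explicit contraction using the zigzag relations and the Hom-space bases \eqref{HomsLambda}. The paper's proof is just a compressed version of this case check, observing that whiskering on one side vanishes uniformly (since $v c_{j} = 0$ for $v \in e_{0}\Lambda$) and treating one representative case, with matrix entries over the listed subspaces, for the other side.
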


 \begin{proof}
  It suffices to show that indecomposable non-identity $1$-morphisms send the $2$-morphisms given by $\mathcal{I}$ back to $\mathcal{I}$ under horizontal composition. Since, for any $v \in e_{0}\Lambda$ and any $j \in [1,n]$, we have $vc_{j} = 0$, it follows that 
  \[
   \on{id}_{\Lambda e_{j} \otimes_{\Bbbk} e_{0}\Lambda} \otimes \alpha = 0, \text{ for } \alpha \in \mathcal{I}.
  \]
  To see that $\mathcal{I}$ is also a right biideal, consider the case of the morphism
  \[
c_{i} \otimes e_{0} \in \mathcal{I}(\Lambda e_{0} \otimes_{\Bbbk} e_{0}\Lambda,\Lambda e_{0} \otimes_{\Bbbk} e_{0}\Lambda).
  \]
  Identifying isomorphic bimodules and identifying morphisms between decomposable bimodules with matrices of those given in \eqref{HomsLambda}, we may write
  \[
   (c_{i} \otimes e_{0}) \otimes \on{id}_{{\Lambda e_{j} \otimes_{\Bbbk} e_{0}\Lambda}} =
   \begin{cases}
    \left(\begin{smallmatrix} c_{i} \otimes e_{0} & 0 \\ 0 & c_{i} \otimes e_{0} \end{smallmatrix}\right) \text{ if } n = 0; \\
    c_{i} \otimes e_{0} \text{ otherwise.}
   \end{cases}
  \]
  from which it is clear that the resulting morphism remains in $\mathcal{I}$. The remaining cases are similar.
 \end{proof}
 Since $\mathcal{I}$ consists of radical morphisms, Lemma \ref{RadNilp} implies that it is nilpotent.
 Consider the unique cell birepresentation $\mathbf{C}$ of $\widetilde{\csym{B}}$, and the unique cell $2$-representation $\mathbf{C}^{\on{str}}$ of $\widetilde{\csym{B}}^{\on{str}}$. From the definition, we know that $\mathbf{C}(\mathtt{i})$ is the quotient of the category $\on{add}\setj{\Lambda e_{k} \otimes_{\Bbbk} e_{0}\Lambda \; | \; k \in [1,n]}$ by the unique maximal nilpotent ideal stable under the left $\widetilde{\csym{B}}$-action given by tensor products over $\Lambda$. Under the identification in \eqref{CanonId}, it corresponds to the maximal ideal $I$ of $\Lambda \otimes_{\Bbbk} e_{0}\Lambda e_{0}$ such that $e_{0}\Lambda \otimes_{\Lambda} I$ belongs to $\Lambda \otimes_{\Bbbk} \on{Rad}(e_{0}\Lambda e_{0})$. A simple calculation yields
 \[
I = \Bbbk\setj{c_{k} \; | \; k \in [1,n]} \otimes_{\Bbbk} e_{0}\Lambda e_{0} + \Lambda \otimes_{\Bbbk} \on{Rad}(e_{0}\Lambda e_{0}),
 \]
 which implies that for the ideal $J = \Bbbk\setj{c_{i} \; | \; i \in [1,n]}$, we have 
 \begin{equation}\label{CellIdentification}
\mathbf{C}(\mathtt{i}) \simeq \Lambda/J\!\on{-proj},\text{ under } \Lambda e_{k} \otimes_{\Bbbk} e_{0}\Lambda \mapsto (\Lambda/J)e_{k}.
 \end{equation}
 Let $A := \Lambda/J$. Remembering the index $n$, we obtain an algebra $A_{n}$, for every $n \in \mathbb{Z}_{>0}$. It follows that 
 \[
  \mathbf{C}(\Lambda e_{k} \otimes_{\Bbbk} e_{0}\Lambda) \simeq A e_{k} \otimes_{\Bbbk} e_{0}A \otimes_{A} -.
 \]
  Since $\mathbf{C}$ is equivalent to $\mathbf{C}^{\on{str}}$ as a birepresentation of $\widetilde{\csym{B}}$, we obtain 
  \begin{equation}\label{cellgives2fun}
\mathbf{C}^{\on{str}}(\Lambda e_{k} \otimes_{\Bbbk} e_{0}\Lambda \otimes_{\Lambda} -) \simeq A e_{k} \otimes_{\Bbbk} e_{0}A \otimes_{A} -.
  \end{equation}
 
 Let $\csym{B}$ denote the bicategory given by 
 \[
(\on{add}\setj{ {}_{A}A{}_{A}, \; A e_{k} \otimes_{\Bbbk} e_{0}A \; | \; k \in [0,n]}, -\otimes_{A}-).
 \]
 Consider its strictification $\csym{B}^{\on{str}}$, defined similarly to $\widetilde{\csym{B}}^{\on{str}}$. Remembering the index, we obtain the bicategories $\csym{B}_{n}$ and the $2$-categories $\csym{B}_{n}^{\on{str}}$. From \eqref{cellgives2fun}, we see that the assignments
 \[
  \mathtt{i} \mapsto \mathtt{i}, \; \mathrm{F} \mapsto \mathbf{C}^{\on{str}}\mathrm{F}, \; \alpha \mapsto \mathbf{C}^{\on{str}}\alpha
 \]
 give a $2$-functor
 \[
  \mathbf{Q}^{\on{str}}: \widetilde{\csym{B}}^{\on{str}} \rightarrow \csym{B}^{\on{str}}.
 \]
 Passing under biequivalences, this gives a pseudofunctor $\mathbf{Q}: \widetilde{\csym{B}} \rightarrow \csym{B}$. As described earlier, $\mathbf{Q}$ maps indecomposable $1$-morphisms to the corresponding indecomposable $1$-morphisms, and hence is essentially surjective. 
 
 Abusing notation, we identify elements of $\Lambda$ and their images of the projection from $\Lambda$ onto $A$, whenever such images are non-zero. A simple but tedious calculation yields
 \begin{equation}\label{HomsBigA}
 \begin{aligned}
  &\on{End}_{A\!\on{-mod-}\!A}(A) = \Bbbk\setj{1, c}; \\
  &\on{Hom}_{A\!\on{-mod-}\!A}(Ae_{j} \otimes_{\Bbbk} e_{0}A, A) = e_{j}Ae_{0}; \\
  &\on{Hom}_{A\!\on{-mod-}\!A}(A, Ae_{j} \otimes_{\Bbbk} e_{0}A) = \Bbbk\setj{b_{j} \otimes c}, \, j \neq 0; \\
  &\on{Hom}_{A\!\on{-mod-}\!A}(A, Ae_{0} \otimes_{\Bbbk} e_{0}A) = \Bbbk\bigg\{e_{0} \otimes c + c \otimes e_{0} + \sum_{j=1}^{n}a_{j}\otimes b_{j}, c \otimes c\bigg\}.
 \end{aligned}
 \end{equation}
 
 \begin{proposition}
  The pseudofunctor $\mathbf{Q}$ induces a biequivalence $\widetilde{\csym{B}}/\mathcal{I} \rightarrow \csym{B}$.
 \end{proposition}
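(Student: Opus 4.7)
The plan is to first show that $\mathbf{Q}$ annihilates the biideal $\mathcal{I}$, so that it factors as
\[
\widetilde{\csym{B}} \twoheadrightarrow \widetilde{\csym{B}}/\mathcal{I} \xrightarrow{\overline{\mathbf{Q}}} \csym{B},
\]
and then verify that $\overline{\mathbf{Q}}$ is a biequivalence. Since both bicategories have a unique object, the verification reduces to checking that the local functor is essentially surjective on $1$-morphisms and fully faithful on $2$-morphism spaces.

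For the annihilation step, I would invoke that $\mathcal{I}$ is nilpotent, as noted just before the proposition, and that $\mathbf{Q}$ acts on $2$-morphisms by $\alpha \mapsto \mathbf{C}^{\on{str}}\alpha$, where $\mathbf{C}^{\on{str}}$ is simple transitive. Applying Lemma \ref{NilpotentAnn} to $\mathbf{C}^{\on{str}}$ yields $\on{ev}_{\mathbf{C}^{\on{str}}}(\mathcal{I}) = 0$, and hence $\mathbf{C}^{\on{str}}\alpha = 0$ for every $\alpha \in \mathcal{I}$; this provides the factorization $\overline{\mathbf{Q}}$.

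Essential surjectivity of $\overline{\mathbf{Q}}$ on $1$-morphisms is immediate from the discussion preceding the proposition: $\mathbf{Q}$ sends the indecomposable $1$-morphisms $\mathbb{1}_{\mathtt{i}}$ and $\Lambda e_{k}\otimes_{\Bbbk}e_{0}\Lambda$ to the indecomposables $\mathbb{1}_{\mathtt{i}}$ and $Ae_{k}\otimes_{\Bbbk}e_{0}A$ of $\csym{B}$, and every $1$-morphism of $\csym{B}$ is a direct sum of these by construction. For full faithfulness on $2$-morphisms, my plan is a case-by-case comparison of the explicit bases in \eqref{HomsLambda} and \eqref{HomsBigA}, with \eqref{BiidealCalc} providing the candidate kernels. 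Concretely, the projection $\pi \colon \Lambda \twoheadrightarrow A = \Lambda/J$ induces on each $2$-morphism space of $\widetilde{\csym{B}}$ a surjection onto the corresponding space in $\csym{B}$, and one verifies that its kernel equals precisely the component of $\mathcal{I}$ prescribed in \eqref{BiidealCalc}. For instance, $\on{End}_{\widetilde{\csym{B}}}(\Lambda) = \Bbbk\setj{1,c,c_{1},\ldots,c_{n}}$ surjects onto $\on{End}_{\csym{B}}(A) = \Bbbk\setj{1,c}$ with kernel $\Bbbk\setj{c_{1},\ldots,c_{n}} = \mathcal{I}(\Lambda,\Lambda)$.

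The most delicate case will be $\on{Hom}(\Lambda,\Lambda e_{j}\otimes_{\Bbbk}e_{0}\Lambda)$ for $j\neq 0$: the $\widetilde{\csym{B}}$-generator $b_{j}\otimes c + c_{j}\otimes b_{j}$ has both a vanishing and a surviving summand under $\pi$, so one must confirm that its image is the nonzero generator $b_{j}\otimes c$ of the $\csym{B}$-space, leaving only $c_{j}\otimes c$, which constitutes $\mathcal{I}$ here, in the kernel. The remaining Hom-spaces in \eqref{HomsLambda} are either treated by analogous short checks or are trivialized by the vanishing of the corresponding $\mathcal{I}$-component. Taken together these comparisons give the local full faithfulness of $\overline{\mathbf{Q}}$, and hence the asserted biequivalence.
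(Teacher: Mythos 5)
Your proposal is correct and follows essentially the same route as the paper: factor $\mathbf{Q}$ through $\widetilde{\csym{B}}/\mathcal{I}$ via Lemma \ref{NilpotentAnn}, get essential surjectivity from the behaviour on indecomposable $1$-morphisms, and deduce local full faithfulness from the explicit Hom-space data in \eqref{HomsLambda}, \eqref{BiidealCalc} and \eqref{HomsBigA}. The only (cosmetic) difference is in the last step: the paper packages the comparison as the dimension identity $\dim\on{Hom}_{\Lambda\text{-mod-}\Lambda}(M,N)=\dim\on{Hom}_{A\text{-mod-}A}(\mathbf{Q}M,\mathbf{Q}N)+\dim\mathcal{I}(M,N)$ together with injectivity, whereas you identify the kernel of each induced surjection with the corresponding component of $\mathcal{I}$ directly, which amounts to the same verification.
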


 \begin{proof}
  $\mathcal{I}$ annihilates $\mathbf{C}^{\on{str}}$ by Lemma \ref{NilpotentAnn}, so, by construction, $\mathbf{Q}$ sends $\mathcal{I}$ to zero. There is thus an induced pseudofunctor $\widetilde{\mathbf{Q}}: \widetilde{\csym{B}}/\mathcal{I} \rightarrow \csym{B}$. It is essentially surjective, since $\mathbf{Q}$ is such. 
  Further, as a consequence of the calculations following Lemma~\ref{BiidealVer}, the cell birepresentation $\mathbf{C}$ is equivalent to the pullback birepresentation ${}_{\mathbf{Q}}A\!\on{-proj}$, where $A\!\on{-proj}$ has the structure of the defining birepresentation of $\csym{B}$. The latter is a faithful birepresentation (it is given by a locally faithful pseudofunctor) and so $\on{Ker}\mathbf{Q} = \mathcal{I}$. Indeed, if the kernel properly contained $\mathcal{I}$, its image under $\mathbf{Q}$ would give a non-zero biideal of $\csym{B}$ annihilating $A\!\on{-proj}$. Thus, $\widetilde{Q}$ is locally faithful.
  
  To see that it is also locally full, observe that, for bimodules $M,N \in \widetilde{\csym{B}}(\mathtt{i,i})$, using \eqref{HomsLambda},\eqref{BiidealCalc}, and \eqref{HomsBigA}, we obtain
  \[
   \on{dim}\on{Hom}_{\Lambda\!\on{-mod-}\Lambda}(M,N) = \on{dim}\on{Hom}_{A\!\on{-mod-}\!A}(\mathbf{Q}M, \mathbf{Q}N) + \on{dim}\mathcal{I}(M,N).
  \]
  The statement follows from the fact that an injective map of equidimensional finite dimensional vector spaces is an isomorphism.
 \end{proof}
\subsection{Prior results}
 Since $\mathcal{I}$ is nilpotent, Lemma \ref{NilpotentAnn} shows that simple transitive birepresentations of $\widetilde{\csym{B}}$ are the same as the simple transitive birepresentations of $\csym{B}$. Hence, all the analysis given in \cite{Zi} applies also if we make this replacement. On the level of the underlying $\Bbbk$-algebras, we replace $\Lambda$ by $A$. That the results of \cite{Zi} apply is even clearer if one observes the following:
 \begin{itemize}
  \item For $j \in [0,n]$, we have $e_{0}A e_{j} \simeq e_{0}\Lambda e_{j}$. Since $A$ is a quotient of $\Lambda$ by a nilpotent ideal, this implies that the multisemigroup given by composition of indecomposable $1$-morphisms is the same for $\widetilde{\csym{B}}$ and $\csym{B}$.
  \item  The module $A e_{0}$ is projective-injective, and so the bimodule $A e_{0} \otimes_{\Bbbk} e_{0}A$ gives a self-adjoint endofunctor of $A\!\on{-proj}$. 
  Following \cite[Remark 3.2]{Zi} one may apply \cite[Lemma 8]{MZ} to show that, for a simple transitive birepresentation $\mathbf{M}$ of $\csym{B}$, the functor $\mathbf{M}\mathrm{F}$ is a projective functor for any $1$-morphism $\mathrm{F}$.
 \end{itemize}
 Using these observations, one may prove \cite[Theorem 4.1, Theorem 5.1]{Zi} for $\csym{B}$ by verbatim repeating the proofs for $\widetilde{\csym{B}}$ given therein.
 
 From now on, the apex of any transitive birepresentation of $\csym{B}$ we consider is implicitly assumed to be the $J$-cell given by $\on{add}\setj{A e_{j} \otimes_{\Bbbk} e_{0}A \; | \; j \in [0,n]}$. Up to equivalence, there is a unique simple transitive birepresentation with apex given by $\on{add}\setj{{}_{A}A {}_{A}}$. This is a consequence of \cite[Theorem 18]{MM5}.
 
 Let $\mathfrak{P}$ be a set partition of a set $X$. We will associate a function to $\mathfrak{P}$, which, abusing notation, we denote by $\mathfrak{P}: X \rightarrow 2^{X}$. This function sends an element $x \in X$ to the subset $\mathfrak{P}(x)$ of $X$ it belongs to in the partition.
 \cite[Theorem 5.1]{Zi} implies that, for every simple transitive birepresentation $\mathbf{M}$ of $\csym{B}$, there is a set partition $\mathfrak{P}_{\mathbf{M}}$ of $[0,n]$ such that $[\mathbf{M}(Ae_{k} \otimes_{\Bbbk} e_{0}A)]_{\oplus} = [\mathbf{M}(Ae_{k'} \otimes_{\Bbbk} e_{0}A)]_{\oplus}$ if and only if $\mathfrak{P}(k) = \mathfrak{P}(k')$, and such that $\mathfrak{P}_{\mathbf{M}}(0) = \setj{0}$. 
 
 Completely in analogy to the proof of \cite[Theorem 4.1.]{Zi}, there is a strong transformation $\Sigma$ from the cell birepresentation $\mathbf{C}$ to $\mathbf{M}$, induced from the action of $\csym{B}$ on the simple top $L_{0}$ of the unique up to isomorphism object $Q_{0}$ of the essential image of $\overline{\mathbf{M}}(Ae_{0} \otimes_{\Bbbk} e_{0}A)$. Recall that $\mathbf{C}(\mathtt{i}) \simeq A\!\on{-proj}$, and, under that equivalence, we have $\mathbf{C}(Ae_{k} \otimes_{\Bbbk} e_{0}A) \simeq Ae_{k} \otimes_{\Bbbk} e_{0}A \otimes_{A} -$.
 
 For the remainder of this section, let $\mathbf{M}$ be a simple transitive birepresentation of $\csym{B}$, let $\Sigma$ be the above described strong transformation, and let $\mathfrak{P}_{\mathbf{M}}$ partition $[0,n]$ into $r+1$ subsets; recall that $\mathfrak{P}(0) = \setj{0}$. We now give a very short summary of the results of \cite{Zi} we will use.
 \begin{proposition}\label{JakobSummary}
  Given $k,k' \in [0,n]$, let $X,Y$ be indecomposable objects of $\mathbf{C}(\mathtt{i})$ belonging to the isomorphism classes of $A_{n}e_{k}, A_{n}e_{k'}$ in $A_{n}\!\on{-proj}$ under the identification $\mathbf{C}(\mathtt{i}) \simeq A_{n}\!\on{-proj}$ given in \eqref{CellIdentification}. 
  \begin{itemize}
   \item The objects $\Sigma(X), \Sigma(Y)$ are indecomposable, with $\Sigma(X) \simeq \Sigma(Y)$ if and only if $\mathfrak{P}_{\mathbf{M}}(k) = \mathfrak{P}_{\mathbf{M}}(k')$. 
   \item If $k=k'$ or $\mathfrak{P}(k) \neq \mathfrak{P}(k')$, then $\Sigma_{X,Y}$ gives a bijection of Hom-spaces. 
   \item As a consequence, if $U = \setj{u_{1},\ldots, u_{r}} \subseteq [0,n]$ is a transversal of $\mathfrak{P}_{\mathbf{M}}$, then the restriction of $\Sigma$ to $\on{add}\setj{A_{n}e_{k} \; | \; k \in U}$ is an equivalence of categories. In particular, $\mathbf{M}(\mathtt{i}) \simeq A_{r}\!\on{-proj}$.
  \end{itemize}
   
   We obtain an induced complete set of representatives of isomorphism classes of indecomposable objects of $\mathbf{M}(\mathtt{i})$, which we denote by $\setj{A_{r}^{\mathbf{M}}e_{\mathfrak{P}(u_{1})}, \ldots, A_{r}^{\mathbf{M}}e_{\mathfrak{P}(u_{r})}}$.
  Using $\mathbf{M}(\mathtt{i}) \simeq A_{r}\!\on{-proj}$ and identifying bimodules and endofunctors, we have
  \[
   \mathbf{M}(A_{n}e_{k} \otimes_{\Bbbk} e_{0}A_{n}) \simeq A_{r}^{\mathbf{M}}e_{\mathfrak{P}(u_{k})} \otimes_{\Bbbk} e_{\setj{0}}A_{r}^{\mathbf{M}}.
  \]
 \end{proposition}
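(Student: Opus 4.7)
The plan is to derive each claim by combining strong naturality of $\Sigma$ with Lemma~\ref{Preimages} and the results of \cite[Theorems 4.1 and 5.1]{Zi} recalled immediately before the proposition, which transfer verbatim to $\csym{B}$ by the observations opening this subsection.

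For the first bullet, indecomposability of $\Sigma(X)$ and $\Sigma(Y)$ is the direct restatement of the cited consequence of \cite[Theorem 4.1]{Zi}. The isomorphism criterion reduces to comparing action matrices: strong naturality of $\Sigma$ determines $[\Sigma(A e_k)]_\oplus$ from the relevant column of $[\mathbf{M}(A_n e_k \otimes_{\Bbbk} e_0 A_n)]_\oplus$, so an isomorphism of the indecomposables $\Sigma(A e_k) \simeq \Sigma(A e_{k'})$ is equivalent to equality of the action matrices $[\mathbf{M}(A_n e_k \otimes_{\Bbbk} e_0 A_n)]_\oplus = [\mathbf{M}(A_n e_{k'} \otimes_{\Bbbk} e_0 A_n)]_\oplus$, which by the recalled consequence of \cite[Theorem 5.1]{Zi} holds exactly when $\mathfrak{P}_{\mathbf{M}}(k) = \mathfrak{P}_{\mathbf{M}}(k')$.

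For the second bullet, the key step is faithfulness of $\Sigma$ on every $\on{Hom}$-space. By Lemma~\ref{Preimages}, the preimage under $\Sigma$ of the zero ideal of $\mathbf{M}$ is a $\csym{B}$-stable ideal of $\mathbf{C}$, which must be zero by simple transitivity of $\mathbf{C}$ together with $\Sigma$ being nonzero; this forces $\Sigma$ to be injective on $\on{Hom}$-spaces. Bijection in the stated cases follows from equidimensionality, with source $\on{Hom}$-dimensions read off from \eqref{HomsBigA} via the projection $\Lambda \twoheadrightarrow A$, and target $\on{Hom}$-dimensions extracted from the cell-datum analysis carried out in the proof of \cite[Theorem 5.1]{Zi}.

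The third bullet then follows by combining the first two: for a transversal $U$ of $\mathfrak{P}_{\mathbf{M}}$, the restriction of $\Sigma$ to $\on{add}\setj{A_n e_k \; | \; k \in U}$ is fully faithful by the second bullet (any two distinct elements of $U$ lie in different parts of $\mathfrak{P}_{\mathbf{M}}$) and essentially surjective by the first (the image on objects exhausts all isomorphism classes of indecomposables of $\mathbf{M}(\mathtt{i})$), hence an equivalence onto its additive closure, giving $\mathbf{M}(\mathtt{i}) \simeq A_r\!\on{-proj}$. The final bimodule identification of $\mathbf{M}(A_n e_k \otimes_{\Bbbk} e_0 A_n)$ then follows by transporting the known cell formula $\mathbf{C}(A_n e_k \otimes_{\Bbbk} e_0 A_n) \simeq A e_k \otimes_{\Bbbk} e_0 A \otimes_A -$ through this equivalence via strong naturality of $\Sigma$, with indices relabeled according to $\mathfrak{P}_{\mathbf{M}}$. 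The main obstacle is extracting the precise target $\on{Hom}$-dimensions of $\mathbf{M}(\mathtt{i})$ from the Grothendieck-group-level data in \cite[Theorem 5.1]{Zi}, where most of the computational work in the second bullet lies.
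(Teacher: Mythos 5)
Your outline gets the easy parts right: indecomposability can indeed be quoted from \cite{Zi}, and your faithfulness argument (apply Lemma~\ref{Preimages} to the kernel ideal $\Sigma^{-1}(0)$ and use simple transitivity of $\mathbf{C}$, noting that $\Sigma$ does not annihilate identity morphisms of nonzero objects) is correct and is a clean way to get injectivity of $\Sigma_{X,Y}$. The genuine gap is in the surjectivity half of the second bullet, which is exactly where the content of the proposition lies. You propose to conclude bijectivity by equidimensionality, with the target dimensions ``extracted from the Grothendieck-group-level data in \cite[Theorem 5.1]{Zi}''; but action matrices, i.e.\ data in the split Grothendieck group, do not determine the dimensions of $\on{Hom}$-spaces in $\mathbf{M}(\mathtt{i})$, so there is nothing to extract there, and you yourself flag this as ``the main obstacle'' without resolving it. The paper's own proof is a repetition (with modified indices) of the proof of \cite[Theorem 4.1]{Zi}, and the mechanism that actually produces the Cartan/$\on{Hom}$-dimension count there is the self-adjointness of $A_{n}e_{0} \otimes_{\Bbbk} e_{0}A_{n}$ (the only adjunction available, since $\csym{B}$ is not fiat), combined with the fact, recalled just before the proposition via \cite[Lemma 8]{MZ}, that $\mathbf{M}\mathrm{F}$ is a projective functor, and with the action matrices determined by $\mathfrak{P}_{\mathbf{M}}$. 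None of these ingredients enters your argument, so bullets two and three (and hence $\mathbf{M}(\mathtt{i}) \simeq A_{r}\!\on{-proj}$) are not established by your proposal.

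A secondary, smaller issue: for the final identification $\mathbf{M}(A_{n}e_{k} \otimes_{\Bbbk} e_{0}A_{n}) \simeq A_{r}^{\mathbf{M}}e_{\mathfrak{P}(u_{k})} \otimes_{\Bbbk} e_{\setj{0}}A_{r}^{\mathbf{M}}$, ``transporting the cell formula through strong naturality'' only matches the two functors on objects in the image of $\Sigma$; to upgrade this to an isomorphism of functors one again needs that $\mathbf{M}(A_{n}e_{k} \otimes_{\Bbbk} e_{0}A_{n})$ is a projective functor (then its isomorphism class is determined by its action matrix), which you should cite explicitly rather than leave implicit.
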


 \begin{proof}
  For $n=1$, all the claims follow from the proof of \cite[Theorem 4.1]{Zi}. The exact same proof works for $n > 1$ after a modification of indices, since all the claims follow from the self-adjointness of $A_{n}e_{0} \otimes_{\Bbbk} e_{0}A_{n}$, which is independent of $n$, and the action matrices, which are completely determined by $\mathfrak{P}_{\mathbf{M}}$.
 \end{proof}
 
 Since action matrices are an invariant of finitary birepresentations, the following statement can be concluded already from their characterization in \cite[Theorem 5.1]{Zi}, but it is even clearer in view of Proposition \ref{JakobSummary}:
 \begin{corollary}\label{InvarianceOfPartitions}
  Let $\mathbf{M}, \mathbf{M}'$ be simple transitive birepresentations of $\csym{B}$. If $\mathfrak{P}_{\mathbf{M}} \neq \mathfrak{P}_{\mathbf{M}'}$, then $\mathbf{M} \not\simeq \mathbf{M}'$.
 \end{corollary}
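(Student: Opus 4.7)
The plan is to exhibit the partition $\mathfrak{P}_{\mathbf{M}}$ as a direct invariant of $\mathbf{M}$ up to equivalence, by reading it off from isomorphism classes of the endofunctors $\mathbf{M}(Ae_{k} \otimes_{\Bbbk} e_{0}A)$. Concretely, I would first use Proposition \ref{JakobSummary} to establish the following intrinsic characterization: for $k,k' \in [0,n]$, one has $\mathfrak{P}_{\mathbf{M}}(k) = \mathfrak{P}_{\mathbf{M}}(k')$ if and only if $\mathbf{M}(Ae_{k} \otimes_{\Bbbk} e_{0}A) \simeq \mathbf{M}(Ae_{k'} \otimes_{\Bbbk} e_{0}A)$ as endofunctors of $\mathbf{M}(\mathtt{i})$. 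The forward direction is immediate from the presentation
\[
 \mathbf{M}(Ae_{k} \otimes_{\Bbbk} e_{0}A) \simeq A_{r}^{\mathbf{M}}e_{\mathfrak{P}(u_{k})} \otimes_{\Bbbk} e_{\setj{0}}A_{r}^{\mathbf{M}} \otimes_{A_{r}^{\mathbf{M}}} -
\]
supplied by Proposition \ref{JakobSummary}, since $\mathfrak{P}_{\mathbf{M}}(k) = \mathfrak{P}_{\mathbf{M}}(k')$ forces $u_{k}$ and $u_{k'}$ to lie in the same block, giving coinciding bimodules. The reverse direction uses that two bimodules of the form $N \otimes_{\Bbbk} e_{\setj{0}}A_{r}^{\mathbf{M}}$, with $N$ an indecomposable projective left $A_{r}^{\mathbf{M}}$-module, are isomorphic only when their left legs $N$ are isomorphic, which forces $\mathfrak{P}(u_{k}) = \mathfrak{P}(u_{k'})$, and hence $\mathfrak{P}_{\mathbf{M}}(k) = \mathfrak{P}_{\mathbf{M}}(k')$.

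Next, I would verify that this characterization is preserved under equivalence of birepresentations. Given an equivalence $\Theta: \mathbf{M} \xiso \mathbf{M}'$, each $1$-morphism $\mathrm{F}$ of $\csym{B}$ comes equipped with an invertible $2$-morphism $\mathbf{M}'\mathrm{F} \circ \Theta_{\mathtt{i}} \xiso \Theta_{\mathtt{i}} \circ \mathbf{M}\mathrm{F}$, and $\Theta_{\mathtt{i}}$ is an equivalence of categories. It follows that, for any two $1$-morphisms $\mathrm{F},\mathrm{F}'$, the relation $\mathbf{M}\mathrm{F} \simeq \mathbf{M}\mathrm{F}'$ holds if and only if $\mathbf{M}'\mathrm{F} \simeq \mathbf{M}'\mathrm{F}'$. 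Specializing to $\mathrm{F} = Ae_{k} \otimes_{\Bbbk} e_{0}A$ and $\mathrm{F}' = Ae_{k'} \otimes_{\Bbbk} e_{0}A$ and combining with the characterization of the previous paragraph yields $\mathfrak{P}_{\mathbf{M}} = \mathfrak{P}_{\mathbf{M}'}$, contradicting the hypothesis and proving the contrapositive.

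I do not anticipate any real obstacle; the argument is essentially a repackaging of Proposition \ref{JakobSummary}. The only minor step requiring mention is that distinct indecomposable projective left $A_{r}^{\mathbf{M}}$-modules, when tensored with the fixed non-zero right factor $e_{\setj{0}}A_{r}^{\mathbf{M}}$, produce non-isomorphic bimodules, which is routine. The alternative route mentioned in the paper, through action matrices, would proceed by noting that an equivalence of birepresentations conjugates all action matrices simultaneously and that equality of matrices is invariant under such conjugation; but the above approach, working directly with functor isomorphism classes, avoids any explicit choice of bases.
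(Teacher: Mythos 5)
Your proposal is correct and follows essentially the paper's own route: the paper treats the corollary as immediate from the fact that the data in Proposition \ref{JakobSummary} (equivalently, the action matrices of the $1$-morphisms $Ae_{k}\otimes_{\Bbbk}e_{0}A$) is an invariant of birepresentations up to equivalence, and you simply spell this out by reading $\mathfrak{P}_{\mathbf{M}}$ off the isomorphism classes of the functors $\mathbf{M}(Ae_{k}\otimes_{\Bbbk}e_{0}A)$ rather than their classes in the split Grothendieck group. The two small points you verify explicitly (non-isomorphic left legs give non-isomorphic projective bimodules, and equivalences of birepresentations preserve isomorphism of the values $\mathbf{M}\mathrm{F}$) are both correct and routine.
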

 
 In particular, when classifying simple transitive $2$-representations of $\csym{B}$, we may do so for each set partition of $[0,n]$ separately.
 
 \subsection{Equifying modifications}
 Since $\csym{B}$ admits a unique object $\mathtt{i}$, we denote the principal birepresentation $\mathbf{P}_{\mathtt{i}} = \csym{B}(\mathtt{i},-)$ simply by $\mathbf{P}$. It admits a transitive subbirepresentation $\mathbf{N}$, given by the subcategory $\on{add}\setj{Ae_{k} \otimes_{\Bbbk} e_{0}A \; | \; k \in [0,n]}$ of $\csym{B}(\mathtt{i,i})$.

 For any birepresentation $\mathbf{M}$ of $\csym{B}$, Yoneda lemma gives 
 \[
  \csym{B}\!\on{-afmod}(\mathbf{P}, \mathbf{M}) \simeq \mathbf{M}(\mathtt{i}).
 \]
 Given an object $X \in \mathbf{M}(\mathtt{i})$, we denote by $\Theta_{X}: \mathbf{P} \rightarrow \mathbf{M}$ the strong transformation that sends a $1$-morphism $\mathrm{F}$ to the object $\mathbf{M}\mathrm{F}(X)$. 
 Denote the strong transformation $\mathbf{N} \rightarrow \mathbf{P}$, given by inclusion, by $\Gamma$. Let 
 $\Theta_{j} := \Theta_{A_{n}e_{j} \otimes_{\Bbbk} e_{0}A_{n}}$ be the indicated strong transformation from $\mathbf{P}$ to $\mathbf{C}$. 
 The identification $\mathbf{C}(\mathtt{i}) \simeq A\!\on{-proj}$ of \eqref{CellIdentification} sends the object $Ae_{j} \otimes_{\Bbbk} e_{0}A$ of the quotient category to $Ae_{j}$. We may thus identify $\Theta_{j}$ with the functor $- \otimes_{A_{n}} A_{n}e_{j}$.
 
 \begin{lemma}\label{IsoActions}
  Given $j,k \in [1,n]$, there is an invertible modification 
  \[
   \euler{s}_{j,k}: \Theta_{j} \circ \Gamma \xiso \Theta_{k} \circ \Gamma.
  \]
 \end{lemma}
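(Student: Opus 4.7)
The plan is to construct $\euler{s}_{j,k}$ directly, exploiting the fact that $e_{0}Ae_{j}$ is one-dimensional for $j \in [1,n]$. Using the identification $\mathbf{C}(\mathtt{i}) \simeq A\!\on{-proj}$ from \eqref{CellIdentification} together with the description of $\Theta_{j}$ as $- \otimes_{A} Ae_{j}$, at an indecomposable $M = Ae_{l} \otimes_{\Bbbk} e_{0}A \in \mathbf{N}(\mathtt{i})$ one computes $(\Theta_{j} \circ \Gamma)_{\mathtt{i}}(M) = Ae_{l} \otimes_{\Bbbk} e_{0}Ae_{j}$ and $(\Theta_{k} \circ \Gamma)_{\mathtt{i}}(M) = Ae_{l} \otimes_{\Bbbk} e_{0}Ae_{k}$, both of which collapse to $Ae_{l}$ via their one-dimensional factors $\Bbbk b_{j}$ and $\Bbbk b_{k}$ by \eqref{HomsBigA}. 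I would then declare $\euler{s}_{j,k,\mathtt{i},M}$ to be the $A$-module isomorphism $v \otimes b_{j} \mapsto v \otimes b_{k}$, and extend additively over $\mathbf{N}(\mathtt{i})$. Invertibility of the resulting $\euler{s}_{j,k}$ is then immediate from the symmetric construction $\euler{s}_{k,j}$.

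For naturality in $M$, I would take a morphism $\alpha \otimes \beta \in e_{l}Ae_{l'} \otimes_{\Bbbk} e_{0}Ae_{0}$ between two indecomposables in $\mathbf{N}(\mathtt{i})$ via \eqref{CanonId}, and verify that applying $\Theta_{j} \circ \Gamma$ sends $v \otimes b_{j}$ to $v\alpha \otimes \beta b_{j}$. The central computation is that $\beta b_{j} = \lambda b_{j}$, where $\lambda$ is the coefficient of $e_{0}$ in $\beta = \lambda e_{0} + \mu c$; this holds because $c b_{j} = b_{m}a_{m} b_{j}$ vanishes in $A$ for every $m$, by the star-graph zero relations when $m \neq j$, and because $c_{j} = 0$ in $A$ when $m = j$. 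The same identity holds for $b_{k}$, so both legs of the naturality square produce $v\alpha \otimes \lambda b_{k}$.

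Finally, to check the modification condition, one must verify that $\euler{s}_{j,k}$ commutes with the coherence 2-cells of $\Theta_{j} \circ \Gamma$ and $\Theta_{k} \circ \Gamma$. Since $\Gamma$ is a strict inclusion its coherence cells are identities, so the relevant 2-cells come from the pseudofunctoriality isomorphisms $\mathfrak{f}_{\mathrm{F},M}$ of $\mathbf{C}$ evaluated at $Ae_{j} \otimes_{\Bbbk} e_{0}A$ and $Ae_{k} \otimes_{\Bbbk} e_{0}A$; under the tensor-product realization of $\mathbf{C}$ these are associators for $\otimes_{A}$. Because $\euler{s}_{j,k}$ acts only on the rightmost tensor factor while the associators rearrange parenthesization of the other factors, the required square commutes by naturality of the associator. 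The main obstacle I expect is the notational bookkeeping: threading the identification \eqref{CellIdentification} and the coherence cells of $\mathbf{C}$ through the verification makes the argument notationally heavy, even though each individual step is formal.
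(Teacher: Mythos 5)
Your construction of the components agrees with the paper's on the nose: on an indecomposable $Ae_{l}\otimes_{\Bbbk}e_{0}A$ the paper's map $(\euler{c}_{N}^{k})^{-1}\circ(\cdot a_{k})_{N}^{-1}\circ(\cdot a_{j})_{N}\circ\euler{c}_{N}^{j}$ is exactly your $v\otimes b_{j}\mapsto v\otimes b_{k}$, and your naturality computation (using $cb_{j}=0$ in $A$, so $\beta b_{j}=\lambda b_{j}$) is correct. The gap is in the last step, the modification axiom, which is the actual substance of the lemma. Your justification ``the required square commutes by naturality of the associator'' does not apply: naturality of $\mathfrak{a}$ would handle a $2$-cell of the form $\on{id}_{M}\otimes\on{id}_{N}\otimes h$ for some module map $h:Ae_{j}\rightarrow Ae_{k}$, but no nonzero such $h$ exists ($\on{Hom}_{A}(Ae_{j},Ae_{k})=e_{j}Ae_{k}=0$ for $j\neq k$); indeed this is precisely why $-\otimes_{A}Ae_{j}$ and $-\otimes_{A}Ae_{k}$ are not isomorphic on all of $\mathbf{P}$ and only become so after restriction along $\Gamma$. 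The square \eqref{ModifAxiom} requires the identity $\euler{s}_{M\otimes_{A}N}=(M\otimes_{A}\euler{s}_{N})$ transported along $\mathfrak{a}_{M,N,Ae_{j}}$, and this is an extra condition on the family of components, not a consequence of naturality of $\mathfrak{a}$.

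A related problem is that ``extend additively over $\mathbf{N}(\mathtt{i})$'' does not determine the component at the objects that actually occur in the axiom: $M\otimes_{A}N$ (e.g.\ $Ae_{m}\otimes_{\Bbbk}e_{0}Ae_{l}\otimes_{\Bbbk}e_{0}A$) is isomorphic to, but not equal to, a direct sum of your chosen representatives, so its component depends on a choice of decomposition, and it is exactly at these objects that compatibility with the left action must be checked. The paper avoids both issues at once by defining the component at \emph{every} $N\in\mathbf{N}(\mathtt{i})$ intrinsically through the right $A$-action, $\varphi_{N}^{j,k}=(\cdot a_{k})_{N}^{-1}\circ(\cdot a_{j})_{N}$ (using that $N_{A}\in\on{add}\setj{e_{0}A}$ makes $(\cdot a_{j})_{N}:Ne_{j}\rightarrow Nc$ invertible); then $\varphi_{M\otimes_{A}N}^{j,k}=M\otimes_{A}\varphi_{N}^{j,k}$ holds tautologically because right multiplication commutes with $M\otimes_{A}-$, and together with $\euler{c}_{M\otimes_{A}N}^{j}=(M\otimes_{A}\euler{c}_{N}^{j})\circ\mathfrak{a}_{M,N,Ae_{j}}$ this yields \eqref{ModifAxiom}. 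Your intuition that the map ``only touches the rightmost tensor factor'' is the right one, but to make it a proof you must define the components uniformly (as the paper does) and verify the compatibility with the $\csym{B}$-action directly, rather than appeal to associator naturality.
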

 \begin{proof}
  Under the identifications above, a modification $\euler{m}$ from $\Theta_{j} \circ \Gamma$ to $\Theta_{k} \circ \Gamma$ is given by a natural transformation
  $
   \euler{m}: - \otimes_{A} Ae_{j} \rightarrow - \otimes_{A} Ae_{k}
  $
  of functors from $\mathbf{N}(\mathtt{i}) = \on{add}\setj{A e_{k} \otimes_{\Bbbk} e_{0}A \; | \; k \in [0,n]}$ to $A\!\on{-proj}$, such that, for any $M,N \in \mathbf{N}(\mathtt{i})$, the diagram
 \begin{equation}\label{ModifAxiom}
  \begin{tikzcd}[column sep = large]
  \left(M \otimes_{A} N\right) \otimes_{A} Ae_{j} \arrow[r, "\euler{m}_{M \otimes_{A} N}"] \arrow[d, "\mathfrak{a}_{M,N,Ae_{j}}"] & \left(M \otimes_{A} N\right) \otimes_{A}Ae_{k} \arrow[d, "\mathfrak{a}_{M,N,Ae_{k}}"] \\
  M \otimes_{A} \left(N \otimes_{A} Ae_{j} \right) \arrow[r, "M \otimes_{A} \euler{m}_{N}"] & M \otimes_{A} \left(N \otimes_{A} Ae_{k}\right)
  \end{tikzcd}
 \end{equation}
 commutes, where $\mathfrak{a}$ denotes the associator. Let $\euler{c}_{M}^{j}: M \otimes_{A} Ae_{j} \xiso Me_{j}$ denote the canonical isomorphism given by $m \otimes a \mapsto ma$. Since $(M \otimes_{A} N)e_{j} = M \otimes_{A} Ne_{j}$, we have $\euler{c}_{M \otimes_{A} N}^{j} = (M \otimes_{A} \euler{c}_{N}^{j}) \circ \mathfrak{a}_{M,N,Ae_{j}}$. 
 
 For $N \in \mathbf{N}(\mathtt{i})$, the right $A$-module $N_{A}$ lies in $\on{add}\setj{e_{0}A}$, and thus, for $j \in [1,n]$, the map $Ne_{j} \xrightarrow{(\cdot a_{j})_{N}} Nc$ is an isomorphism of left $A$-modules, since it is such an isomorphism for $N = e_{0}A$. Let $\varphi_{N}^{j,k}$ denote the isomorphism $(\cdot a_{k})_{N}^{-1} \circ (\cdot a_{j})_{N}$. Since $\varphi_{N}^{j,k}$ is defined in terms of right action, we have $\varphi_{M \otimes_{A} N}^{j,k} = M \otimes_{A} \varphi_{N}^{j,k}$.
 We define $(\euler{s}_{j,k})_{N}$ as $(\euler{c}_{N}^{k})^{-1} \circ \varphi_{N}^{j,k} \circ \euler{c}_{N}^{j}$. From the earlier statements, it follows that the diagram
 \[
 \begin{tikzcd}[column sep = large, scale cd = 0.988]
  (M \otimes N) \otimes Ae_{j} \arrow[r, "\euler{c}_{M \otimes N}^{j}"] \arrow[d, "\mathfrak{a}_{M,N,Ae_{j}}"] & M \otimes Ne_{j} \arrow[r, "\varphi_{M \otimes N}^{j,k}"] \arrow[d, equal] & M \otimes Ne_{k} \arrow[r, "\left(\euler{c}_{M \otimes N}^{k}\right)^{-1}"] \arrow[d, equal] & (M \otimes N) \otimes Ae_{k} \arrow[d, "\mathfrak{a}_{M, N, Ae_{k}}^{-1}"] \\
  M \otimes (N \otimes Ae_{j}) \arrow[r, "M \otimes \euler{c}_{N}^{j}"] & M \otimes Ne_{j} \arrow[r, "M \otimes \varphi_{N}^{j,k}"] & M \otimes Ne_{k} \arrow[r, "M \otimes \left(\euler{c}_{N}^{k}\right)^{-1}"] & M \otimes (N \otimes Ae_{k})
 \end{tikzcd}
 \]
 commutes, which proves that $\euler{s}$ satisfies the axiom \eqref{ModifAxiom}. Further, $\euler{s}_{N}$ is natural in $N$, since it is defined in terms of the right $A$-action on $N$, which commutes with left $A$-module morphisms.
 \end{proof}

\begin{lemma}\label{IsoM}
 Let $i,i' \in [1,n]$ be such that $\mathfrak{P}_{\mathbf{M}}(i) = \mathfrak{P}_{\mathbf{M}}(i')$. The strong transformations $\Sigma \circ \Theta_{i}$ and $\Sigma \circ \Theta_{i'}$ are isomorphic.
\end{lemma}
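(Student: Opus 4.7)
The plan is to reduce the claim to the isomorphism of objects $\Sigma_\mathtt{i}(Ae_i) \simeq \Sigma_\mathtt{i}(Ae_{i'})$ provided by Proposition~\ref{JakobSummary}, using the Yoneda equivalence $\csym{B}\!\on{-afmod}(\mathbf{P},\mathbf{M}) \simeq \mathbf{M}(\mathtt{i})$ recorded at the start of the subsection. Since this is an equivalence of categories, it preserves and reflects isomorphisms, so it suffices to check that the two composites in question correspond to isomorphic objects of $\mathbf{M}(\mathtt{i})$.

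First I would identify which object of $\mathbf{M}(\mathtt{i})$ corresponds to $\Sigma \circ \Theta_j$ for each $j$. Under the identification $\mathbf{C}(\mathtt{i}) \simeq A\!\on{-proj}$ from \eqref{CellIdentification}, the strong transformation $\Theta_j = \Theta_{Ae_j \otimes_{\Bbbk} e_0 A}\colon \mathbf{P} \to \mathbf{C}$ corresponds to $Ae_j$. The component of the composite $\Sigma \circ \Theta_j \colon \mathbf{P} \to \mathbf{M}$ at a $1$-morphism $\mathrm{F}$ is $\Sigma_\mathtt{i}(\mathbf{C}\mathrm{F}(Ae_j))$, and the strong transformation structure of $\Sigma$ supplies an invertible $2$-morphism $\Sigma_\mathtt{i}(\mathbf{C}\mathrm{F}(Ae_j)) \xiso \mathbf{M}\mathrm{F}(\Sigma_\mathtt{i}(Ae_j))$, natural in $\mathrm{F}$. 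These pointwise isomorphisms assemble, by the coherence axioms for $\Sigma$ and the definition of horizontal composition of strong transformations, into an invertible modification $\Sigma \circ \Theta_j \xiso \Theta_{\Sigma_\mathtt{i}(Ae_j)}$. Hence, under Yoneda, $\Sigma \circ \Theta_j$ corresponds to $\Sigma_\mathtt{i}(Ae_j) \in \mathbf{M}(\mathtt{i})$.

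Applying this observation for $j = i$ and $j = i'$, and invoking the first bullet of Proposition~\ref{JakobSummary} --- which, under the hypothesis $\mathfrak{P}_\mathbf{M}(i) = \mathfrak{P}_\mathbf{M}(i')$, yields an isomorphism $\Sigma_\mathtt{i}(Ae_i) \simeq \Sigma_\mathtt{i}(Ae_{i'})$ in $\mathbf{M}(\mathtt{i})$ --- transporting this isomorphism back through the Yoneda equivalence delivers the desired invertible modification $\Sigma \circ \Theta_i \xiso \Sigma \circ \Theta_{i'}$. The only potential obstacle is the identification $\Sigma \circ \Theta_X \simeq \Theta_{\Sigma_\mathtt{i}(X)}$ used above; this is essentially the naturality of Yoneda in the target birepresentation, which follows formally from the coherence data of $\Sigma$, so it is only a matter of bookkeeping rather than a substantive difficulty. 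Lemma~\ref{IsoActions} is not needed for this argument, although it plays the analogous role for comparing the precomposites with $\Gamma$ in the non-isomorphic case addressed later.
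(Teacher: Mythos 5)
Your argument is correct and follows essentially the same route as the paper: the paper likewise evaluates both composites at the identity $1$-morphism $A$, obtains $\Sigma\circ\Theta_{i}(A)\simeq\Sigma(Ae_{i}\otimes e_{0}A)\simeq A_{r}^{\mathbf{M}}e_{\mathfrak{P}(i)}=A_{r}^{\mathbf{M}}e_{\mathfrak{P}(i')}\simeq\Sigma\circ\Theta_{i'}(A)$ from Proposition~\ref{JakobSummary}, and concludes by the Yoneda lemma applied to $\mathbf{P}$. Your extra step identifying $\Sigma\circ\Theta_{j}$ with $\Theta_{\Sigma_{\mathtt{i}}(Ae_{j})}$ just spells out the Yoneda bookkeeping that the paper leaves implicit.
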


\begin{proof}
 From the description of $\Sigma$ in Proposition \ref{JakobSummary}, we have 
 \[
\Sigma \circ \Theta_{i}(A) \simeq \Sigma(Ae_{i} \otimes e_{0}A) \simeq A_{r}^{\mathbf{M}}e_{\mathfrak{P}(i)} = A_{r}^{\mathbf{M}}e_{\mathfrak{P}(i')} \simeq \Sigma \circ \Theta_{i'}(A)
 \]
  The result follows by Yoneda lemma applied on $\mathbf{P}$.
\end{proof}

Similarly to the argument preceding \eqref{CellIdentification}, under the identification \eqref{CanonId}, the unique maximal $\csym{B}$-stable ideal of $\mathbf{N}$, corresponds to $A \otimes_{\Bbbk} \on{Rad}(e_{0}Ae_{0})$. Using the identification of $\Theta_{i}$ with $- \otimes_{A} Ae_{i}$, we see that $\Theta_{i}$ sends this ideal to zero, which implies that $\Theta_{i} \circ \Gamma$ factors canonically through the projection $\mathbf{N} \rightarrow \mathbf{C}$. 
Let $\widetilde{\Theta_{i} \circ \Gamma}: \mathbf{C} \rightarrow \mathbf{C}$ denote the resulting transformation. Since $\mathbf{C}$ is simple transitive, the strong transformation $\Sigma \circ \widetilde{\Theta_{i} \circ \Gamma}$ is faithful. Since $\Theta_{i}$ sends the isomorphism class represented by $Ae_{j} \otimes_{\Bbbk} e_{0}A$ to that represented by $Ae_{j}$, we see that $\Sigma \circ \widetilde{\Theta_{i} \circ \Gamma}$ sends indecomposables to indecomposables, as prescribed by $\mathfrak{P}_{\mathbf{M}}$. 
Thus, the underlying functor of $\Sigma \circ \Theta_{i} \circ \Gamma$ is determined by a faithful, $\Bbbk$-linear functor from $A_{n}\!\on{-proj}$ to $A_{r}\!\on{-proj}$, which maps indecomposable objects to indecomposable objects, as prescribed by $\mathfrak{P}_{\mathbf{M}}$. Let $\euler{T}$ denote the set of isomorphism classes of such functors. Recall that $\on{Rad}\on{End}_{A_{n}\!\on{-mod}}(A_{n}e_{0}) = \Bbbk\setj{c}$, independently of $n$ (abusing notation by identifying $c \in A_{n}$ for varying $n$). From the above description it follows that the restriction of a functor $F$ in $\euler{T}$ to that subspace corresponds to an endomorphism of $\Bbbk\setj{c}$, hence a scalar, which we denote by $\chi_{F}$.
\begin{lemma}\label{ZeroComponent}
 The map $\euler{T} \rightarrow \Bbbk\!\setminus\!\setj{0}$, sending $F$ to $\chi_{F}$, is a bijection. An automorphism $\tau$ of $F$ in $\euler{T}$ is determined uniquely by $\tau_{A_{n}e_{0}}$.
\end{lemma}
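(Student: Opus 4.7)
The plan is to reduce the lemma to a concrete scalar parametrization of $\euler{T}$, obtained from a presentation of $A_n\!\on{-proj}$ by generators and relations. From \eqref{HomsBigA} together with $A_n = \Lambda_n/J$, one reads off a $\Bbbk$-linear basis for each hom-space between the indecomposable projectives $A_n e_k$, $k \in [0,n]$: the identities $e_k$, the loop $c \in \on{End}(A_n e_0)$, and the arrows $a_i: A_n e_i \to A_n e_0$ and $b_i: A_n e_0 \to A_n e_i$ for $i \in [1,n]$. A direct calculation in the zigzag algebra (using $c = b_i a_i$ together with $c_i = 0$ in $A_n$) shows that the only nontrivial composition relations among these generators are $c^2 = 0$ and $a_i \circ b_i = c$; in particular, all other defined compositions of generators, such as $c \circ a_i$ and $b_i \circ c$, vanish. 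An analogous description holds for $A_r^{\mathbf{M}}\!\on{-proj}$, with generators $c', a_B', b_B'$ indexed by the blocks $B$ of $\mathfrak{P}_{\mathbf{M}}$.

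Using Proposition \ref{JakobSummary}, I may represent each $F \in \euler{T}$ by a functor sending $A_n e_k \mapsto A_r^{\mathbf{M}} e_{\mathfrak{P}(u_k)}$ on objects. Since the hom-spaces between the relevant indecomposables in $A_r^{\mathbf{M}}\!\on{-proj}$ are one-dimensional except for $\on{End}(A_r^{\mathbf{M}} e_0) = \Bbbk\setj{1,c'}$, and since $F(c)^2 = F(c^2) = 0$ forces $F(c)$ into the radical $\Bbbk\setj{c'}$, I obtain scalars $\chi_F, \mu_i, \nu_i$, all nonzero by faithfulness of $F$, with
\[
 F(c) = \chi_F c', \quad F(a_i) = \mu_i a_{\mathfrak{P}(u_i)}', \quad F(b_i) = \nu_i b_{\mathfrak{P}(u_i)}'.
\]
Functoriality applied to $a_i \circ b_i = c$, combined with $a_B' \circ b_B' = c'$ in the target, yields the single further constraint $\mu_i \nu_i = \chi_F$ for each $i \in [1,n]$; conversely, any nonzero scalars satisfying this constraint specify a well-defined element of $\euler{T}$.

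Surjectivity of $F \mapsto \chi_F$ is then immediate: given $\chi \in \Bbbk\!\setminus\!\setj{0}$, take $\mu_i := 1$ and $\nu_i := \chi$. For injectivity and the automorphism statement, I consider a natural isomorphism $\tau: F \xiso F'$ between elements of $\euler{T}$, and write $\tau_{A_n e_0} = s \cdot \on{id} + s' c'$ with $s \in \Bbbk\!\setminus\!\setj{0}$, $s' \in \Bbbk$, and $\tau_{A_n e_i} = t_i \in \Bbbk\!\setminus\!\setj{0}$ for $i \in [1,n]$. Naturality at $c$ gives $F'(c) \circ \tau_{A_n e_0} = \tau_{A_n e_0} \circ F(c)$, which after cancelling the $(c')^2 = 0$ terms reduces to $s \chi_{F'} c' = s \chi_F c'$, yielding $\chi_F = \chi_{F'}$. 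Naturality at $a_i$, using the vanishing of $c' \circ a_B'$ from the first paragraph, reduces to $\mu_i' t_i = s \mu_i$; in the automorphism case $F = F'$, this simplifies to $t_i = s$. Hence every component $\tau_{A_n e_i}$ with $i \geq 1$ is determined by the coefficient of the identity in $\tau_{A_n e_0}$, while $s'$ is already the coefficient of $c'$ in $\tau_{A_n e_0}$, which completes the proof. There is no real obstacle beyond careful bookkeeping; the key computational fact that makes each naturality square collapse is the vanishing of all compositions of generators other than $a_i \circ b_i$, established in the first paragraph.
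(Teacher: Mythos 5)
Your computations are correct, and your route is essentially the paper's: reduce to indecomposables, encode $F \in \euler{T}$ by scalars $\chi_F, \mu_i, \nu_i$ with $\mu_i\nu_i = \chi_F$, and exploit that the only nonzero compositions of generators are $a_i \circ b_i = c$ together with $c^2 = 0$. Your explicit surjectivity construction ($\mu_i = 1$, $\nu_i = \chi$) is a useful addition, since the paper leaves that direction implicit.

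There is, however, a genuine gap: injectivity of $F \mapsto \chi_F$ is never proved. What you establish under that name is the converse implication: if $F \simeq F'$, then naturality at $c$ forces $\chi_F = \chi_{F'}$. That only shows the map is well defined on the isomorphism classes making up $\euler{T}$. Injectivity is the statement that $\chi_F = \chi_{F'}$ forces $F \simeq F'$, i.e.\ that the remaining parameters $(\mu_i,\nu_i)$ and $(\mu_i',\nu_i')$ do not separate isomorphism classes; for this you must actually construct a natural isomorphism, not merely record constraints that a hypothetical one would satisfy. The construction is easy from your setup: take $\tau_{A_n e_0} = \on{id}$ and $\tau_{A_n e_i} = \mu_i/\mu_i'$ (your relation $\mu_i' t_i = s\mu_i$ with $s = 1$), but one must then verify the naturality square at $b_i$, which holds precisely because $\mu_i\nu_i = \mu_i'\nu_i'$, that is, because $\chi_F = \chi_{F'}$; the squares at $c$ and $a_i$ hold by construction, and no further squares arise since $\on{Hom}_{A_n\!\on{-proj}}(A_n e_i, A_n e_{i'}) = 0$ for distinct $i,i' \in [1,n]$. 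This sufficiency check is exactly the second half of the paper's proof (the choice $\sigma_i = \sigma_0\lambda_i/\lambda_i'$ and the verification that it defines a natural isomorphism), and without it the bijectivity claim is only half proved.
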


\begin{proof}
  As a consequence of Proposition \ref{IndecSuffice}, an additive functor of finitary categories is determined, up to natural isomorphism, by its restriction to the full subcategory of indecomposable objects. Further, a natural transformation between such functors is uniquely determined by its components indexed by indecomposable objects. 
  Since we do not consider or assume any monoidal or strict monoidal structure on our categories and functors, we may simplify further by replacing the domain and codomain categories by equivalent, skeletal categories $\mathcal{N}, \mathcal{R}$. We write $\on{Ob}\mathcal{N} = \setj{A_{n}e_{0},\ldots, A_{n}e_{n}}$ and $\on{Ob}\mathcal{R} = \setj{A_{r}e_{0},\ldots, A_{r}e_{r}}$. 
  Choose $F, F' \in \euler{T}$. Identifying $c \in A_{n}$ with $c \in A_{r}$, the map $F_{A_{n}e_{0},A_{n}e_{0}}$ corresponds to an algebra endomorphism of $\Bbbk[c]/\langle c^{2}\rangle$. 
  Since $F$ is faithful, it sends $c$ to $\chi_{F}c$ with $\chi_{F} \neq 0$. Assume $\chi_{F} \neq \chi_{F'}$, and let $\sigma: F \rightarrow F'$ be a natural transformation. We have $F(A_{n}e_{0}) = F'(A_{n}e_{0}) = A_{r}e_{0}$. We write $\sigma_{0}(e_{0}) = \sigma_{0}e_{0} + \sigma_{c}c$. Naturality implies the commutativity of
 \[
  \begin{tikzcd}
   A_{r}e_{0} \arrow[r, "c \mapsto \chi c"] \arrow[d, swap, "e_{0} \mapsto \sigma_{0}e_{0} + \sigma_{c}c"] & A_{r}e_{0} \arrow[d, "e_{0} \mapsto \sigma_{0}e_{0} + \sigma_{c}c"] \\
   A_{r}e_{0} \arrow[r, "c \mapsto \chi' c"] & A_{r}e_{0}
  \end{tikzcd}
 \]
 and $\sigma_{0} \neq 0$ implies $\chi = \chi'$. But $\sigma_{0} = 0$ implies $\sigma_{Ae_{0}} \in \on{Rad}\on{End}A_{r}e_{0}$, so $\sigma$ is not an isomorphism.
 
 For the remaining values of $i$, we have $F(A_{n}e_{i}) = A_{r}e_{\mathfrak{P}(i)} = F'(A_{n}e_{i})$, and we may write $F(a_{i}) = \lambda_{i}a_{\mathfrak{P}(i)}$ and $F(b_{i}) = \mu_{i}b_{\mathfrak{P}(i)}$, for $\Bbbk$-scalars $\mu_{i}, \lambda_{i}$. Functoriality gives $\lambda_{i}\mu_{i} = \chi_{F}$. Similarly for $F'$ and $\mu_{i}',\lambda_{i}'$. Further, a natural isomorphism $\sigma: F \rightarrow F'$ would yield a diagram
 \[
  \begin{tikzcd}[column sep = huge]
   A_{r}e_{\mathfrak{P}(i)} \arrow[r, "e_{\mathfrak{P}(i)} \mapsto \lambda_{i}a_{\mathfrak{P}(i)} "] \arrow[d, swap, "e_{\mathfrak{P}(i)} \mapsto \sigma_{i}e_{\mathfrak{P}(i)}"] & A_{r}e_{0} \arrow[d, "e_{\mathfrak{P}(i)} \mapsto \sigma_{0}e_{0} + \sigma_{c}c"] \\
   A_{r}e_{\mathfrak{P}(i)} \arrow[r, "e_{\mathfrak{P}(i)} \mapsto \lambda'_{i}a_{\mathfrak{P}(i)} "] & A_{r}e_{0}
  \end{tikzcd}
 \]
and so, for a fixed $\sigma_{0}$, we may set $\sigma_{i} = \frac{\sigma_{0}\lambda_{i}}{\lambda'_{i}}$. Given $i, i' \in [1,n]$ such that $i \neq i'$, the Hom-space $\on{Hom}_{A_{n}\!\on{-proj}}(A_{n}e_{i}, A_{n}e_{i'})$ is zero, so the commutativity of the above diagram, together with the commutativity of 
 \[
  \begin{tikzcd}[column sep = huge]
   A_{r}e_{\mathfrak{P}(i)} \arrow[from =r, swap, "e_{0} \mapsto \frac{\chi}{\lambda_{i}}b_{\mathfrak{P}(i)}"] \arrow[d, swap, "e_{\mathfrak{P}(i)} \mapsto \sigma_{i}e_{\mathfrak{P}(i)}"] & A_{r}e_{0} \arrow[d, "e_{\mathfrak{P}(i)} \mapsto \sigma_{0}e_{0} + \sigma_{c}c"] \\
   A_{r}e_{\mathfrak{P}(i)} \arrow[from =r, swap, "e_{0} \mapsto \frac{\chi}{\lambda_{i}}b_{\mathfrak{P}(i)}"] & A_{r}e_{0}
  \end{tikzcd}
 \]
 suffice to conclude that $\sigma_{i} = \frac{\sigma_{0}\lambda_{i}}{\lambda_{i}'}$ does define a natural isomorphism. In particular, we see that $\sigma$ is completely determined by its component indexed by $A_{n}e_{0}$. This concludes the proof.
\end{proof}

\begin{lemma}\label{ProjFunctorFact}
 Let $B$ be a finite-dimensional $\Bbbk$-algebra and let $F$ be an indecomposable projective endofunctor of $B\!\on{-mod}$. Let $M \in B\!\on{-mod}, f,f' \in \on{End}_{B}(M)$ and let $X$ be an indecomposable summand of $FM$ such that the restrictions of $Ff, Ff'$ to $X$ are automorphisms. There is $\lambda \in \Bbbk\!\setminus\!\setj{0}$ such that $Ff'_{|X} = \lambda Ff_{|X}$.
\end{lemma}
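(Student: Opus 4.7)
The plan rests on the explicit description of indecomposable projective endofunctors of $B\!\on{-mod}$: after possibly replacing $B$ by a Morita-equivalent basic algebra, any such $F$ is naturally isomorphic to $Be_i \otimes_{\Bbbk} e_jB \otimes_B -$ for some primitive idempotents $e_i, e_j$ of $B$. This gives $FM \cong Be_i \otimes_{\Bbbk} V$ with $V := e_jM$, and for any $h \in \on{End}_B(M)$ we have $Fh = \on{id}_{Be_i} \otimes g_h$ where $g_h := h|_V \in \on{End}_{\Bbbk}(V)$. After choosing a basis of $V$, the identification $FM \cong (Be_i)^{\dim V}$ induces $\on{End}_B(FM) \cong M_{\dim V}(e_iBe_i)$, and under this identification both $Ff$ and $Ff'$ lie in the subalgebra $M_{\dim V}(\Bbbk)$ of scalar matrices.

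Next I would exploit that any indecomposable summand $X$ of $FM$ is isomorphic to $Be_i$, so $\on{End}_B(X) \cong e_iBe_i$ is local with residue field $\Bbbk$ (using algebraic closedness and primitivity of $e_i$). The inclusion $\iota_X$ is determined by the element $Y := \iota_X(e_i) \in e_iFM \cong (e_iBe_i)\otimes V$; unimodularity of $Y$ ensures that its image $\bar Y \in V$ under the reduction modulo $\on{Rad}(e_iBe_i)$ is a nonzero vector. A direct calculation shows that the residue of $Fh|_X$ in $\on{End}_B(X)/\on{Rad} \cong \Bbbk$ equals the scalar $\mu_h$ by which $g_h$ acts on the line $\Bbbk\bar Y$. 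The automorphism hypothesis forces $\mu_f, \mu_{f'} \in \Bbbk\setminus\{0\}$, so setting $\lambda := \mu_{f'}/\mu_f$ yields an element $\lambda \in \Bbbk\setminus\{0\}$ such that $Ff'|_X - \lambda Ff|_X$ has vanishing residue, i.e.\ lies in the Jacobson radical of $\on{End}_B(X)$.

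The main obstacle, and the step I expect to be most delicate, is to upgrade this congruence modulo the radical to an actual equality in $\on{End}_B(X)$. For this I would argue that the restriction map $\on{End}_B(M) \to \on{End}_B(X)$, $h \mapsto Fh|_X$, factors through the scalar copy of $\Bbbk$ inside $\on{End}_B(X) \cong e_iBe_i$ --- which should follow from the scalar-matrix form of $Fh$ combined with the idempotent $\epsilon \in M_{\dim V}(e_iBe_i)$ cutting out $X$, provided that $\epsilon$ is chosen so that $X$ is \emph{aligned} with the line $\Bbbk\bar Y \subset V$ (which may be achieved by lifting a suitable basis of $V$ whose first vector is a representative of $\bar Y$). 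Once the restriction lands in $\Bbbk \cdot \on{id}_X$, the vanishing of the residue immediately yields the vanishing of $Ff'|_X - \lambda Ff|_X$ itself, completing the proof.
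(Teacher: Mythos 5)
Your reduction to $F = Be_{i} \otimes_{\Bbbk} e_{j}B \otimes_{B} -$, the identification $Fh = \on{id}_{Be_{i}} \otimes h_{|e_{j}M}$, and the computation of residues in $\on{End}_{B}(X)/\on{Rad} \cong \Bbbk$ are correct, and up to the congruence $Ff'_{|X} \equiv \lambda Ff_{|X} \pmod{\on{Rad}\on{End}_{B}(X)}$ you are on essentially the same track as the paper (which, assuming linear independence, uses locality of $\on{End}_{B}(X)$ to produce a nonzero radical combination $F(\mu f + \mu' f')_{|X}$). The gap is exactly at the step you flag. The claim that the map $h \mapsto Fh_{|X}$ factors through $\Bbbk \cdot \on{id}_{X}$ ``provided the idempotent is chosen aligned with $\Bbbk\bar{Y}$'' is not available: the summand $X$ is part of the given data and cannot be re-chosen, and an indecomposable summand of $Be_{i} \otimes_{\Bbbk} V$ need not be of the aligned form $Be_{i} \otimes_{\Bbbk} \Bbbk v$ with $v \in V$; lifting a basis of $V$ whose first vector represents $\bar{Y}$ produces an aligned decomposition of $FM$, but it does not move the given $X$ into it, and neither $X$ nor $Fh_{|X}$ changes under such a choice. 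Concretely, take $B = \Bbbk[x]/\langle x^{2}\rangle$, $F = B \otimes_{\Bbbk} B \otimes_{B} -$ (an indecomposable projective endofunctor), $M = {}_{B}B$, so that $FM \cong B \otimes_{\Bbbk} B$ and $F\rho_{c} = \on{id}_{B} \otimes \rho_{c}$, where $\rho_{c} \in \on{End}_{B}(M)$ is right multiplication by $c$. The submodule $X := B\cdot(x \otimes 1 + 1 \otimes x)$ is an indecomposable direct summand (a complement is $B \otimes 1$), every $F\rho_{c}$ preserves $X$, and under the isomorphism $X \cong B$, $u \mapsto u\cdot(x\otimes 1 + 1\otimes x)$, the restriction $F\rho_{c}{}_{|X}$ is again multiplication by $c$. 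So the restriction map lands onto all of $\on{End}_{B}(X)$, not in the scalars; taking $f = \on{id}$ and $f' = \rho_{1+x}$, both restrictions are automorphisms, they agree modulo the radical (as your argument predicts), yet they are not proportional over $\Bbbk$.

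This example satisfies every stated hypothesis of the lemma while the conclusion fails for this (non-aligned) summand, so the congruence modulo the radical is genuinely the most that can be extracted from the hypotheses as written: no refinement of your alignment argument can close the gap without imposing, or verifying, an extra compatibility of $X$ with the tensor decomposition. It is worth noting that the paper's own proof is terse at precisely the same spot: it deduces a contradiction from ``$Ff$ cannot be a radical morphism,'' which is a statement about $\on{Rad}\on{End}_{B}(FM)$ (a nonzero matrix with entries in $\Bbbk \subseteq e_{i}Be_{i}$ is not radical there), whereas what is needed is that the restriction to the given summand $X$ is not radical in $\on{End}_{B}(X)$, and the example shows these are different assertions. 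In the one place the lemma is used (Lemma~\ref{UniqueUpToScalar}), the summand arises as the image of a specific split monomorphism along which $Ff$ and $Ff'$ are simultaneously conjugated to the components $\euler{m}_{Ae_{0}\otimes_{\Bbbk}e_{0}A}$ and $\euler{m}'_{Ae_{0}\otimes_{\Bbbk}e_{0}A}$, and any complete argument has to exploit that extra structure (or an explicit alignment hypothesis on $X$) rather than the bare hypotheses. So your instinct about where the difficulty sits is exactly right, but the proposed fix does not work as stated.
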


\begin{proof}
 Clearly, if the statement holds for a functor $F$, it also holds for any functor $F'$ isomorphic to $F$. We may thus set $F = Be_{i} \otimes_{\Bbbk} e_{j}B \otimes_{B} -$ for some primitive idempotents $e_{i},e_{j} \in B$. Hence $FM = Be_{i} \otimes_{\Bbbk} e_{j}M \simeq Be_{i}^{\oplus \on{dim}e_{j}M}$, and $X \simeq Be_{i}$. For any $f \in \on{End}_{B}(M)$, the morphism $Ff$ maps $\Bbbk\setj{e_{i} \otimes v \; | \; v \in e_{j}M}$ to itself, and so $Ff$ cannot be a radical morphism. If $Ff_{|X}, Ff'_{|X}$ are linearly independent automorphisms of $X$, then, since the top of $X$ is simple, there is a linear combination 
 \[
  \mu Ff_{|X} + \mu' Ff'_{|X} = F(\mu f + \mu'f')_{|X} \in \on{Rad}\on{End}_{B}(X),
 \]
 which is a contradiction.
\end{proof}

\begin{lemma}\label{UniqueUpToScalar}
 Let $\euler{m}, \euler{m}': \Sigma \circ \Theta_{i} \circ \Gamma \rightarrow \Sigma \circ \Theta_{i} \circ \Gamma$ be a pair of invertible modifications. There is $\lambda \in \Bbbk\!\setminus\!\setj{0}$ such that $\euler{m}' = \lambda \euler{m}$.
\end{lemma}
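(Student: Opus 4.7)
The plan is to show that any invertible modification of $\Sigma \circ \Theta_i \circ \Gamma$ has component at $X \coloneqq Ae_0 \otimes_\Bbbk e_0 A$ equal to a nonzero scalar multiple of $\on{id}$, and then invoke Lemma \ref{ZeroComponent} to lift this to a global proportionality. The discussion preceding Lemma \ref{ZeroComponent} identifies the underlying functor of $\Sigma \circ \Theta_i \circ \Gamma$, via the projection $\mathbf{N}(\mathtt{i}) \twoheadrightarrow \mathbf{C}(\mathtt{i}) \simeq A_n\!\on{-proj}$, with a functor in $\euler{T}$; Lemma \ref{ZeroComponent} then asserts that any automorphism of such a functor is determined by its component at $A_n e_0$, corresponding to the component at $X$. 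Thus it suffices to establish proportionality at $X$.

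Unravelling the identifications: by Proposition \ref{JakobSummary} and $\mathfrak{P}_{\mathbf{M}}(0) = \setj{0}$, we have $\Phi(X) \simeq A_r^{\mathbf{M}}e_0$, where $\Phi \coloneqq \Sigma \circ \Theta_i \circ \Gamma$. This is an indecomposable projective with endomorphism algebra $e_0 A_r^{\mathbf{M}} e_0 = \Bbbk\setj{e_0, c}$ and $c^2 = 0$. Hence $\euler{m}_X = a\cdot\on{id} + b \cdot c$ for some $a \in \Bbbk^{\times}$, $b \in \Bbbk$; the claim is that $b = 0$.

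To prove the claim, I apply the modification axiom with $\mathrm{F} \coloneqq Ae_0 \otimes_\Bbbk e_0 A$ at $X$. Using $e_0 A e_0 = \Bbbk\setj{e_0, c}$, one has $\mathbf{N}\mathrm{F}(X) \simeq X \oplus X$, whence, by naturality of $\euler{m}$ on this decomposition, the left-hand side $\euler{m}_{\mathbf{N}\mathrm{F}(X)}$ corresponds to the diagonal $\left(\begin{smallmatrix} a + bc & 0 \\ 0 & a + bc \end{smallmatrix}\right)$ in $M_2(\Bbbk[c]/\langle c^2\rangle)$. On the right-hand side, unfolding $\mathbf{M}\mathrm{F}$ as tensoring by $A_r^{\mathbf{M}} e_0 \otimes_\Bbbk e_0 A_r^{\mathbf{M}}$ and using $c^2 = 0$ yields $\mathbf{M}\mathrm{F}(\euler{m}_X) = \left(\begin{smallmatrix} a & 0 \\ b & a \end{smallmatrix}\right)$ in the basis of $(A_r^{\mathbf{M}} e_0)^{\oplus 2} \simeq A_r^{\mathbf{M}} e_0 \otimes_\Bbbk e_0 A_r^{\mathbf{M}} e_0$ coming from $\setj{e_0, c}$. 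The modification axiom asserts these two matrices are conjugate by the invertible structure isomorphism $(\Phi_\mathrm{F})_X$. Since trace is conjugation-invariant over the commutative base $\Bbbk[c]/\langle c^2\rangle$, equating traces gives $2(a+bc) = 2a$, so $b = 0$ (char zero).

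Applying the same reasoning to $\euler{m}'$ yields $\euler{m}'_X = a' \cdot \on{id}$, and with $\lambda \coloneqq a'/a \in \Bbbk^{\times}$ we obtain $\euler{m}'_X = \lambda \euler{m}_X$, hence $\euler{m}' = \lambda \euler{m}$ by Lemma \ref{ZeroComponent}. The subtlest step is the computation of $\mathbf{M}\mathrm{F}(\euler{m}_X)$ in explicit matrix form; all other identifications follow directly from the structure of $\mathbf{M}$ on the apex established earlier. Alternatively, one could replace the trace argument by an application of Lemma \ref{ProjFunctorFact} to the projective functor $\mathbf{M}\mathrm{F}$, but the direct trace comparison seems cleaner here.
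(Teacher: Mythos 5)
Your argument is correct, and it shares the paper's skeleton: the reduction via Lemma \ref{ZeroComponent} to the component at $X = Ae_{0}\otimes_{\Bbbk}e_{0}A$, and the exploitation of the modification axiom for $\mathrm{F} = Ae_{0}\otimes_{\Bbbk}e_{0}A$ together with the splitting $\mathrm{F}\otimes_{A}\mathrm{F}\simeq \mathrm{F}^{\oplus 2}$, are exactly the paper's moves. Where you diverge is the endgame. The paper uses the split pair $(\iota,\pi)$ to show that $\mathbf{M}\mathrm{F}(\euler{m}_{X})$ restricted to a suitable indecomposable summand recovers $\euler{m}_{X}$, and then invokes Lemma \ref{ProjFunctorFact} to compare $\euler{m}_{X}$ and $\euler{m}'_{X}$; it never claims that either component is a scalar multiple of the identity. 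You instead compute $\mathbf{M}\mathrm{F}(\euler{m}_{X})$ explicitly (via Proposition \ref{JakobSummary}, so only up to natural isomorphism --- harmless, since everything you compare is invariant under conjugation) as $\left(\begin{smallmatrix} a & 0 \\ b & a\end{smallmatrix}\right)$, note that $\euler{m}_{\mathbf{N}\mathrm{F}(X)}$ is conjugate to $\operatorname{diag}(a+bc,\,a+bc)$ by additivity and naturality, and compare traces in $M_{2}(\Bbbk[c]/\langle c^{2}\rangle)$ to force $b=0$ (using $2\neq 0$ in $\Bbbk$). This yields the stronger intermediate statement that every invertible modification has component $a\cdot\operatorname{id}$ at $X$, which makes Lemma \ref{ProjFunctorFact} unnecessary; the price is the explicit matrix computation and the bookkeeping that both endomorphism rings are identified with $M_{2}(\Bbbk[c]/\langle c^{2}\rangle)$ only after choosing isomorphisms with $(A_{r}^{\mathbf{M}}e_{0})^{\oplus 2}$, under which the structure $2$-cell becomes an invertible matrix over the commutative base, so the trace comparison is legitimate. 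The paper's route, conversely, avoids any explicit computation of $\mathbf{M}\mathrm{F}(\euler{m}_{X})$ and works for any pair of invertible modifications without establishing scalarity. Both are valid; it would be worth stating explicitly in your write-up that $a\neq 0$ comes from invertibility of $\euler{m}$ and that the identifications are only up to conjugation.
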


\begin{proof}
 From Lemma \ref{ZeroComponent} and the discussion preceding it, we conclude that it suffices to show that $\euler{m}'_{A_{n}e_{0} \otimes_{\Bbbk} e_{0}A_{n}} = \lambda \euler{m}_{A_{n}e_{0} \otimes_{\Bbbk} e_{0}A_{n}}$. 
 
Since $\euler{m}$ is a modification, for every $1$-morphism $\mathrm{F} \in \csym{B}(\mathtt{i,i})$ and any object $X \in \mathbf{N}(\mathtt{i})$, we have
 \[
  \begin{tikzcd}
   \mathbf{M}(\mathrm{F}) \circ (\Sigma \circ \Theta_{i} \circ \Gamma)(X) \arrow[rr, "\mathbf{M}(\mathrm{F})\left((\euler{m})_{X}\right)"] \arrow[d, "(\Sigma \circ \Theta_{i} \circ \Gamma)_{X}"] & & \mathbf{M}(\mathrm{F}) \circ (\Sigma \circ \Theta_{i} \circ \Gamma)(X) \arrow[d, "(\Sigma \circ \Theta_{i} \circ \Gamma)_{X}"] \\ 
   (\Sigma \circ \Theta_{i} \circ \Gamma) \circ \mathbf{N}(\mathrm{F})(X) \arrow[rr, "(\euler{m})_{\mathbf{N}(\mathrm{F})(X)}"] & & (\Sigma \circ \Theta_{i} \circ \Gamma) \circ \mathbf{N}(\mathrm{F})(X)
  \end{tikzcd}
 \]
 and similarly for $\euler{m}'$.
 Let $\mathrm{F} = Ae_{0} \otimes_{\Bbbk} e_{0}A = X$. We have 
 \[
(Ae_{0} \otimes_{\Bbbk}e_{0}A)^{\otimes 2} := (Ae_{0} \otimes_{\Bbbk} e_{0}A) \otimes_{A} (Ae_{0} \otimes_{\Bbbk} e_{0}A) \simeq (Ae_{0} \otimes_{\Bbbk} e_{0}A)^{\oplus 2}.
 \]
 We fix a split monomorphism $\iota: Ae_{0} \otimes_{\Bbbk} e_{0}A \rightarrow (Ae_{0} \otimes_{\Bbbk} e_{0}A) \otimes_{A} (Ae_{0} \otimes_{\Bbbk} e_{0}A)$ and a split epimorphism 
 $\pi: (Ae_{0} \otimes_{\Bbbk} e_{0}A) \otimes_{A} (Ae_{0} \otimes_{\Bbbk} e_{0}A) \rightarrow Ae_{0} \otimes_{\Bbbk} e_{0}A$ such that $\pi \circ \iota = \on{id}_{Ae_{0} \otimes_{\Bbbk} e_{0}A}$. Due to the naturality in $X$, we obtain the commutative diagram
 \[
  \begin{tikzcd}[scale cd = 0.74, sep = 4ex]
   \mathbf{M}(Ae_{0} \otimes_{\Bbbk} e_{0}A) \circ (\Sigma \circ \Theta_{i} \circ \Gamma)(Ae_{0} \otimes_{\Bbbk} e_{0}A) \arrow[rrrr, "\mathbf{M}(Ae_{0} \otimes_{\Bbbk}e_{0}A)\left((\euler{m})_{Ae_{0} \otimes_{\Bbbk} e_{0}A}\right)"]  & & & & \mathbf{M}(Ae_{0} \otimes_{\Bbbk} e_{0}A) \circ (\Sigma \circ \Theta_{i} \circ \Gamma)(Ae_{0} \otimes_{\Bbbk} e_{0}A) \arrow[d, "(\Sigma \circ \Theta_{i} \circ \Gamma)_{Ae_{0} \otimes_{\Bbbk} e_{0}A}"] \\ 
   (\Sigma \circ \Theta_{i} \circ \Gamma)((Ae_{0} \otimes_{\Bbbk} e_{0}A)^{\otimes 2}) \arrow[rrrr, "\euler{m}_{(Ae_{0} \otimes_{\Bbbk} e_{0}A)^{\otimes 2}}"]  \arrow[u, "(\Sigma \circ \Theta_{i} \circ \Gamma)_{Ae_{0} \otimes_{\Bbbk} e_{0}A}^{-1}"] & & & &  (\Sigma \circ \Theta_{i} \circ \Gamma)((Ae_{0} \otimes_{\Bbbk} e_{0}A)^{\otimes 2}) \arrow[d, "(\Sigma \circ \Theta_{i} \circ \Gamma)(\pi)"] \\
    (\Sigma \circ \Theta_{i} \circ \Gamma)(Ae_{0} \otimes_{\Bbbk} e_{0}A) \arrow[rrrr, "\euler{m}_{Ae_{0} \otimes_{\Bbbk} e_{0}A}"] \arrow[u, "(\Sigma \circ \Theta_{i} \circ \Gamma)(\iota)"] & & & &  (\Sigma \circ \Theta_{i} \circ \Gamma)(Ae_{0} \otimes_{\Bbbk} e_{0}A) \\
  \end{tikzcd}
 \]
 Thus the restriction of $\mathbf{M}(Ae_{0} \otimes_{\Bbbk}e_{0}A)\left((\euler{m})_{Ae_{0} \otimes_{\Bbbk} e_{0}A}\right)$ to the indecomposable projective summand $\on{Im}\left((\Sigma \circ \Theta_{i} \circ \Gamma)_{Ae_{0} \otimes_{\Bbbk} e_{0}A}^{-1} \circ (\Sigma \circ \Theta_{i} \circ \Gamma)(\iota)\right)$ gives the automorphism $\euler{m}_{Ae_{0} \otimes_{\Bbbk} e_{0}A}$. 
 The same holds for $\euler{m}'$. The fact that  $\euler{m}'_{Ae_{0} \otimes_{\Bbbk} e_{0}A} = \lambda \euler{m}_{Ae_{0} \otimes_{\Bbbk} e_{0}A}$, for some non-zero $\lambda$, follows from Lemma \ref{ProjFunctorFact}.
\end{proof}

\begin{corollary}\label{Equification}
 Given $j,k \in [1,n]$ such that $\mathfrak{P}_{\mathbf{M}}(j) = \mathfrak{P}_{\mathbf{M}}(k)$, there is an invertible modification $\euler{t}_{j,k}: \Sigma \circ \Theta_{j} \rightarrow \Sigma \circ \Theta_{k}$ such that $\euler{t}_{j,k} \bullet \Gamma = \Sigma \bullet \euler{s}_{j,k}$, for $\euler{s}_{j,k}$ defined in Lemma \ref{IsoActions}.
\end{corollary}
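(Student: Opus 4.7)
The plan is to combine Lemma \ref{IsoM} with Lemma \ref{UniqueUpToScalar} via a rescaling argument. First, since $\mathfrak{P}_{\mathbf{M}}(j) = \mathfrak{P}_{\mathbf{M}}(k)$, Lemma \ref{IsoM} supplies some invertible modification $\euler{u}: \Sigma \circ \Theta_{j} \xrightarrow{\sim} \Sigma \circ \Theta_{k}$. Whiskering with $\Gamma$ produces an invertible modification $\euler{u} \bullet \Gamma: \Sigma \circ \Theta_{j} \circ \Gamma \xrightarrow{\sim} \Sigma \circ \Theta_{k} \circ \Gamma$. On the other hand, Lemma \ref{IsoActions} gives the invertible modification $\Sigma \bullet \euler{s}_{j,k}$ with the same source and target. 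Thus $\euler{u}$ is a candidate, and the only issue is that its whiskering along $\Gamma$ need not coincide with $\Sigma \bullet \euler{s}_{j,k}$ on the nose.

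To fix this, I consider the vertical composite
\[
 \euler{r} := (\euler{u} \bullet \Gamma)^{-1} \vcomp (\Sigma \bullet \euler{s}_{j,k}),
\]
which is an invertible endomodification of $\Sigma \circ \Theta_{j} \circ \Gamma$. Applying Lemma \ref{UniqueUpToScalar} to the pair $\on{id}_{\Sigma \circ \Theta_{j} \circ \Gamma}$ and $\euler{r}$ yields a nonzero scalar $\lambda \in \Bbbk$ with $\euler{r} = \lambda \cdot \on{id}_{\Sigma \circ \Theta_{j} \circ \Gamma}$.

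I then set $\euler{t}_{j,k} := \lambda \cdot \euler{u}$. This is still invertible since $\lambda \neq 0$ and $\euler{u}$ is invertible, and by $\Bbbk$-linearity of whiskering we get
\[
 \euler{t}_{j,k} \bullet \Gamma \;=\; \lambda \cdot (\euler{u} \bullet \Gamma) \;=\; (\euler{u} \bullet \Gamma) \vcomp \euler{r} \;=\; \Sigma \bullet \euler{s}_{j,k},
\]
as required. The entire argument is essentially bookkeeping: the real content sits in Lemmas \ref{IsoActions}, \ref{IsoM} and \ref{UniqueUpToScalar}, so no step here presents a genuine obstacle; the only subtle point is recognizing that ``uniqueness up to scalar'' of invertible endomodifications of $\Sigma \circ \Theta_{j} \circ \Gamma$ is exactly what is needed to rescale an arbitrary lift $\euler{u}$ of Lemma \ref{IsoM} into one whose $\Gamma$-whiskering is prescribed.
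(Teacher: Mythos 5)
Your proposal is correct and follows essentially the same route as the paper: take an arbitrary invertible modification from Lemma \ref{IsoM}, compare its $\Gamma$-whiskering with $\Sigma \bullet \euler{s}_{j,k}$ via Lemma \ref{UniqueUpToScalar}, and rescale by the resulting nonzero scalar. The only (harmless) cosmetic difference is that you first compose with $(\euler{u}\bullet\Gamma)^{-1}$ so as to apply Lemma \ref{UniqueUpToScalar} literally to endomodifications of $\Sigma\circ\Theta_{j}\circ\Gamma$, whereas the paper applies it directly to the parallel pair $\euler{t}'_{j,k}\bullet\Gamma$ and $\Sigma\bullet\euler{s}_{j,k}$.
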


\begin{proof}
 Using Lemma \ref{IsoM}, choose $\euler{t}'_{j,k}: \Sigma \circ \Theta_{j} \xiso \Sigma \circ \Theta_{k}$. From Lemma \ref{UniqueUpToScalar} it follows that there is a non-zero scalar $\lambda$ such that $t_{j,k} \bullet \Gamma = \lambda \Sigma \bullet \euler{s}_{j,k}$. The result follows by letting $\euler{t}_{j,k} := \frac{1}{\lambda} \euler{t}_{j,k}'$.
\end{proof}

\section{Simple transitive birepresentations of \texorpdfstring{$\csym{B}_{n}$}{Bn}}\label{s5}

We will use the theory of bicategorical weighted colimits to construct and classify simple transitive birepresentations of $\csym{B}$. The bicategories in which we will consider colimits are $\mathbf{Cat}_{\Bbbk}$ and the pseudofunctor bicategory $[\csym{B},\mathbf{Cat}_{\Bbbk}]_{\Bbbk}$. Observe that both these bicategories are, in fact, $2$-categories. This simplifies the notions below.

Let $\csym{C}$ be a $2$-category. 
Let $\csym{J}_{1}$ be the $2$-category given by $\begin{tikzcd}[sep = small] \bullet \arrow[r, shift right] \arrow[r, shift left] & \bullet \end{tikzcd}$. In particular, $\csym{J}_{1}$ has no non-identity $2$-morphisms. Let $\mathbf{F}$ be the $2$-functor $\csym{J}_{1} \rightarrow \csym{C}$ given by 
$\begin{tikzcd}[sep = small] \mathtt{i} \arrow[r, shift right, swap, "\mathrm{F}"] \arrow[r, shift left, "\mathrm{G}"] & \mathtt{j} \end{tikzcd}$, for objects $\mathtt{i,j}$ and $1$-morphisms $\mathrm{F,G}$ of $\csym{C}$.

Let $\mathbf{W}: \csym{J}_{1}^{\on{op}} \rightarrow \mathbf{Cat}$ be the $2$-functor given by the diagram $\begin{tikzcd} \euler{Iso} & \euler{1} \arrow[l, shift left, "P_{1}"] \arrow[l, shift right, swap, "P_{2}"] \end{tikzcd}$, where $\euler{1}$ denotes the terminal category with a unique object and only its identity morphism, and $\euler{Iso}$ is the walking isomorphism category, with two objects $P_{1}, P_{2}$ and, as its only non-identity morphisms, two mutually inverse  morphisms between the two objects.

The {\it bicategorical coisoinserter} of $\mathrm{F},\mathrm{G}$ above is the bicategorical colimit $\mathbf{W} \star \mathbf{F}$. One may verify that it is given by an object $\mathtt{w}$ of $\csym{C}$ together with a $1$-morphism $\mathrm{W}: \mathtt{j} \rightarrow \mathtt{w}$ and an invertible $2$-morphism $\zeta: \mathrm{W} \circ \mathrm{F} \xiso \mathrm{W} \circ \mathrm{G}$, such that, given a $1$-morphism $\mathrm{H}: \mathtt{j} \rightarrow \mathtt{k}$ and an invertible $2$-morphism $\gamma: \mathrm{HF} \xiso \mathrm{HG}$, there is a $1$-morphism $\widehat{\mathrm{H}}: \mathtt{w} \rightarrow \mathtt{k}$ and an invertible $2$-morphism $\widehat{\gamma}: \mathrm{H} \xiso \widehat{\mathrm{H}}\mathrm{W}$, such that the diagram
\begin{equation}\label{UniPropCoIso}
 \begin{tikzcd}
 \mathrm{HF} \arrow[d, "\gamma"] \arrow[r, "\widehat{\gamma}\bullet \mathrm{F}"] & (\widehat{\mathrm{H}}\mathrm{W})\mathrm{F} \arrow[r, equal] & \widehat{\mathrm{H}}(\mathrm{W}\mathrm{F}) \arrow[d, "\widehat{\mathrm{H}}\zeta"]\\
 \mathrm{HG} \arrow[r, "\widehat{\gamma}\bullet \mathrm{G}"] & (\widehat{\mathrm{H}}\mathrm{W})\mathrm{G} \arrow[r, equal] & \widehat{\mathrm{H}}(\mathrm{WG})
 \end{tikzcd}
\end{equation}
 commutes. Further, the pair $(\widehat{\mathrm{H}}, \widehat{\gamma})$ is unique up to an invertible $2$-morphism  compatible with $\widehat{\gamma}$. This is the one-dimensional aspect of the universal property of $(\mathtt{w},\mathrm{W},\zeta)$. In our applications we will not need the two-dimensional aspect, and hence we omit describing it.
 
 Let $\csym{J}_{2}$ be the $2$-category given by the diagram $
  \begin{tikzcd}
   \bullet \arrow[r, shift left=6pt, ""{name=U, near start, below}, ""{name = UR, near end,below}] \arrow[r, shift right=6pt, swap, ""{name=D, near start, above}, ""{name = DR, near end, above}] & \bullet
   \arrow[Rightarrow, from=U, to=D, swap, start anchor ={[yshift =3pt]}, end anchor = {[yshift=-3.8pt]}]
   \arrow[Rightarrow, from=UR, to=DR, start anchor ={[yshift =3pt]}, end anchor = {[yshift=-3.8pt]}]
  \end{tikzcd}
$
 Let $\mathbf{F}$ be the $2$-functor from $\csym{J}_{2}$ to $\csym{C}$ given by the diagram
$
  \begin{tikzcd}
   \mathtt{i} \arrow[r, shift left=6pt, "\mathrm{F}", ""{name=U, near start, below}, ""{name = UR, near end,below}] \arrow[r, shift right=6pt, "\mathrm{G}", swap, ""{name=D, near start, above}, ""{name = DR, near end, above}] & \mathtt{j}
   \arrow[Rightarrow, from=U, to=D, swap, "\alpha", start anchor ={[yshift =3pt]}, end anchor = {[yshift=-3.8pt]}]
   \arrow[Rightarrow, from=UR, to=DR, "\beta", start anchor ={[yshift =3pt]}, end anchor = {[yshift=-3.8pt]}]
  \end{tikzcd}
$
in $\csym{C}$. Let $\mathbf{W}: \csym{J}_{2}^{\on{op}} \rightarrow \mathbf{Cat}$ be the $2$-functor given by the diagram
$
  \begin{tikzcd}
   \euler{Arr} & \euler{1} \arrow[l, shift left=6pt, ""{name=D, near start, below}, "Q_{2}", ""{name = DR, near end,below}] \arrow[l, shift right=6pt, "Q_{1}", swap, ""{name=U, near start, above}, ""{name = UR, near end, above}] 
   \arrow[Rightarrow, from=U, to=D, swap, start anchor ={[yshift =-1pt]}, end anchor = {[yshift=1.8pt]}]
   \arrow[Rightarrow, from=UR, to=DR, start anchor ={[yshift =-1pt]}, end anchor = {[yshift=1.8pt]}]
  \end{tikzcd}
$,
where $\euler{Arr}$ is the walking arrow category, with two objects $Q_{1}, Q_{2}$ and a unique morphism $Q_{1} \rightarrow Q_{2}$. In particular, the two $2$-cells in the last diagram coincide.

The {\it bicategorical coequifier} of $\alpha$ and $\beta$ above is the colimit $\mathbf{W} \star \mathbf{F}$. One may verify that it is given by an object $\mathtt{r}$ together with a $1$-morphism $\mathrm{R}: \mathtt{j} \rightarrow \mathtt{r}$ such that there are equivalences 
\[
 \csym{C}(\mathtt{r},\mathtt{k}) \xrightarrow{\circ \mathrm{R}} \csym{C}(\mathtt{j},\mathtt{k})^{\on{eqf}},
\]
strongly natural in $\mathtt{k}$. Here, $\csym{C}(\mathtt{j},\mathtt{k})^{\on{eqf}}$ denotes the full subcategory of $\csym{C}(\mathtt{j},\mathtt{k})$ given by $1$-morphisms $\mathrm{H}$ such that $\mathrm{H}\alpha = \mathrm{H}\beta$.

Let $\mathfrak{P}$ be a set partition of $[0,n]$ satisfying $\mathfrak{P}(0) = \setj{0}$, given by 
\[
 [0,n] = \setj{0} \sqcup \setj{i_{1}^{1}, i_{2}^{1},\ldots, i_{k_{1}}^{1}} \sqcup \cdots \sqcup \setj{i_{1}^{r}, i_{2}^{r},\ldots, i_{k_{r}}^{r}}.
\]
Similarly to the notation $[1,k] = \setj{1,\ldots,k}$, we use the interval notation for subsets given by $\mathfrak{P}$, writing for example $[i_{1}^{2}, i_{3}^{2}] := \setj{i_{1}^{2}, i_{2}^{2}, i_{3}^{2}}$.

We associate particular kinds of weighted colimits to $\mathfrak{P}$. Let $\csym{J}_{n}$ be the $2$-category
$\begin{tikzcd}
    \bullet
    \arrow[r, draw=none, "\raisebox{+1.5ex}{\vdots}" description]
    \arrow[r, bend left]
    \arrow[r, bend right]
    &
    \bullet
\end{tikzcd}$ with $n$ parallel $1$-morphisms and no non-identity $2$-morphisms. Let $\mathbf{W}(\mathfrak{P})$ be the $2$-functor from $\csym{J}_{n}$ to $\mathbf{Cat}$ given by 
$\begin{tikzcd}
    \euler{Iso}(\mathfrak{P}) & \euler{1}
    \arrow[l, draw=none, "\raisebox{+1.5ex}{\vdots}" description]
    \arrow[l, bend left, "i_{k_{r}}^{r}"]
    \arrow[l, bend right, swap, "i_{1}^{1}"]
\end{tikzcd}$
where $\euler{Iso}(\mathfrak{P})$ is the category presented by
\[
 \begin{tikzcd}[row sep = 3ex]
  i_{1}^{1} \arrow[rr, shift left, "\psi^{1}_{1}"] & & i_{2}^{1} \arrow[ll, shift left, "(\psi^{1}_{1})^{-1}"] \arrow[rr, shift left, "\psi_{2}^{1}"] & & \cdots \arrow[ll, shift left, "(\psi_{2}^{1})^{-1}"] \arrow[rr, shift left, "\psi_{k_{1}-1}^{1}"] & & i_{k_{1}}^{1} \arrow[ll, shift left, "(\psi_{k_{1}-1}^{1})^{-1}"] \\
  i_{1}^{2} \arrow[rr, shift left, "\psi^{2}_{1}"] & & i_{2}^{2} \arrow[ll, shift left, "(\psi^{2}_{1})^{-1}"] \arrow[rr, shift left, "\psi_{2}^{2}"] & & \cdots \arrow[ll, shift left, "(\psi_{2}^{2})^{-1}"] \arrow[rr, shift left, "\psi_{k_{2}-1}^{2}"] & & i_{k_{2}}^{2} \arrow[ll, shift left, "(\psi_{k_{2}-1}^{2})^{-1}"] \\
  \vdots & \vdots & & \vdots & & \vdots & \vdots \\
  i_{1}^{r} \arrow[rr, shift left, "\psi^{r}_{1}"] & & i_{2}^{r} \arrow[ll, shift left, "(\psi^{r}_{1})^{-1}"] \arrow[rr, shift left, "\psi_{2}^{r}"] & & \cdots \arrow[ll, shift left, "(\psi_{2}^{r})^{-1}"] \arrow[rr, shift left, "\psi_{k_{r}-1}^{r}"] & & i_{k_{r}}^{r} \arrow[ll, shift left, "(\psi_{k_{r}-1}^{r})^{-1}"] \\
 \end{tikzcd}
\]

We now return to the setting of the bicategory $\csym{B}_{n}$ studied in the previous sections. $\mathbf{F}^{\mathtt{W}}(\mathfrak{P})$ be the $2$-functor from $\csym{J}_{n}$ to $[\csym{B}_{n},\mathbf{Cat}_{\Bbbk}]_{\Bbbk}$ given by the diagram
\[
\begin{tikzcd}
    \mathbf{P}
    \arrow[r, draw=none, "\raisebox{+1.5ex}{\vdots}" description]
    \arrow[r, bend left, "\Theta_{i_{1}^{1}}"]
    \arrow[r, bend right, swap, "\Theta_{i_{k_{r}}^{r}}"]
    &
    \mathbf{C}
\end{tikzcd}
\]
The colimit $\mathbf{W}(\mathfrak{P})\star \mathbf{F}^{\mathtt{W}}(\mathfrak{P})$ can be obtained by iterating coisoinserters, and is given by a $\Bbbk$-linear pseudofunctor $\mathbf{C}^{\mathtt{W}}(\mathfrak{P})$, a strong transformation $\Omega^{\mathtt{W}}(\mathfrak{P})$ and a set of invertible modifications $\setj{\widehat{\euler{x}}_{l}^{m}: \Omega^{\mathtt{W}}(\mathfrak{P}) \circ \Theta_{i_{l}^{m}} \xiso \Omega^{\mathtt{W}}(\mathfrak{P}) \circ \Theta_{i_{l+1}^{m}} \; | \; l \in [i_{1}^{m},i_{k_{m}-1}^{m}], m \in [1,r]}$. 

The one-dimensional property is analogous to that of coisoinserters given above: given a $\Bbbk$-linear pseudofunctor $\mathbf{M}$, a strong transformation $\Upsilon: \mathbf{C} \rightarrow \mathbf{M}$ and a family of modifications $\euler{n}_{l}^{m}$ indexed as above, there is a $1$-morphism $\widehat{\Upsilon}: \mathbf{C}^{\mathtt{W}}(\mathfrak{P})$ such that diagrams analogous to \ref{UniPropCoIso} commute for all choices of indices. In particular we have $\widehat{\Upsilon} \circ \Omega^{\mathtt{W}}(\mathfrak{P}) \simeq \Upsilon$.

Recall the birepresentation $\mathbf{N}$ of $\csym{B}_{n}$, the strong transformation $\Gamma: \mathbf{N} \rightarrow \mathbf{P}$ and the invertible modifications $\euler{s}_{j,k}: \Theta_{j} \circ \Gamma \xiso \Theta_{k} \circ \Gamma$, for $j,k \in [1,n]$. 
For any $m \in [1,r]$ and any $l \in [i_{1}^{m},i_{k_{m}-1}^{m}]$ we now have the parallel invertible modifications $\widehat{\euler{x}}_{l}^{m} \bullet \Gamma$ and $\Omega^{\mathtt{W}}(\mathfrak{P}) \bullet \euler{s}_{i_{l}^{m},i_{l+1}^{m}}$, from $\Omega^{\mathtt{W}}(\mathfrak{P}) \circ \Theta_{i_{l}^{m}} \circ \Gamma$ to $\Omega^{\mathtt{W}}(\mathfrak{P}) \circ \Theta_{i_{l+1}^{m}} \circ \Gamma$. 

We consider the multiple coequifier $\mathbf{C} \xrightarrow{\Omega^{\mathtt{R}}(\mathfrak{P})} \mathbf{C}^{\mathtt{WR}}(\mathfrak{P})$ coequifying all such pairs. In other words, we obtain equivalences 
\[
[\csym{B}_{n},\mathbf{Cat}_{\Bbbk}](\mathbf{C}^{\mathtt{WR}}(\mathfrak{P}),\mathbf{M}) \xrightarrow{\circ \Omega^{\mathtt{R}}(\mathfrak{P})} [\csym{B}_{n},\mathbf{Cat}_{\Bbbk}]_{\Bbbk}^{\on{eqf}}(\mathbf{C}^{\mathtt{W}}(\mathfrak{P}), \mathbf{M}),
\]
 strongly natural in $\mathbf{M} \in [\csym{B}_{n}, \mathbf{Cat}_{\Bbbk}]_{\Bbbk}$, where $[\csym{B}_{n},\mathbf{Cat}_{\Bbbk}]_{\Bbbk}^{\on{eqf}}(\mathbf{C}^{\mathtt{W}}(\mathfrak{P}), \mathbf{M})$ is the category of strong transformations from $\mathbf{C}^{\mathtt{W}}(\mathfrak{P})$ to $\mathbf{M}$ which, upon whiskering, coequify all the parallel pairs above. We write $\euler{x}_{l}^{m} = \Omega^{\mathtt{R}}(\mathfrak{P})\bullet \widehat{\euler{x}}_{l}^{m}$ and $\Omega^{\mathtt{WR}}(\mathfrak{P}) := \Omega^{\mathtt{R}}\circ \Omega^{\mathtt{W}}$.

\begin{lemma}\label{SoughtUni}
 Let $\mathbf{M} \in [\csym{B}_{n},\mathbf{Cat}_{\Bbbk}]_{\Bbbk}$. 
 Given a strong transformation $\mathbf{C} \xrightarrow{\Upsilon} \mathbf{M}$ together with invertible modifications $\euler{m}_{l}^{m}: \Upsilon \circ \Theta_{i_{l}^{m}} \rightarrow \Upsilon \circ \Theta_{i_{l+1}^{m}}$, for all $m \in [1,r]$ and for all $l \in [i_{1}^{m}, i_{k_{m}-1}^{m}1]$, such that $\euler{m}_{l}^{m} \bullet \Gamma = \Upsilon \bullet \euler{s}_{i_{l}^{m},i_{l+1}^{m}}$, there is a strong transformation $\widetilde{\Upsilon}: \mathbf{C}^{\mathtt{WR}} \rightarrow \mathbf{M}$ such that $ \widetilde{\Upsilon} \circ \Omega^{\mathtt{WR}} \simeq \Upsilon$. 
 Further, $\widetilde{\Upsilon}$ is unique up to an invertible modification.
\end{lemma}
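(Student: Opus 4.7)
The plan is to apply, in sequence, the universal properties of the two colimits used to build $\mathbf{C}^{\mathtt{WR}}(\mathfrak{P})$: first the iterated coisoinserter $\mathbf{C} \xrightarrow{\Omega^{\mathtt{W}}(\mathfrak{P})} \mathbf{C}^{\mathtt{W}}(\mathfrak{P})$, and then the coequifier $\mathbf{C}^{\mathtt{W}}(\mathfrak{P}) \xrightarrow{\Omega^{\mathtt{R}}(\mathfrak{P})} \mathbf{C}^{\mathtt{WR}}(\mathfrak{P})$. The data $(\Upsilon, \euler{m}_l^m)$ is precisely the data needed to apply the one-dimensional aspect of the universal property of $\mathbf{C}^{\mathtt{W}}(\mathfrak{P})$ (namely, the bicategorical coisoinserter of the family $\setj{\Theta_{i_l^m}}$): we obtain a strong transformation $\widehat{\Upsilon}: \mathbf{C}^{\mathtt{W}}(\mathfrak{P}) \to \mathbf{M}$ together with an invertible modification $\euler{p}: \widehat{\Upsilon} \circ \Omega^{\mathtt{W}}(\mathfrak{P}) \xiso \Upsilon$, and such that the analogue of diagram \eqref{UniPropCoIso} commutes, matching $\widehat{\Upsilon} \bullet \widehat{\euler{x}}_l^m$ with $\euler{m}_l^m$ through $\euler{p}$.

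Next I would check that $\widehat{\Upsilon}$ satisfies the coequification condition required to factor through the coequifier $\Omega^{\mathtt{R}}(\mathfrak{P})$. Whiskering on the left by $\widehat{\Upsilon}$, we obtain, using the compatibility modification $\euler{p}$ and the interchange/associativity of horizontal composition,
\[
 \widehat{\Upsilon} \bullet \bigl(\widehat{\euler{x}}_l^m \bullet \Gamma\bigr) \;\simeq\; \euler{m}_l^m \bullet \Gamma, \quad
 \widehat{\Upsilon} \bullet \bigl(\Omega^{\mathtt{W}}(\mathfrak{P}) \bullet \euler{s}_{i_l^m, i_{l+1}^m}\bigr) \;\simeq\; \Upsilon \bullet \euler{s}_{i_l^m, i_{l+1}^m}.
\]
The hypothesis $\euler{m}_l^m \bullet \Gamma = \Upsilon \bullet \euler{s}_{i_l^m, i_{l+1}^m}$ then shows that $\widehat{\Upsilon}$ belongs to the equification subcategory $[\csym{B}_n, \mathbf{Cat}_{\Bbbk}]_{\Bbbk}^{\on{eqf}}(\mathbf{C}^{\mathtt{W}}(\mathfrak{P}), \mathbf{M})$. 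The one-dimensional aspect of the universal property of the coequifier $\mathbf{C}^{\mathtt{WR}}(\mathfrak{P})$ then supplies the desired $\widetilde{\Upsilon}: \mathbf{C}^{\mathtt{WR}}(\mathfrak{P}) \to \mathbf{M}$ together with an invertible modification $\widetilde{\Upsilon} \circ \Omega^{\mathtt{R}}(\mathfrak{P}) \xiso \widehat{\Upsilon}$, which, combined with $\euler{p}$, yields $\widetilde{\Upsilon} \circ \Omega^{\mathtt{WR}}(\mathfrak{P}) \simeq \Upsilon$ as required.

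Uniqueness up to invertible modification is then obtained by chaining the uniqueness clauses of the two universal properties: given two such factorizations $\widetilde{\Upsilon}_1, \widetilde{\Upsilon}_2$, precomposition with $\Omega^{\mathtt{R}}(\mathfrak{P})$ yields two factorizations of $\Upsilon$ through $\Omega^{\mathtt{W}}(\mathfrak{P})$ compatible with the modifications $\widehat{\euler{x}}_l^m$, so by the coisoinserter's uniqueness clause there is an invertible modification $\widetilde{\Upsilon}_1 \circ \Omega^{\mathtt{R}}(\mathfrak{P}) \xiso \widetilde{\Upsilon}_2 \circ \Omega^{\mathtt{R}}(\mathfrak{P})$; the coequifier's uniqueness (two-dimensional aspect), which in our $2$-categorical setting follows from fully faithfulness of precomposition by $\Omega^{\mathtt{R}}(\mathfrak{P})$ on the relevant subcategories, lifts this to an invertible modification $\widetilde{\Upsilon}_1 \xiso \widetilde{\Upsilon}_2$.

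The main obstacle I expect is bookkeeping rather than mathematical depth: the coisoinserter yields its factorization only up to the comparison modification $\euler{p}$, so the equalities phrased as $\euler{m}_l^m \bullet \Gamma = \Upsilon \bullet \euler{s}_{i_l^m, i_{l+1}^m}$ must be transported through $\euler{p}$ into the coequification condition for $\widehat{\Upsilon}$, which amounts to a diagram chase using the interchange law and naturality of $\euler{p}$. This is the point where one uses Proposition \ref{OneDimAspect} most visibly, since it guarantees that the two-dimensional aspect of the universal property (needed for the uniqueness clause) is already implied by the one-dimensional aspect we have been invoking.
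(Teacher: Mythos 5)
Your proposal is correct and takes essentially the same route as the paper: the paper's proof consists of one sentence asserting that the lemma follows from the one-dimensional aspect of the universal property of $(\mathbf{C}^{\mathtt{WR}}(\mathfrak{P}),\Omega^{\mathtt{WR}}(\mathfrak{P}),\euler{x}_{l}^{m})$ obtained by combining the universal properties of the iterated coisoinserter and the coequifier, which is exactly the two-step factorization (coisoinserter, then verification of the coequification condition via transport through the comparison modification, then coequifier) that you spell out. Your extra bookkeeping, including the interchange-law argument showing $\widehat{\Upsilon}$ lands in the equifying subcategory and the chaining of the uniqueness clauses, is a legitimate expansion of what the paper leaves implicit.
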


\begin{proof}
 The statement follows from the one-dimensional aspect of the universal property of $(\mathbf{C}^{\mathtt{WR}}(\mathfrak{P}),\Omega^{\mathtt{WR}}(\mathfrak{P}), \euler{x}_{l}^{m})$ obtained by combining the universal properties of the iterated coisoinserter $\mathbf{C}^{\mathtt{W}}(\mathfrak{P})$ and the coequifier $\mathbf{C}^{\mathtt{R}}(\mathfrak{P})$.
 Note that the statement of the lemma does not exhaust the one-dimensional universal property.
\end{proof}

We now describe the underlying category for  $\mathbf{C}^{\mathtt{W}}(\mathfrak{P})$. 
Since it is constructed pointwise, it suffices to find the coisoinserter of the underlying diagram in $\mathbf{Cat}_{\Bbbk}$. We will only determine the underlying category up to equivalence, and thus, using Proposition \ref{Biadjunction}, Corollary \ref{SummaryCoCo}, and Proposition \ref{IndecSuffice} we may restrict the domain category $\mathbf{P}(\mathtt{i})$ to its full subcategory $\mathbf{P}(\mathtt{i})\!\on{-indec}$ of indecomposable objects. 
We may also restrict the codomain to the common essential image of the underlying functors of $\Theta_{i}$, for $i \in [1,n]$, which is exactly $\mathbf{C}(\mathtt{i})\!\on{-indec}$. This is clear under the identification of $\mathbf{C}(\mathtt{i})$ with $A_{n}\!\on{-proj}$ and of $\Theta_{i}$ with $-\otimes_{A_{n}} A_{n}e_{i}$. 
We have thus reduced the problem of finding the underlying coisoinserter to that of finding $\mathbf{W}(\mathfrak{P}) \star \mathbf{D}$, where $\mathbf{D}$ is the diagram
\begin{equation}
\label{IndecDiagram}
\begin{tikzcd}
    \mathbf{P}(\mathtt{i})\!\on{-indec}
    \arrow[r, draw=none, "\raisebox{+1.5ex}{\vdots}" description]
    \arrow[r, bend left, "- \otimes_{A_{n}} A_{n}e_{i_{1}^{1}}"]
    \arrow[r, bend right, swap, "- \otimes_{A_{n}} A_{n}e_{i_{k_{r}}^{r}}"]
    &
    A_{n}\!\on{-indec.proj}.
\end{tikzcd}
\end{equation}

\begin{remark}
 We recommend the reader to first consider the case $n=2$ and $\mathfrak{P}_{0}$ given by $[0,2] = \setj{0} \sqcup \setj{1,2}$. In fact, the only aspect of the remaining arguments that does not carry over verbatim from that case to the general case is taken care of by the explicit description of the weight $\mathbf{W}(\mathfrak{P})$ above. We will see that, in the case of $\mathfrak{P}_{0}$, one only needs to, in a sense, adjoin a single isomorphism to the category. In the general case, the form of the weight $\mathbf{W}(\mathfrak{P})$ tells us to add as few isomorphisms as possible to still obtain the sought identifications of isomorphism classes. This considerably facilitates the next step (using coequifiers), which is to ensure that the result is finitary.
\end{remark}

Let $\mathcal{P}^{\mathtt{S}}$ be the skeletal subcategory of $\mathbf{P}(\mathtt{i})\!\on{-indec}$ with 
\[
 \on{Ob}\mathcal{P}^{\mathtt{S}} = \setj{A,Ae_{k} \otimes_{\Bbbk} e_{0}A \; | \; k \in [0,n]}
\]
and let $I_{\mathcal{P}^{\mathtt{S}}}$ be its inclusion functor. 
Further, consider the full subcategories $\mathcal{C}^{\mathtt{A}}, \mathcal{C}^{\mathtt{S}}$ of $A\!\on{-indec.proj}$ given by 
\[
\begin{aligned}
&\on{Ob}\mathcal{C}^{\mathtt{A}} = \setj{A \otimes_{A} Ae_{j}, (Ae_{k} \otimes_{\Bbbk} e_{0}A) \otimes_{A} Ae_{j} \; | \; k \in [0,n], j \in [1,n]} \text{ and} \\
&\on{Ob}\mathcal{C}^{\mathtt{S}} = \setj{Ae_{k} \; | \; k \in [0,n]}.
\end{aligned}
\]
Using $e_{0}A \otimes_{A} Ae_{j} \simeq e_{0}Ae_{j} = \Bbbk\setj{b_{j}}$ for $j \in [1,n]$, let $F^{\mathtt{A,S}}: \mathcal{C}^{\mathtt{A}} \rightarrow \mathcal{C}^{\mathtt{S}}$ be the $\Bbbk$-linear functor induced by $A$-module isomorphisms
\[
\begin{aligned}
 Ae_{k} \otimes_{\Bbbk} e_{0}Ae_{j} &\xiso Ae_{k} \text{ and } &A \otimes_{A} Ae_{j} \xiso Ae_{j}. \\
 e_{k} \otimes b_{j} &\mapsto e_{k} \quad &1 \otimes e_{j} \mapsto e_{j} \quad
\end{aligned}
\]
In particular, $F^{\mathtt{A,S}}((Ae_{k} \otimes_{\Bbbk} e_{0}A) \otimes_{A} Ae_{j}) = Ae_{k}$, $F^{\mathtt{A,S}}(A \otimes_{A} Ae_{j}) = Ae_{j}$, and $F^{\mathtt{A,S}}$ is an equivalence. The full images of $(-\otimes_{A} Ae_{j}) \circ I_{\mathcal{P}^{\mathtt{S}}}$, for $j \in [1,n]$, are contained in $\mathcal{C}^{\mathtt{A}}$, and so we may consider the diagram
\begin{equation}\label{reducedDiagram}
 \begin{tikzcd}[column sep = large]
    \mathcal{P}^{\mathtt{S}}
    \arrow[rr, draw=none, "\raisebox{+1.5ex}{\vdots}" description]
    \arrow[rr, bend left=13, "F^{\mathtt{A,S}}\circ (- \otimes_{A_{n}}e_{i_{1}^{1}})\circ I_{\mathcal{P}^{\mathtt{S}}}"]
    \arrow[rr, bend right=13, swap, "F^{\mathtt{A,S}}\circ (- \otimes_{A_{n}}e_{i_{1}^{1}})\circ I_{\mathcal{P}^{\mathtt{S}}}"]
    & &
    \mathcal{C}^{\mathtt{S}}
\end{tikzcd}
\end{equation}
in $\mathbf{Cat}_{\Bbbk}$. Since the inclusions of the subcategories $\mathcal{C}^{\mathtt{S}}$, $\mathcal{P}^{\mathtt{S}}$ are both equivalences of categories, Diagram \ref{IndecDiagram} and Diagram \ref{reducedDiagram} are equivalent as $2$-functors to $\mathbf{Cat}_{\Bbbk}$. We may thus compute the colimit of this diagram in order to obtain an underlying category equivalent to the colimit of diagram \ref{IndecDiagram}, and, similarly, the universal cones will be compatible under such equivalence. This again reduces the problem of finding the colimit $\mathbf{C}^{\mathtt{W}}(\mathfrak{P})$. To simplify the notation, let $F_{i} := F^{\mathtt{A,S}} \circ (- \otimes_{A_{n}} A_{n}e_{i}) \circ I_{\mathcal{P}^{\mathtt{S}}}$, for $i \in [1,n]$. It is easy to describe $F_{i}$ explicitly. For example, we have
\[
 \begin{tikzcd}
  \left(Ae_{1} \otimes e_{0}A \xrightarrow{e_{1} \otimes e_{0} \mapsto a_{1} \otimes 2e_{0}} Ae_{0} \otimes_{\Bbbk} e_{0}A \arrow[r, "F_{i}"]\right) & \left( Ae_{1} \xrightarrow{e_{1} \mapsto 2a_{1}} Ae_{0}\right)
 \end{tikzcd}.
\]
Observe that all objects of $\mathcal{P}^{\mathtt{S}}$ and $\mathcal{C}^{\mathtt{S}}$ are cyclic modules. Similarly to Equations \eqref{HomsLambda}, we use the images of the cyclic generators $e_{j} \otimes e_{0}$ and $1$ to denote the morphisms in these categories.

\begin{notation}
 Due to the abundance of subscripts and superscripts in our notation coming from the enumerations induced by $\mathfrak{P}$, in Proposition \ref{DescribeCoiso}, Lemma \ref{CoequifCalculation} and their respective proofs we deviate from the standard notation, denoting the components of a natural transformation as if they were the argument of a function, rather than using a subscript. In other words, given functors $\mathrm{F,G}: \mathcal{C} \rightarrow \mathcal{D}$, a natural transformation $\tau: \mathrm{F} \rightarrow \mathrm{G}$ and some $X \in \on{Ob}\mathcal{C}$, we denote the component of $\tau$ at $X$ by $\tau(X)$ rather than $\tau_{X}$.
\end{notation}

\begin{proposition}\label{DescribeCoiso}
 The weighted colimit $\mathcal{C}^{\mathtt{W}}$ of the diagram \eqref{reducedDiagram} is obtained from $\mathcal{C}^{\mathtt{S}}$ by freely adjoining invertible morphisms 
 \[
\xi_{l}^{m}(j): Ae_{j} \rightarrow Ae_{j} \text{ and } \xi_{l}^{m}(A): Ae_{i_{l}^{m}} \rightarrow Ae_{i_{l+1}^{m}},
 \]
 for every $j \in [0,n]$, $m \in [1,r]$ and $l \in [i_{1}^{m}, i_{k_{m}-1}^{m}]$, and then imposing the relations giving
 \begin{equation}\label{CoisoRelations}
 \begin{aligned}
   &c \xi_{l}^{m}(0) = \xi_{l}^{m}(0)c; \;
   b_{i_{l+1}^{m}}\xi_{l}^{m}(0) = \xi_{l}^{m}(A)b_{i_{l}^{m}}; \;
   a_{i_{l+1}^{m}}\xi_{l}^{m}(A) = \xi_{l}^{m}(0)a_{i_{l}^{m}}; \\
   & a_{k}\xi_{l}^{m}(k) = \xi_{l}^{m}(0)a_{k}, \;
   b_{k}\xi_{l}^{m}(0) = \xi_{l}^{m}(k)b_{k}, \text{ for } k \in [1,n] \text{ and }l,m \text{ as above}.
 \end{aligned}
 \end{equation}
 The universal cone $\mathcal{C}^{\mathtt{S}} \xrightarrow{W} \mathcal{C}^{\mathtt{W}}$ is given by inclusion.
\end{proposition}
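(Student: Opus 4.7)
The plan is to verify the universal property of the iterated coisoinserter in $\mathbf{Cat}_{\Bbbk}$ by an explicit generators-and-relations argument, invoking Proposition \ref{OneDimAspect} to reduce to its one-dimensional aspect. Let $\mathcal{D}$ denote the category presented as in the statement. First I will exhibit the inclusion $W \colon \mathcal{C}^{\mathtt{S}} \hookrightarrow \mathcal{D}$ together with invertible natural transformations $\xi_{l}^{m} \colon F_{i_{l}^{m}} \Rightarrow F_{i_{l+1}^{m}}$ whose components are $\xi_{l}^{m}(A)$ at the object $A$ and $\xi_{l}^{m}(k)$ at the object $Ae_{k} \otimes_{\Bbbk} e_{0}A$, and then verify the universal factorization.

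The first main step is to show that the relations of \eqref{CoisoRelations} are precisely the naturality squares of $\xi_{l}^{m}$. For this I would compute $F_{i}(f)$ for each basis element $f$ of the Hom-spaces of $\mathcal{P}^{\mathtt{S}}$ enumerated in \eqref{HomsBigA}, using the defining iso $F^{\mathtt{A,S}}$ together with the identification $e_{0}A \otimes_{A} Ae_{i} \cong e_{0}Ae_{i}$, $v \otimes w \mapsto vw$. A direct calculation shows that any basis element carrying a $c$ in the second tensor factor (such as $b_{j} \otimes c$, $c \otimes c$, $e_{0} \otimes c$, and the endomorphisms $e_{j} \otimes c$ of $Ae_{j} \otimes_{\Bbbk} e_{0}A$), the endomorphism $c \colon A \to A$, the multiplication-by-$c$ map $c \colon Ae_{0} \otimes_{\Bbbk} e_{0}A \to A$, and the bimodule map $a_{j} \colon Ae_{j} \otimes_{\Bbbk} e_{0}A \to A$ are all sent to zero by $F_{i}$ for $i \in [1,n]$, because $ce_{i} = 0$ and $a_{j}b_{i} = 0$ in $A$; these give vacuous naturality constraints. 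The five basis elements with non-zero $F_{i}$-image are the endomorphism $c \otimes e_{0}$ of $Ae_{0} \otimes_{\Bbbk} e_{0}A$ (mapping to $c \colon Ae_{0} \to Ae_{0}$), the multiplication $e_{0} \colon Ae_{0} \otimes_{\Bbbk} e_{0}A \to A$ (mapping to $b_{i} \colon Ae_{0} \to Ae_{i}$), the element $e_{0} \otimes c + c \otimes e_{0} + \sum_{j}a_{j} \otimes b_{j} \colon A \to Ae_{0} \otimes_{\Bbbk} e_{0}A$ (mapping to $a_{i} \colon Ae_{i} \to Ae_{0}$), and the bimodule maps $a_{k} \otimes e_{0}$ and $b_{k} \otimes e_{0}$ (mapping to $a_{k}$ and $b_{k}$ respectively). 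Instantiated at $i = i_{l}^{m}$ and $i = i_{l+1}^{m}$, these yield exactly the five families in \eqref{CoisoRelations}. Since these basis elements span the Hom-spaces of $\mathcal{P}^{\mathtt{S}}$, $\Bbbk$-bilinearity and functoriality extend the naturality check to all morphisms.

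The universality of $\mathcal{D}$ will then follow directly from the presentation. Given any $\Bbbk$-linear category $\mathcal{E}$, any $\Bbbk$-linear functor $V \colon \mathcal{C}^{\mathtt{S}} \to \mathcal{E}$, and invertible natural transformations $\eta_{l}^{m} \colon VF_{i_{l}^{m}} \Rightarrow VF_{i_{l+1}^{m}}$, the components of each $\eta_{l}^{m}$ satisfy the same naturality identities as the $\xi_{l}^{m}$, that is, \eqref{CoisoRelations} with $\xi$ replaced by $\eta$. By the universal property of the $\Bbbk$-linear category presented by $\mathcal{C}^{\mathtt{S}}$ together with the invertible generators modulo those relations, there is a $\Bbbk$-linear functor $\widehat{V} \colon \mathcal{D} \to \mathcal{E}$, unique up to invertible natural transformation, with $\widehat{V} \circ W = V$ sending each $\xi_{l}^{m}$-component to the corresponding component of $\eta_{l}^{m}$. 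This furnishes the required factorization and identifies $\mathcal{D}$ with $\mathcal{C}^{\mathtt{W}}$.

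The hard part will be the bookkeeping in the middle paragraph: one must enumerate all basis elements of the Hom-spaces in \eqref{HomsBigA}, including the less obvious ones in $\on{Hom}(Ae_{j} \otimes_{\Bbbk} e_{0}A, Ae_{k} \otimes_{\Bbbk} e_{0}A) = e_{j}Ae_{k} \otimes_{\Bbbk} e_{0}Ae_{0}$, and verify that the non-vanishing ones yield precisely the five families listed, so that the presentation of $\mathcal{D}$ neither collapses more than required nor leaves any naturality condition unimposed.
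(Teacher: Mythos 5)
Your proposal is correct and follows essentially the same route as the paper: compute the images under $F_{i}$ of the basis morphisms of $\mathcal{P}^{\mathtt{S}}$ from \eqref{HomsBigA} to show that naturality of the component data is equivalent to the relations \eqref{CoisoRelations}, then use the presentation of $\mathcal{C}^{\mathtt{W}}$ to get the (essentially unique) factorization of any cone $(V,\eta_{l}^{m})$, and conclude via Proposition \ref{OneDimAspect}. The only difference is that you carry out the full case-by-case enumeration (correctly, including the vanishing of all morphisms with a $c$ in the right-hand tensor factor and of the maps to $A$ killed by $a_{j}b_{i}=0$), where the paper verifies only the hardest naturality constraint, coming from $z\colon A \rightarrow Ae_{0}\otimes_{\Bbbk}e_{0}A$, and declares the rest easier; aside from the harmless omission of the identity endomorphisms from your list of morphisms with nonzero image (they impose vacuous conditions), the argument matches the paper's.
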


\begin{proof}
 Recall that in order to satisfy the universal property, we need natural isomorphisms $\xi_{l}^{m}: W \circ F_{i_{l}^{m}} \xiso W \circ F_{i_{l+1}^{m}}$ which are universal in the sense of \eqref{UniPropCoIso}.
 
 Let $\mathcal{D}$ be a $\Bbbk$-linear category and let $G: \mathcal{C}^{\mathtt{S}} \rightarrow \mathcal{D}$ be a $\Bbbk$-linear functor. 
 Given $l,m$ as above, a natural isomorphism $\tau_{l}^{m}: G \circ F_{i_{l}^{m}} \xiso G \circ F_{i_{l+1}^{m}}$ consists of $n+2$ isomorphisms, indexed by the objects of $\mathcal{P}^{\mathtt{S}}$. We write $\tau_{l}^{m}(j)$ for $\tau_{l}^{m}(Ae_{j} \otimes_{\Bbbk} e_{0}A)$.
 
 For $j \in [0,n]$, we have
 \[
  \tau_{l}^{m}(j): (G \circ F_{i_{l}^{m}})(Ae_{j} \otimes_{\Bbbk} e_{0}A) = G(Ae_{j}) \xiso G(Ae_{j}) = (G \circ F_{i_{l+1}^{m}})(Ae_{j} \otimes_{\Bbbk} e_{0}A).
 \]
 We denote the remaining component by $\tau_{l}^{m}(A)$ and have
 \[
  \tau_{l}^{m}(A): (G \circ F_{i_{l}^{m}})(A) = G(Ae_{i_{l}^{m}}) \xiso G(Ae_{i_{l+1}^{m}}) = (G \circ F_{i_{l+1}^{m}})(A).
 \]
 Equations \ref{HomsBigA} explicitly describe the morphisms in $\mathcal{P}^{\mathtt{S}}$ and so the conditions for naturality of $\tau_{l}^{m}$ can be found using these. We claim that $\tau_{l}^{m}$ is natural if and only if it satisfies the equations
 \begin{equation}\label{XiIsNat}
 \begin{aligned}
   & G(a_{j})\tau_{l}^{m}(j) = \tau_{l}^{m}(0)G(a_{j}), \;
   G(b_{j})\tau_{l}^{m}(0) = \tau_{l}^{m}(j)G(b_{j}) \text{ for } j \in [1,n]; \\
   &G(c) \tau_{l}^{m}(0) = \tau_{l}^{m}(0)G(c); \;
   G(b_{i_{l+1}^{m}})\tau_{l}^{m}(0) = \tau_{l}^{m}(A)G(b_{i_{l}^{m}}); \\
   &G(a_{i_{l+1}^{m}})\tau_{l}^{m}(A) = \tau_{l}^{m}(0)G(a_{i_{l}^{m}}). 
 \end{aligned}
 \end{equation}
 We show that the last equation is necessary for the naturality of $\tau_{l}^{m}$ - the remaining cases are very similar, and, in fact, easier.
 
 Consider the morphism $z: A \rightarrow Ae_{0} \otimes_{\Bbbk} e_{0}A$ given by 
 \[
  z(1) = e_{0} \otimes c + c \otimes e_{0} + \sum_{j=1}^{n} a_{j} \otimes b_{j}
 \]
 Tensoring with $Ae_{i_{l}^{m}}$ from the right gives
 \[
  \begin{aligned}
   &A \otimes_{A_{n}} Ae_{i_{l}^{m}} \rightarrow (Ae_{0} \otimes_{\Bbbk} e_{0}A) \otimes_{A_{n}} Ae_{i_{l}^{m}} \\
   &1 \otimes_{A_{n}} e_{i_{l}^{m}} \mapsto (e_{0} \otimes_{\Bbbk} c + c \otimes_{\Bbbk} e_{0} + \sum_{j=1}^{n} a_{j} \otimes_{\Bbbk} b_{j}) \otimes_{A_{n}} e_{i_{l}^{m}} = (a_{i_{l}^{m}} \otimes_{\Bbbk} b_{i_{l}^{m}}) \otimes_{A_{n}} e_{i_{l}^{m}}
  \end{aligned}
 \]
 and, to find $F^{\mathtt{A,S}}(z)$, we identify $(a_{i_{l}^{m}} \otimes_{\Bbbk} b_{i_{l}^{m}}) \otimes_{A} e_{i_{l}^{m}}$ with $a_{i_{l}^{m}} \otimes_{\Bbbk} b_{i_{l}^{m}}$. By definition, $F^{\mathtt{A,S}}$ sends this morphism to the morphism $Ae_{i_{l}^{m}} \xrightarrow{a_{i_{l}^{m}}} Ae_{0}$. Thus, $F_{i_{l}^{m}}(z) = a_{i_{l}^{m}}$, and similarly, $F_{i_{l+1}^{m}}(z) = a_{i_{l+1}^{m}}$. Naturality of $\tau_{l}^{m}$ requires
 \[
  G(a_{i_{l+1}^{m}}) \circ \tau_{l}^{m}(A) = (G \circ F_{i_{l+1}^{m}})(z) \circ \tau_{l}^{m}(A) = \tau_{l}^{m}(0) \circ (G \circ F_{i_{l}^{m}})(z) = \tau_{l}^{m}(0) \circ G(a_{i_{l}^{m}})
 \]
 which proves the necessity of the last equation in Equations \ref{XiIsNat}.
 Comparing Equations \ref{XiIsNat} with Equations \ref{CoisoRelations}, we see that 
 \[
 \setj{\xi_{l}^{m}(A)}\cup \setj{\xi_{l}^{m}(j) \; | \; j \in [0,n]}
 \]
 gives a natural isomorphism $\xi_{l}^{m}: W \circ F_{i_{l}^{m}} \rightarrow W \circ F_{i_{l+1}^{m}}$. 
 
 Let $G: \mathcal{C}^{\mathtt{S}} \rightarrow \mathcal{D}$ be a $\Bbbk$-linear functor and let $(\tau_{l}^{m})$ be a collection of natural isomorphisms $\tau_{l}^{m}: G \circ F_{i_{l}^{m}} \xiso G \circ F_{i_{l+1}^{m}}$, indexed by $l,m$ as above.
 
 We may extend $G$ to $\widehat{G}: \mathcal{C}^{\mathtt{W}} \rightarrow \mathcal{D}$ by setting $\widehat{G}(\xi_{l}^{m}(X)) := \tau_{l}^{m}(X)$, for $X \in \on{Ob}\mathcal{P}^{\mathtt{S}}$. From the definition of $(\mathcal{C}^{\mathtt{W}}, W)$, it follows that $\widehat{G}$ is the unique functor which satisfies $\widehat{G} \circ W = G$ and $\tau_{l}^{m} = \widehat{G}\bullet \xi_{l}^{m}$, for all $l,m$.
 
 This shows the one-dimensional aspect of the universal property of $(\mathcal{C}^{\mathtt{W}}, W, (\xi_{l}^{m})_{l,m})$.  From Proposition \ref{OneDimAspect} it now follows that this triple indeed gives the required colimit.
\end{proof}

\begin{remark}
 We have only proved the universal property with respect to functors and collections of natural isomorphisms giving $2$-transformations from $\mathbf{W}(\mathfrak{P})$, rather than all strong transformations, as we generally should when considering a bicategorical weighted colimit. Further, since $\widehat{G}$ is unique (rather than unique up to a compatible natural isomorphisms), we obtain isomorphisms rather than just equivalences. In other words, the above determines the weighted $2$-categorical colimit. However, the bicategorical colimit we were looking for is a PIE colimit, and bicategorical PIE colimits are equivalent to $2$-categorical PIE colimits, as observed for example in \cite[Remark 1.10.29]{Ha}, so the $2$-categorical colimit above is also the corresponding bicategorical colimit. This is not necessarily the case for weighted colimits which are not PIE colimits.
\end{remark}

Recall the above auxiliary categories $\mathcal{P}^{\mathtt{S}}, \mathcal{C}^{\mathtt{A}}, \mathcal{C}^{\mathtt{S}}$ and the auxiliary functors $F^{\mathtt{A,S}}, F_{i}, I_{\mathcal{P}^{\mathtt{S}}}$. We further consider the additional auxiliary category $\mathcal{N}^{\mathtt{S}}$, which is the full subcategory  of $\mathcal{P}^{\mathtt{S}}$ with 
$
 \on{Ob}\mathcal{N}^{\mathtt{S}} = \setj{Ae_{j} \otimes_{\Bbbk} e_{0}A \; | \; j \in [0,n]}
$,
and also its inclusion functor $I_{\mathcal{N}^{\mathtt{S}}}$.

We will calculate the multiple coequifier of all pairs
\[
 (W \circ F^{\mathtt{A,S}}) \bullet s_{i_{l}^{m},i_{l+1}^{m}} \bullet I_{\mathcal{N}^{\mathtt{S}}} \text{ and } \widehat{\xi}_{l}^{m} \bullet I_{\mathcal{N}}^{\mathtt{S}} \text{, for } l,m \text{ as above,}
 \]
 of modifications from $W \circ F^{\mathtt{A,S}} \circ (- \otimes_{A_{n}} A_{n} e_{i_{l}^{m}}) \circ I_{\mathcal{N}^{\mathtt{S}}}$ to $W \circ F^{\mathtt{A,S}} \circ (- \otimes_{A_{n}} A_{n} e_{i_{l+1}^{m}}) \circ I_{\mathcal{N}^{\mathtt{S}}}$, which we denote by $\mathfrak{s}_{l}^{m}$ and $\mathfrak{x}_{l}^{m}$, respectively.
 
 Similarly to the previous calculation, this coequifier $\mathcal{C}^{\mathtt{W}} \xrightarrow{R} \mathcal{C}^{\mathtt{WR}}$ in $\mathbf{Cat}_{\Bbbk}$ gives the underlying category and the cone functor of $\mathbf{C}^{\mathtt{W}} \xrightarrow{\Omega^{\mathtt{R}}} \mathbf{C}^{\mathtt{WR}}$, on the level of indecomposable objects.

\begin{lemma}\label{CoequifCalculation}
 The above described coequifier $\mathcal{C}^{\mathtt{WR}}$ is obtained from $\mathcal{C}^{\mathtt{S}}$ by adjoining invertible morphisms $\xi_{l}^{m}(A): Ae_{i_{l}^{m}} \rightarrow Ae_{i_{l+1}^{m}}$, satisfying the equations
 \begin{equation}\label{CoequiRelations}
   b_{i_{l+1}^{m}} = \xi_{l}^{m}(A)b_{i_{l}^{m}}; \;
   a_{i_{l+1}^{m}}\xi_{l}^{m}(A) = a_{i_{l}^{m}} \text{ for } l \in [i_{1}^{m},i_{k_{m}-1}^{m}] \text{ and } m \in [1,r].
 \end{equation}
 The universal cone $\mathcal{C}^{\mathtt{W}} \xrightarrow{R} \mathcal{C}^{\mathtt{WR}}$ is the functor is given by
 \begin{itemize}
  \item sending $\mathcal{C}^{\mathtt{S}}$, viewed as a subcategory of $\mathcal{C}^{\mathtt{W}}$ under the embedding described in Proposition \ref{DescribeCoiso}, to itself, viewed as a subcategory of $\mathcal{C}^{\mathtt{WR}}$, as described above;
  \item for all $l,m$ as above and any $j \in [0,n]$, sending $\xi_{l}^{m}(j)$ to $\on{id}_{Ae_{j}}$ and sending $\xi_{l}^{m}(A)$ to itself.
 \end{itemize}
 In particular, $(\mathcal{C}^{\mathtt{WR}})^{\euler{D}} \simeq A_{r}\!\on{-proj}$, where $(-)^{\euler{D}}$ is the envelope considered in Section \ref{s3}.
\end{lemma}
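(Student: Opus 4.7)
The plan mirrors the proof of Proposition \ref{DescribeCoiso}: by Corollary \ref{SummaryCoCo} the coequifier in $[\csym{B}_n,\mathbf{Cat}_\Bbbk]_\Bbbk$ is computed pointwise in $\mathbf{Cat}_\Bbbk$, and, as in the discussion preceding Proposition \ref{DescribeCoiso}, we further restrict to the skeletal subcategories $\mathcal{P}^\mathtt{S}, \mathcal{C}^\mathtt{S}$ of indecomposables using Proposition \ref{IndecSuffice}. Since a coequifier is a PIE colimit, Proposition \ref{OneDimAspect} allows verifying only its one-dimensional aspect.

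The central step is to compute the components of $\mathfrak{s}_l^m$ and $\mathfrak{x}_l^m$ at each $Ae_k \otimes_\Bbbk e_0 A \in \mathcal{N}^\mathtt{S}$. The component of $\mathfrak{x}_l^m$ is, by construction, the morphism $\xi_l^m(k)$ introduced in Proposition \ref{DescribeCoiso}. For $\mathfrak{s}_l^m$, unpacking $(\euler{s}_{j,k})_N = (\euler{c}_N^k)^{-1}\circ \varphi_N^{j,k}\circ \euler{c}_N^j$ from Lemma \ref{IsoActions} at $N = Ae_k \otimes_\Bbbk e_0 A$ traces $(e_k \otimes b_j) \otimes_A e_j$ to $(e_k \otimes b_k) \otimes_A e_k$ via the chain $e_k \otimes b_j \mapsto e_k \otimes c \mapsto e_k \otimes b_k$. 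Both endpoints are identified with $e_k \in Ae_k$ under $F^\mathtt{A,S}$, so the component equals $\on{id}_{Ae_k}$.

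Consequently, the coequifier is obtained from $\mathcal{C}^\mathtt{W}$ by imposing the relations $\xi_l^m(k) = \on{id}_{Ae_k}$ for all $l, m$ and $k \in [0,n]$. Substituting these identifications into \eqref{CoisoRelations}, the three families not involving $\xi_l^m(A)$ become tautological, while the two families that do involve $\xi_l^m(A)$ reduce precisely to \eqref{CoequiRelations}. The description of the universal cone $R$ then follows directly from the cone $W$ of Proposition \ref{DescribeCoiso}.

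For the envelope assertion, the adjoined isomorphisms $\xi_l^m(A)$ identify $Ae_{i_1^m},\ldots,Ae_{i_{k_m}^m}$ within each block, so with the transversal $u_m := i_1^m$ the full subcategory $\mathcal{X}$ on $\{Ae_0, Ae_{u_1},\ldots, Ae_{u_r}\}$ is essentially surjective in $\mathcal{C}^\mathtt{WR}$. Define $\Phi: \mathcal{X} \to A_r\!\on{-indec.proj}$ on objects by $Ae_0 \mapsto A_r e_0$, $Ae_{u_m} \mapsto A_r e_m$, and on morphisms by $a_{u_m} \mapsto a_m$, $b_{u_m} \mapsto b_m$, $c \mapsto c$; the zigzag relations of $A_r$ are satisfied, so $\Phi$ is well-defined. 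The hom-space computation --- inside a block, any composition of adjoined $\xi$'s reduces by \eqref{CoequiRelations} to an already-existing morphism; between different blocks, any route through $Ae_0$ yields an element of $e_{u_m}Ae_{u_{m'}} = 0$; and endomorphisms of $Ae_{u_m}$ collapse via $c_{u_m} = 0$ in $A$ --- shows that $\Phi$ is fully faithful. Since $\Phi$ is essentially surjective onto a complete list of indecomposable projectives of $A_r$, Proposition \ref{IndecSuffice} yields $(\mathcal{C}^\mathtt{WR})^{\euler{D}} \simeq \mathcal{X}^{\euler{D}} \simeq A_r\!\on{-proj}$. The main obstacle is the hom-space computation, in particular ruling out unexpected nonzero compositions involving the $\xi_l^m(A)$.
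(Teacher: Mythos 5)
Your proposal is correct and takes essentially the same approach as the paper: you compute that the whiskered components of $\mathfrak{s}_{l}^{m}$ are identities while those of $\mathfrak{x}_{l}^{m}$ are the $\xi_{l}^{m}(k)$, conclude that coequifying amounts to imposing $\xi_{l}^{m}(k)=\on{id}_{Ae_{k}}$, observe that the relations \eqref{CoisoRelations} then collapse to \eqref{CoequiRelations}, and invoke Proposition \ref{OneDimAspect} for the universal property. The only divergence is the final claim $(\mathcal{C}^{\mathtt{WR}})^{\euler{D}} \simeq A_{r}\!\on{-proj}$, where you construct an explicit fully faithful, dense comparison on a transversal instead of the paper's brief remark that the adjoined isomorphisms, being attached along linear orders, do not enlarge Hom-spaces; both versions rest on the same (and similarly sketched) verification that no unexpected morphisms arise from composites involving the $\xi_{l}^{m}(A)$.
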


\begin{proof}
 Since $\mathfrak{x}_{l}^{m}$ is just the restriction of $\xi_{l}^{m}$ to the subset of objects of $\mathcal{P}^{\mathtt{S}}$ excluding the regular bimodule $A$, we know its components. To find the components of $\mathfrak{s}_{l}^{m}$, let $j,j' \in [1,n]$ and $k \in [0,n]$, and recall the definition of $\euler{s}_{i_{l}^{m},i_{l+1}^{m}}$ given in Lemma \ref{IsoActions} to find that, for any $j \in [0,n]$ and any $x \in Ae_{j}$, we have
 \[
  \begin{tikzcd}
  (Ae_{j} \otimes_{\Bbbk} e_{0}A) \otimes_{A} Ae_{i_{l}^{m}} \arrow[r, "\euler{s}_{i_{l}^{m},i_{l+1}^{m}}"] & (Ae_{j} \otimes_{\Bbbk} e_{0}A) \otimes_{A} Ae_{i_{l+1}^{m}} \arrow[r, "F^{\mathtt{S}}"] & Ae_{j} \arrow[r, "W"] & Ae_{j} \\
  (x \otimes b_{i_{l}^{m}}) \otimes e_{i_{l}^{m}} \arrow[r, mapsto] & (x \otimes b_{i_{l+1}^{m}}) \otimes e_{i_{l+1}^{m}} \arrow[r, mapsto] & x
  \end{tikzcd}
 \]
 and the image is exactly the same if we omit $\euler{s}_{i_{l}^{m},i_{l+1}^{m}}$ from the diagram. We conclude that $\mathfrak{s}_{l}^{m}({Ae_{j} \otimes e_{0}A}) = \on{id}_{Ae_{j}}$. Observe that, after substituting $\xi_{l}^{m}(k) = \on{id}_{Ae_{k}}$ in \ref{CoisoRelations}, the equations still hold, so the functor described in the lemma is well-defined, and the assignment $\zeta_{l}^{m}(Ae_{k} \otimes_{\Bbbk} e_{0}A) := \on{id}_{Ae_{k}}$, $(\zeta_{l}^{m})(A) = \xi_{l}^{m}(A)$ gives an isomorphism coinciding with $\mathfrak{s}_{l}^{m}$ on $Ae_{j} \otimes_{\Bbbk} e_{0}A$, for $j \in [0,n]$.
 
 We conclude that a $\Bbbk$-linear functor $G: \mathcal{C}^{\mathtt{W}} \rightarrow \mathcal{D}$ coequifies $\mathfrak{s}_{l}^{m}$ and $\mathfrak{x}_{l}^{m}$ if and only if it sends $\xi_{l}^{m}(k)$ to $\on{id}_{G(Ae_{k})}$. It is now clear that there is a unique functor $\widehat{G}: \mathcal{C}^{\mathtt{WR}} \rightarrow \mathcal{D}$ such that $\widehat{G} \circ R = G$.
 If we view $\mathcal{C}^{\mathtt{WR}}$ as a subcategory of $\mathcal{C}^{\mathtt{W}}$, under the clear embedding, we may describe $\widehat{G}$ as the corresponding restriction of $G$.
That $\mathcal{C}^{\mathtt{WR}}$ is the coequifier now follows from Proposition \ref{OneDimAspect}.
 
 To see that $(\mathcal{C}^{\mathtt{WR}})^{\euler{D}} \simeq A_{r}\!\on{-proj}$, note that $\mathcal{C}^{\mathtt{WR}}$ is obtained from the category of indecomposable $A_{n}$-projectives by adjoining isomorphisms that do not increase the dimensions of the $\on{Hom}$-spaces, due to the relations given in the lemma and the fact that we only adjoin isomorphisms along the linear orders $[i_{1}^{m}, i_{k_{m}}^{m}]$, so no new automorphisms can be obtained by composing the adjoined isomorphisms.
\end{proof}

\begin{proposition}\label{Identifications}
 The $\Bbbk$-linear pseudofunctor $\mathbf{C}^{\mathtt{WR}}(\mathfrak{P})$ is a finitary birepresentation of $\csym{B}$, with $\mathbf{C}^{\mathtt{WR}}(\mathfrak{P})(\mathtt{i}) \simeq A_{r}\!\on{-proj}$. Further, it satisfies the following properties:
 \begin{itemize}
 \item The canonical strong transformation $\mathbf{C} \xrightarrow{\Omega^{\mathtt{WR}}(\mathfrak{P})} \mathbf{C}^{\mathtt{WR}}(\mathfrak{P})$ is faithful. 
 \item Given indecomposable objects $X, Y \in \mathbf{C}(\mathtt{i})$, the map $\Omega^{\mathtt{WR}}_{X,Y}(\mathfrak{P})$ is a bijection unless $X \not\simeq Y$ and $\Omega^{\mathtt{WR}}(\mathfrak{P})(X) \simeq \Omega^{\mathtt{WR}}(\mathfrak{P})(Y)$. 
 \item If $U$ is a transversal of $\mathfrak{P}$, then the restriction of $\Omega^{\mathtt{WR}}(\mathfrak{P})$ to $\on{add}\setj{Ae_{k} \; | \; k \in U}$ is an equivalence of categories.
 \end{itemize}
\end{proposition}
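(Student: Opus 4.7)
My strategy is to establish the identification $\mathbf{C}^{\mathtt{WR}}(\mathfrak{P})(\mathtt{i}) \simeq A_{r}\!\on{-proj}$ first, and then extract each of the three bulleted claims by inspecting the explicit description of the cone functor from Lemma \ref{CoequifCalculation}. For the identification: Corollary \ref{SummaryCoCo} together with Proposition \ref{PseudoFCoco} reduces the colimit in $[\csym{B},\mathbf{Cat}_{\Bbbk}^{\euler{D}}]_{\Bbbk}$ to the pointwise colimit in $\mathbf{Cat}_{\Bbbk}$ followed by the envelope $(-)^{\euler{D}}$. Since $\csym{B}$ has a unique object $\mathtt{i}$, I only need to inspect the evaluation at $\mathtt{i}$, and by Proposition \ref{IndecSuffice} this pointwise colimit may be further reduced to that of the restricted diagram \eqref{reducedDiagram}, which by Proposition \ref{DescribeCoiso} and Lemma \ref{CoequifCalculation} is $\mathcal{C}^{\mathtt{WR}}$. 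Applying $(-)^{\euler{D}}$ yields the desired equivalence, making $\mathbf{C}^{\mathtt{WR}}(\mathfrak{P})$ a finitary birepresentation.

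Under the identifications above, $\Omega^{\mathtt{WR}}(\mathfrak{P})_{\mathtt{i}}$ is, up to equivalence, the inclusion of $\mathcal{C}^{\mathtt{S}}$ as the full subcategory of $\mathcal{C}^{\mathtt{WR}}$ on the objects $\setj{Ae_{k} \; | \; k \in [0,n]}$, composed with the envelope embedding into $A_{r}\!\on{-proj}$. Faithfulness of this inclusion follows directly from the defining relations \eqref{CoequiRelations}, which rewrite $b_{i_{l+1}^{m}}$ and $a_{i_{l}^{m}}$ as compositions involving the newly adjoined $\xi_{l}^{m}(A)$ but impose no equation between pre-existing morphisms of $\mathcal{C}^{\mathtt{S}}$; this property is preserved by $(-)^{\euler{D}}$, yielding the first bullet. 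For the second bullet, I would observe that the adjoined $\xi_{l}^{m}(A)$ go only between objects $Ae_{i_{l}^{m}}$ and $Ae_{i_{l+1}^{m}}$ belonging to a common partition block, so no new morphism arises between $Ae_{j}$ and $Ae_{j'}$ when $j = j'$ or $\mathfrak{P}(j) \neq \mathfrak{P}(j')$; comparing the source dimensions given by \eqref{HomsBigA} with the corresponding target dimensions in $A_{r}\!\on{-proj}$ then upgrades the injection to a bijection.

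For the third bullet, any transversal $U$ has the property that distinct $k, k' \in U$ satisfy $\mathfrak{P}(k) \neq \mathfrak{P}(k')$, so the restriction of $\Omega^{\mathtt{WR}}(\mathfrak{P})$ to $\on{add}\setj{Ae_{k} \; | \; k \in U}$ is fully faithful by the second bullet and sends distinct indecomposables to non-isomorphic indecomposables. Essential surjectivity follows since every partition class is represented in $U$ and thus corresponds to some indecomposable in its image, and every indecomposable of $A_{r}\!\on{-proj}$ is of this form.

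The main obstacle is the second bullet: controlling which Hom-space identifications appear when passing to the Karoubi envelope, as in principle an adjoined isomorphism between non-isomorphic indecomposables could produce new morphisms between objects in distinct blocks through idempotent factorizations. This is manageable because Lemma \ref{CoequifCalculation} arranges the adjoined $\xi_{l}^{m}(A)$ into disjoint linear chains $[i_{1}^{m}, i_{k_{m}}^{m}]$, one per partition block, ruling out compositions that cross between blocks and guaranteeing that Hom-space dimensions agree in all non-excepted cases.
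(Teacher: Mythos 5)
Your proposal is correct and follows essentially the same route as the paper: reduce to the pointwise colimit at $\mathtt{i}$, use the explicit descriptions in Proposition \ref{DescribeCoiso} and Lemma \ref{CoequifCalculation}, and transport the conclusions through $(-)^{\euler{D}}$, which preserves faithfulness, with the remaining bullets being statements about indecomposables preserved under equivalence. The only (cosmetic) slip is citing \eqref{HomsBigA} for the source Hom-dimensions in the second bullet; the relevant Hom-spaces of $\mathbf{C}(\mathtt{i}) \simeq A_{n}\!\on{-proj}$ are the spaces $e_{j}A_{n}e_{k}$, but the dimension comparison you intend goes through unchanged.
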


\begin{proof}
 As observed earlier, applying $(-)^{\euler{D}}$ to the diagram 
 \begin{equation}\label{IndecDiagramII}
 \mathcal{C}^{\mathtt{S}} \xrightarrow{W} \mathcal{C}^{\mathtt{W}} \xrightarrow{R} \mathcal{C}^{\mathtt{WR}}
 \end{equation}
 gives a diagram in $\mathbf{Cat}_{\Bbbk}^{\euler{D}}$ equivalent to that underlying 
 \[
\mathbf{C} \xrightarrow{\Omega^{\mathtt{W}}} \mathbf{C}^{\mathtt{W}}(\mathfrak{P}) \xrightarrow{\Omega^{\mathtt{R}}} \mathbf{C}^{\mathtt{WR}}(\mathfrak{P}).
 \]
Since $(-)^{\euler{D}}$ sends faithful functors to faithful functors, and the remaining statements concern indecomposable objects and are preserved under equivalences of categories, the result follows from the explicit description of Diagram \eqref{IndecDiagramII}.
\end{proof}

\begin{proposition}\label{CoeqisoIsSimple}
 The birepresentation $\mathbf{C}^{\mathtt{WR}}(\mathfrak{P})$ is simple transitive.
\end{proposition}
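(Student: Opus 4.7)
The plan is to combine the preimage construction of Lemma~\ref{Preimages} with simple transitivity of $\mathbf{C}$ and the description of $\mathbf{C}^{\mathtt{WR}}(\mathfrak{P})$ obtained in Proposition~\ref{Identifications}. Let $\mathbf{I}$ be a proper $\csym{B}$-stable ideal of $\mathbf{C}^{\mathtt{WR}}(\mathfrak{P})$; I aim to derive a contradiction. By Lemma~\ref{Preimages} the preimage $\mathbf{I}' := (\Omega^{\mathtt{WR}}(\mathfrak{P}))^{-1}\mathbf{I}$ is a $\csym{B}$-stable ideal of $\mathbf{C}$, and simple transitivity of $\mathbf{C}$ forces $\mathbf{I}' \in \setj{0,\mathbf{C}}$.

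If $\mathbf{I}' = \mathbf{C}$, then for every $k$ we have $\on{id}_{\Omega^{\mathtt{WR}}(\mathfrak{P})(A_{n}e_{k})} = \Omega^{\mathtt{WR}}(\mathfrak{P})(\on{id}_{A_{n}e_{k}}) \in \mathbf{I}$, and by the third bullet of Proposition~\ref{Identifications} these indecomposables exhaust the isomorphism classes of indecomposable objects of $\mathbf{C}^{\mathtt{WR}}(\mathfrak{P})(\mathtt{i})$. Hence $\mathbf{I}$ contains the identity of every indecomposable, forcing $\mathbf{I} = \mathbf{C}^{\mathtt{WR}}(\mathfrak{P})$ and contradicting properness.

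The remaining case $\mathbf{I}' = 0$ is the main step, where I plan to show $\mathbf{I} = 0$ directly, again contradicting properness. By Krull--Schmidt, $\mathbf{I}$ is determined by its components $\mathbf{I}(X,Y)$ on indecomposable $X,Y$, which under the equivalence $\mathbf{C}^{\mathtt{WR}}(\mathfrak{P})(\mathtt{i}) \simeq A_{r}\!\on{-proj}$ of Proposition~\ref{Identifications} become subspaces of the Hom spaces of $A_{r}\!\on{-proj}$, whose shape is analogous to \eqref{HomsBigA}. The idea is that every non-zero such target Hom space is surjectively covered by $\Omega^{\mathtt{WR}}(\mathfrak{P})$ applied to a suitably chosen source pair in $\mathbf{C}(\mathtt{i})$: the two-dimensional algebra $\on{End}(A_{r}e_{0})$ is hit by $\on{End}(A_{n}e_{0})$, while the one-dimensional endomorphism algebras $\on{End}(A_{r}e_{m})$ and the one-dimensional Hom spaces to and from $A_{r}e_{0}$ are hit by choosing any $j$ from the relevant part of $\mathfrak{P}$ as source or target label; the Hom spaces between indecomposables attached to different parts of $\mathfrak{P}$ vanish outright. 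Combined with $\mathbf{I}' = 0$ and faithfulness of $\Omega^{\mathtt{WR}}(\mathfrak{P})$, this forces $\mathbf{I}(X,Y) = 0$ for every pair of indecomposables, so $\mathbf{I} = 0$.

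The mildly technical point is verifying the stated surjectivities, which reduces to tracing the generators $c, a_{j}, b_{j}$ and the idempotents $e_{k}$ of $A_{n}$ through the explicit construction of $\mathcal{C}^{\mathtt{WR}}$ in Lemma~\ref{CoequifCalculation}: the underlying functor of $\Omega^{\mathtt{WR}}(\mathfrak{P})$ at $\mathtt{i}$ is the composition of the inclusion $\mathcal{C}^{\mathtt{S}} \hookrightarrow \mathcal{C}^{\mathtt{WR}}$ (through which all the relevant generators pass unchanged) with the Karoubi-additive envelope equivalence to $A_{r}\!\on{-proj}$.
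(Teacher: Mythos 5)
Your proposal is correct and follows essentially the same route as the paper: pull the ideal back along $\Omega^{\mathtt{WR}}(\mathfrak{P})$ via Lemma~\ref{Preimages}, invoke simple transitivity of $\mathbf{C}$, and settle both cases of the resulting dichotomy using the properties of $\Omega^{\mathtt{WR}}(\mathfrak{P})$ established in Proposition~\ref{Identifications}. The only difference is cosmetic: the Hom-space surjectivity you propose to check by tracing generators through Lemma~\ref{CoequifCalculation} already follows from the transversal-restriction equivalence (third bullet of Proposition~\ref{Identifications}), which is exactly what the paper uses.
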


\begin{proof}
 Let $\mathbf{I}$ be a $\csym{B}$-stable ideal of $\mathbf{C}^{\mathtt{WR}}(\mathfrak{P})$. Consider the $\csym{B}$-stable ideal $\Omega^{\mathtt{WR}}(\mathfrak{P})^{-1}\mathbf{I}$ of $\mathbf{C}$, as defined in Lemma \ref{Preimages}. 
 Since $\mathbf{C}$ is simple transitive, this latter ideal is either zero, or coincides with all of $\mathbf{C}$. In the former case, it is immediate that $\mathbf{I} = 0$. 
 
 Since 
 Let $\mathbf{I} = \mathbf{C}$. Since $\Omega^{\mathtt{WR}}$ restricted to $\on{add}\setj{Ae_{k} \; | \; k \in U}$, where $U$ is a transversal of $\mathfrak{P}$, is an equivalence of categories, it follows that the functor $\Omega^{\mathtt{WR}}$ is surjective on morphisms - its essential image is all of $\mathbf{C}^{\mathtt{WR}}(\mathfrak{P})(\mathtt{i})$. It follows that 
 $\Omega^{\mathtt{WR}}(\mathfrak{P})^{-1}\mathbf{I} = \mathbf{C}$ implies $\mathbf{I} = \mathbf{C}^{\mathtt{WR}}(\mathfrak{P})$. We conclude that $\mathbf{I} = 0$ or $\mathbf{I} = \mathbf{C}^{\mathtt{WR}}(\mathfrak{P})$, which proves that $\mathbf{C}^{\mathtt{WR}}(\mathfrak{P})$ is simple transitive.
\end{proof}

\begin{theorem}\label{Classification}
 Let $\mathbf{M}$ be a simple transitive birepresentation of $\csym{B}$. We then have $\mathbf{M} \simeq \mathbf{C}^{\mathtt{WR}}(\mathfrak{P}_{\mathbf{M}})$.
\end{theorem}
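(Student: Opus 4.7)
The plan is to apply the universal property of the weighted colimit $\mathbf{C}^{\mathtt{WR}}(\mathfrak{P}_{\mathbf{M}})$ to extract a canonical strong transformation $\widetilde{\Sigma} : \mathbf{C}^{\mathtt{WR}}(\mathfrak{P}_{\mathbf{M}}) \to \mathbf{M}$ from the distinguished strong transformation $\Sigma : \mathbf{C} \to \mathbf{M}$, and then to verify that $\widetilde{\Sigma}$ is an equivalence of birepresentations.

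First I would invoke Lemma \ref{SoughtUni} with $\Upsilon := \Sigma$ and with the required modifications taken to be $\euler{m}_{l}^{m} := \euler{t}_{i_{l}^{m}, i_{l+1}^{m}}$, where the $\euler{t}_{j,k}$ are furnished by Corollary \ref{Equification}. The compatibility condition $\euler{t}_{j,k} \bullet \Gamma = \Sigma \bullet \euler{s}_{j,k}$ of Corollary \ref{Equification} is exactly the hypothesis demanded by Lemma \ref{SoughtUni}, so this application yields a strong transformation $\widetilde{\Sigma} : \mathbf{C}^{\mathtt{WR}}(\mathfrak{P}_{\mathbf{M}}) \to \mathbf{M}$ satisfying $\widetilde{\Sigma} \circ \Omega^{\mathtt{WR}}(\mathfrak{P}_{\mathbf{M}}) \simeq \Sigma$. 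Because $\csym{B}$ has a unique object $\mathtt{i}$, it then remains only to show that the single component $\widetilde{\Sigma}_{\mathtt{i}}$ is an equivalence of $\Bbbk$-linear categories.

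To this end, I would fix a transversal $U$ of $\mathfrak{P}_{\mathbf{M}}$ and restrict both $\Sigma$ and $\Omega^{\mathtt{WR}}(\mathfrak{P}_{\mathbf{M}})$ to the full subcategory $\on{add}\setj{A_{n}e_{k} \; | \; k \in U}$ of $\mathbf{C}(\mathtt{i})$. Proposition \ref{JakobSummary} ensures that the resulting restriction of $\Sigma$ is an equivalence onto $\mathbf{M}(\mathtt{i})$, while Proposition \ref{Identifications} ensures that the resulting restriction of $\Omega^{\mathtt{WR}}(\mathfrak{P}_{\mathbf{M}})$ is an equivalence onto $\mathbf{C}^{\mathtt{WR}}(\mathfrak{P}_{\mathbf{M}})(\mathtt{i})$. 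Combined with $\widetilde{\Sigma}_{\mathtt{i}} \circ \Omega^{\mathtt{WR}}(\mathfrak{P}_{\mathbf{M}})_{\mathtt{i}} \simeq \Sigma_{\mathtt{i}}$, this realizes $\widetilde{\Sigma}_{\mathtt{i}}$, up to natural isomorphism, as a composition of a quasi-inverse of the latter equivalence with the former, and therefore exhibits $\widetilde{\Sigma}_{\mathtt{i}}$ itself as an equivalence.

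I expect the main subtlety to lie not in either of these two steps but in the careful verification that the indexing and compatibility data produced by Corollary \ref{Equification} matches precisely the data consumed by Lemma \ref{SoughtUni}, which is largely a bookkeeping matter involving the labelling induced by the partition $\mathfrak{P}_{\mathbf{M}}$. As an additional safeguard, one may separately extract faithfulness of $\widetilde{\Sigma}$ by applying Lemma \ref{Preimages} and Proposition \ref{CoeqisoIsSimple} to the $\csym{B}$-stable ideal $\widetilde{\Sigma}^{-1}(0)$: simple transitivity of $\mathbf{C}^{\mathtt{WR}}(\mathfrak{P}_{\mathbf{M}})$ forces this ideal to be trivial, since otherwise it would send $\Sigma \simeq \widetilde{\Sigma} \circ \Omega^{\mathtt{WR}}(\mathfrak{P}_{\mathbf{M}})$ to zero, contradicting Proposition \ref{JakobSummary}.
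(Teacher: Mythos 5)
Your proposal is correct and follows essentially the same route as the paper: apply Corollary \ref{Equification} to produce the modifications $\euler{t}_{l}^{m}$ compatible with $\Sigma \bullet \euler{s}_{i_{l}^{m},i_{l+1}^{m}}$, feed them into the universal property of Lemma \ref{SoughtUni} to obtain $\widetilde{\Sigma}$ with $\widetilde{\Sigma} \circ \Omega^{\mathtt{WR}}(\mathfrak{P}_{\mathbf{M}}) \simeq \Sigma$, and then conclude via restriction to a transversal $U$, where Proposition \ref{JakobSummary} and Proposition \ref{Identifications} make both $\Sigma_{|U}$ and $\Omega^{\mathtt{WR}}(\mathfrak{P}_{\mathbf{M}})_{|U}$ equivalences. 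Your explicit quasi-inverse composition and the extra faithfulness safeguard are harmless elaborations of the paper's final step.
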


\begin{proof}
 Write $\mathfrak{P}_{\mathbf{M}}$ as 
\[
 [0,n] = \setj{0} \sqcup \setj{i_{1}^{1}, i_{2}^{1},\ldots, i_{k_{1}}^{1}} \sqcup \cdots \sqcup \setj{i_{1}^{r}, i_{2}^{r},\ldots, i_{k_{r}}^{r}}.
\]
 Consider the strong transformation $\Sigma: \mathbf{C} \rightarrow \mathbf{M}$ described in Proposition \ref{JakobSummary}. Using Corollary \ref{Equification}, we choose, for $m \in [1,r]$ and $l \in [i_{1}^{m}, i_{k_{m}-1}^{m}]$, an invertible modification $\euler{t}_{l}^{m}$ from
 $\Sigma \circ \Theta_{i_{l}^{m}}$ to $\Sigma \circ \Theta_{i_{l+1}^{m}}$ such that $\euler{t}_{l}^{m} \bullet \Gamma = \Sigma \bullet \euler{s}_{i_{l}^{m},i_{l+1}^{m}}$.
 
 This choice provides a strong transformation $\widetilde{\Sigma}: \mathbf{C}^{\mathtt{WR}}(\mathfrak{P}) \rightarrow \mathbf{M}$, using the universal property of $\mathbf{C}^{\mathtt{WR}}(\mathfrak{P})$. In particular, $\widetilde{\Sigma} \circ \Omega^{\mathtt{WR}}(\mathfrak{P}) \simeq \Sigma$. Let $U$ be a transversal for $\mathfrak{P}$. Denote by $\Sigma_{|U}$ the restriction of $\Sigma$ to $\on{add}\setj{Ae_{k} \; | \; k \in U}$. Using similar notation for $\widetilde{\Sigma} \circ \Omega^{\mathtt{WR}}(\mathfrak{P})$, we obtain
 \[
  \widetilde{\Sigma} \circ \Omega^{\mathtt{WR}}(\mathfrak{P})_{|U} \simeq \Sigma_{|U}.
 \]
 But Proposition \ref{JakobSummary} and Proposition \ref{Identifications} prove that $\Sigma_{|U}$ and $\Omega^{\mathtt{WR}}(\mathfrak{P})_{|U}$ are equivalences of categories. It follows that $\widetilde{\Sigma}$ is an equivalence of categories, and thus also an equivalence of birepresentations.
\end{proof}

\begin{theorem}\label{TheMainTheorem}
 The map
 \begin{equation}
  \begin{aligned}
 \Big\{ 
  \begin{aligned}
  \text{Set partitions of } [1,n]
  \end{aligned}
&\Big\}
 \longrightarrow 
  \Big\{ 
  \begin{aligned}
  \text{Simple transitive birepresentations of } \csym{B}_{n}
  \end{aligned}
\Big\}/\simeq
 \\
 &\mathfrak{P} \longrightarrow  \mathbf{C}^{\mathtt{WR}}(\mathfrak{P})  
 \end{aligned}
  \end{equation}
  is a bijection.
\end{theorem}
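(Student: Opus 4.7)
The theorem is essentially a summary statement, and the plan is to assemble three results that have already been established: Proposition \ref{CoeqisoIsSimple} (existence), Theorem \ref{Classification} (surjectivity of the map), and Corollary \ref{InvarianceOfPartitions} combined with a calculation of $\mathfrak{P}_{\mathbf{C}^{\mathtt{WR}}(\mathfrak{P})}$ (injectivity).

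First I would handle well-definedness and surjectivity. Proposition \ref{CoeqisoIsSimple} guarantees that $\mathbf{C}^{\mathtt{WR}}(\mathfrak{P})$ is a simple transitive birepresentation for each partition $\mathfrak{P}$, so the map is well-defined up to equivalence. For surjectivity, let $\mathbf{M}$ be an arbitrary simple transitive birepresentation of $\csym{B}_{n}$. If its apex is the $J$-cell corresponding to $\on{add}\setj{{}_{A}A{}_{A}}$, then by \cite[Theorem 18]{MM5} it is equivalent to the unique such representation, which is easily seen to correspond to the trivial partition; otherwise, its apex is the $J$-cell $\on{add}\setj{Ae_{k}\otimes_{\Bbbk}e_{0}A \; | \; k\in[0,n]}$, and Theorem \ref{Classification} yields $\mathbf{M} \simeq \mathbf{C}^{\mathtt{WR}}(\mathfrak{P}_{\mathbf{M}})$. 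Both cases confirm surjectivity.

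For injectivity the key step is to verify that $\mathfrak{P}_{\mathbf{C}^{\mathtt{WR}}(\mathfrak{P})} = \mathfrak{P}$ for every partition $\mathfrak{P}$. By Proposition \ref{Identifications}, $\mathbf{C}^{\mathtt{WR}}(\mathfrak{P})(\mathtt{i}) \simeq A_{r}\!\on{-proj}$, and the strong transformation $\Omega^{\mathtt{WR}}(\mathfrak{P})$ sends indecomposable objects $Ae_{k} \otimes_{\Bbbk} e_{0}A$ and $Ae_{k'} \otimes_{\Bbbk} e_{0}A$ of $\mathbf{C}$ to isomorphic indecomposables of $\mathbf{C}^{\mathtt{WR}}(\mathfrak{P})(\mathtt{i})$ precisely when $\mathfrak{P}(k) = \mathfrak{P}(k')$. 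Since $\mathbf{C}(Ae_{k} \otimes_{\Bbbk} e_{0}A)(A) \simeq Ae_{k}$, evaluating the strongly natural family $\Omega^{\mathtt{WR}}(\mathfrak{P})$ at the generating object shows that the isomorphism classes $[\mathbf{C}^{\mathtt{WR}}(\mathfrak{P})(Ae_{k} \otimes_{\Bbbk} e_{0}A)]_{\oplus}$ coincide if and only if $\mathfrak{P}(k) = \mathfrak{P}(k')$, which is the defining property of $\mathfrak{P}_{\mathbf{C}^{\mathtt{WR}}(\mathfrak{P})}$.

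Armed with this identity, injectivity follows directly: if $\mathfrak{P} \neq \mathfrak{P}'$, then $\mathfrak{P}_{\mathbf{C}^{\mathtt{WR}}(\mathfrak{P})} \neq \mathfrak{P}_{\mathbf{C}^{\mathtt{WR}}(\mathfrak{P}')}$ by the above, and Corollary \ref{InvarianceOfPartitions} then forces $\mathbf{C}^{\mathtt{WR}}(\mathfrak{P}) \not\simeq \mathbf{C}^{\mathtt{WR}}(\mathfrak{P}')$. There is no substantive obstacle at this stage, as all the heavy lifting was done in Sections \ref{s3} and \ref{s4} and in Propositions \ref{Identifications} and \ref{CoeqisoIsSimple}; the only point requiring care is the bookkeeping identifying $\mathfrak{P}_{\mathbf{C}^{\mathtt{WR}}(\mathfrak{P})}$ with $\mathfrak{P}$, and that follows immediately from the explicit description of $\mathbf{C}^{\mathtt{WR}}(\mathfrak{P})(\mathtt{i})$ and the behavior of $\Omega^{\mathtt{WR}}(\mathfrak{P})$ on indecomposable objects given in Proposition \ref{Identifications}.
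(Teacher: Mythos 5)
Your overall route coincides with the paper's own proof: well-definedness from Proposition~\ref{CoeqisoIsSimple}, injectivity from Corollary~\ref{InvarianceOfPartitions} together with the identity $\mathfrak{P}_{\mathbf{C}^{\mathtt{WR}}(\mathfrak{P})} = \mathfrak{P}$ (which you rightly make explicit via Proposition~\ref{Identifications}; the paper leaves this bookkeeping implicit), and surjectivity from Theorem~\ref{Classification}.

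The one genuine problem is the extra case you insert into the surjectivity argument. The simple transitive birepresentation with apex $\on{add}\setj{{}_{A}A{}_{A}}$ does \emph{not} ``correspond to the trivial partition'': the apex is an invariant of transitive birepresentations, and every $\mathbf{C}^{\mathtt{WR}}(\mathfrak{P})$ has apex $\on{add}\setj{Ae_{k}\otimes_{\Bbbk}e_{0}A \; | \; k \in [0,n]}$ (for instance because $\Omega^{\mathtt{WR}}(\mathfrak{P})$ is faithful, so this cell does not annihilate $\mathbf{C}^{\mathtt{WR}}(\mathfrak{P})$), whereas this cell annihilates any birepresentation whose apex is the identity cell. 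Hence that birepresentation is not equivalent to $\mathbf{C}^{\mathtt{WR}}(\mathfrak{P})$ for any $\mathfrak{P}$, and is not in the image of the map; moreover, identifying it with the ``trivial'' partition would clash with injectivity, since that partition already labels a big-apex representation, and including it would make the two sides of the bijection differ in cardinality by one. The statement is to be read under the standing convention fixed in Section~\ref{s4}, just before Proposition~\ref{JakobSummary}, that all (simple) transitive birepresentations considered have apex $\on{add}\setj{Ae_{k}\otimes_{\Bbbk}e_{0}A \; | \; k \in [0,n]}$; the same convention is what licenses your appeal to Theorem~\ref{Classification}, whose proof uses $\Sigma$ and $\mathfrak{P}_{\mathbf{M}}$, defined only for that apex. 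Deleting your identity-apex case and keeping the rest of the argument yields exactly the paper's proof.
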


\begin{proof}
 From Proposition \ref{CoeqisoIsSimple} we have that, for any $\mathfrak{P}$, the birepresentation $\mathbf{C}^{\mathtt{WR}}(\mathfrak{P})$ is simple transitive. From Corollary \ref{InvarianceOfPartitions} it follows that for different set partitions $\mathfrak{P},\mathfrak{P}'$, we have $\mathbf{C}^{\mathtt{WR}}(\mathfrak{P}) \not\simeq \mathbf{C}^{\mathtt{WR}}(\mathfrak{P}')$. Finally, Theorem \ref{Classification} shows that any simple transitive birepresentation $\mathbf{M}$ is equivalent to $\mathbf{C}^{\mathtt{WR}}(\mathfrak{P})$, for $\mathfrak{P} = \mathfrak{P}_{\mathbf{M}}$.
\end{proof}

\begin{proposition}
 Let $\mathfrak{P}$ be a set partition, let $\mathfrak{P}'$ be a refinement of $\mathfrak{P}$ and let $\mathfrak{P}''$ be a refinement of $\mathfrak{P}'$. There is a strong transformation $\widetilde{\Omega}(\mathfrak{P},\mathfrak{P}'): \mathbf{C}^{\mathtt{WR}}(\mathfrak{P}) \rightarrow \mathbf{C}^{\mathtt{WR}}(\mathfrak{P}')$ such that
 \[
  \Omega^{\mathtt{WR}}(\mathfrak{P}') \simeq \widetilde{\Omega}(\mathfrak{P},\mathfrak{P}') \circ \Omega^{\mathtt{WR}}(\mathfrak{P}).
 \]
  Further, we have
 \[
  \widetilde{\Omega}(\mathfrak{P},\mathfrak{P}'') \simeq \widetilde{\Omega}(\mathfrak{P}',\mathfrak{P}'') \circ \widetilde{\Omega}(\mathfrak{P},\mathfrak{P}')
 \]
\end{proposition}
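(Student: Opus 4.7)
The plan is to construct $\widetilde{\Omega}(\mathfrak{P},\mathfrak{P}')$ by feeding the universal property of $\mathbf{C}^{\mathtt{WR}}(\mathfrak{P})$ furnished by Lemma \ref{SoughtUni} with the target $\mathbf{M} = \mathbf{C}^{\mathtt{WR}}(\mathfrak{P}')$, the canonical strong transformation $\Upsilon = \Omega^{\mathtt{WR}}(\mathfrak{P}'):\mathbf{C}\to\mathbf{C}^{\mathtt{WR}}(\mathfrak{P}')$, and a family of modifications indexed by the enumeration of $\mathfrak{P}$. Since each block of $\mathfrak{P}$ is contained in some block of $\mathfrak{P}'$, each consecutive pair $(i_l^m, i_{l+1}^m)$ from the enumeration of $\mathfrak{P}$ lies within a common block of $\mathfrak{P}'$. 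Vertically composing the appropriate sequence of structure modifications $\widehat{\euler{x}}$ of $\mathbf{C}^{\mathtt{WR}}(\mathfrak{P}')$ between these two indices then produces an invertible modification $\euler{y}_l^m: \Omega^{\mathtt{WR}}(\mathfrak{P}')\circ\Theta_{i_l^m}\xiso\Omega^{\mathtt{WR}}(\mathfrak{P}')\circ\Theta_{i_{l+1}^m}$.

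The next step is to verify the coequifier-type compatibility $\euler{y}_l^m\bullet\Gamma=\Omega^{\mathtt{WR}}(\mathfrak{P}')\bullet\euler{s}_{i_l^m,i_{l+1}^m}$ demanded by Lemma \ref{SoughtUni}. Given that the analogous relation holds for each constituent structure modification $\widehat{\euler{x}}$ with respect to the consecutive pairs of $\mathfrak{P}'$, it suffices to establish the telescoping identity $\euler{s}_{k,l}\vcomp\euler{s}_{j,k}=\euler{s}_{j,l}$ for any triple $j,k,l\in[1,n]$. This is immediate from the explicit formula of Lemma \ref{IsoActions}: since $\euler{s}_{j,k}$ is defined via $\varphi_N^{j,k}=(\cdot a_k)_N^{-1}\circ(\cdot a_j)_N$, we have $\varphi_N^{k,l}\circ\varphi_N^{j,k}=(\cdot a_l)_N^{-1}\circ(\cdot a_j)_N=\varphi_N^{j,l}$, and the canonical isomorphisms $\euler{c}_N^k$ cancel telescopically. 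Lemma \ref{SoughtUni} then delivers $\widetilde{\Omega}(\mathfrak{P},\mathfrak{P}')$ together with the factorization $\widetilde{\Omega}(\mathfrak{P},\mathfrak{P}')\circ\Omega^{\mathtt{WR}}(\mathfrak{P})\simeq\Omega^{\mathtt{WR}}(\mathfrak{P}')$.

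For the composition law, both $\widetilde{\Omega}(\mathfrak{P},\mathfrak{P}'')$ and $\widetilde{\Omega}(\mathfrak{P}',\mathfrak{P}'')\circ\widetilde{\Omega}(\mathfrak{P},\mathfrak{P}')$ are strong transformations $\mathbf{C}^{\mathtt{WR}}(\mathfrak{P})\to\mathbf{C}^{\mathtt{WR}}(\mathfrak{P}'')$, and iterated use of the factorization from the previous step yields
\[
\widetilde{\Omega}(\mathfrak{P}',\mathfrak{P}'')\circ\widetilde{\Omega}(\mathfrak{P},\mathfrak{P}')\circ\Omega^{\mathtt{WR}}(\mathfrak{P})\simeq\widetilde{\Omega}(\mathfrak{P}',\mathfrak{P}'')\circ\Omega^{\mathtt{WR}}(\mathfrak{P}')\simeq\Omega^{\mathtt{WR}}(\mathfrak{P}''),
\]
while the left-hand candidate satisfies $\widetilde{\Omega}(\mathfrak{P},\mathfrak{P}'')\circ\Omega^{\mathtt{WR}}(\mathfrak{P})\simeq\Omega^{\mathtt{WR}}(\mathfrak{P}'')$ by definition. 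A parallel check shows that the induced modification data on the pairs of $\mathfrak{P}$ coincide up to invertible modification — again by telescoping the $\widehat{\euler{x}}$-modifications of $\mathbf{C}^{\mathtt{WR}}(\mathfrak{P}'')$ first along $\mathfrak{P}'$ and then along $\mathfrak{P}$, versus directly along $\mathfrak{P}$. The uniqueness clause of Lemma \ref{SoughtUni} then yields the stated equivalence. The main obstacle I anticipate is the combinatorial bookkeeping required to align the three enumerations, in particular to check that the telescoped composites of $\widehat{\euler{x}}$-modifications for $\mathfrak{P}'$ produce the same invertible modification (up to invertible modification) regardless of how the segments are subdivided — but once the telescoping identity for $\euler{s}$ is in hand, this is routine.
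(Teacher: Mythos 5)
Your proposal is correct and follows essentially the same route as the paper: both obtain $\widetilde{\Omega}(\mathfrak{P},\mathfrak{P}')$ by applying the universal property of Lemma \ref{SoughtUni} to $\Upsilon=\Omega^{\mathtt{WR}}(\mathfrak{P}')$ and then deduce the composition law from the uniqueness clause of that universal property. You additionally spell out the step the paper leaves implicit, namely producing the required invertible modifications by composing the structure modifications of $\mathbf{C}^{\mathtt{WR}}(\mathfrak{P}')$ and checking compatibility via the telescoping identity $\euler{s}_{k,l}\vcomp\euler{s}_{j,k}=\euler{s}_{j,l}$ (modulo the minor notational point that one should whisker the hatted modifications with $\Omega^{\mathtt{R}}(\mathfrak{P}')$ before the coequifier relation holds), which is a correct and welcome elaboration.
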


\begin{proof}
 Recall the universal property of $\mathbf{C}^{\mathtt{WR}}(\mathfrak{P})$ described in Lemma \ref{SoughtUni}. Applying it on the strong transformation $\Omega^{\mathtt{WR}}(\mathfrak{P}')$, we obtain $\widetilde{\Omega}(\mathfrak{P},\mathfrak{P}')$ satisfying
 \[
  \Omega^{\mathtt{WR}}(\mathfrak{P}') \simeq \widetilde{\Omega}(\mathfrak{P},\mathfrak{P}') \circ \Omega^{\mathtt{WR}}(\mathfrak{P}).
 \]
  Further, we conclude that a strong transformation satisfying the above property is unique up to invertible modification.
  
  Similarly, we have a strong transformation $\widetilde{\Omega}(\mathfrak{P},\mathfrak{P}'')$, unique up to invertible modification, such that
  \[
   \widetilde{\Omega}(\mathfrak{P},\mathfrak{P}'') \circ \Omega^{\mathtt{WR}}(\mathfrak{P}) \simeq \Omega^{\mathtt{WR}}(\mathfrak{P}'').
   \]
   We have
   \[
   \widetilde{\Omega}(\mathfrak{P}',\mathfrak{P}'') \circ \widetilde{\Omega}(\mathfrak{P},\mathfrak{P}') \circ \Omega^{\mathtt{WR}}(\mathfrak{P}) \simeq \widetilde{\Omega}(\mathfrak{P}',\mathfrak{P}'') \circ \Omega^{\mathtt{WR}}(\mathfrak{P}') \simeq \Omega^{\mathtt{WR}}(\mathfrak{P}''),
  \]
  which, by the above described uniqueness of $\widetilde{\Omega}(\mathfrak{P},\mathfrak{P}'')$, proves that
 \[
  \widetilde{\Omega}(\mathfrak{P},\mathfrak{P}'') \simeq \widetilde{\Omega}(\mathfrak{P}',\mathfrak{P}'') \circ \widetilde{\Omega}(\mathfrak{P},\mathfrak{P}').
 \]
\end{proof}

\begin{remark}
 The above construction of finitary birepresentations using coequifiers and coisoinserters essentially relies on the properties of the bijection
 \[
  e_{0}Ae_{i} \xiso e_{0}Ae_{j}, \qquad b_{i} \mapsto b_{j}
 \]
 which may be viewed as induced by the corresponding automorphism of $A$ sending $e_{k}$ to $e_{k}$, for $k \neq i,j$, $e_{i}$ to $e_{j}$ and $e_{j}$ to $e_{i}$. One is led to speculate that the construction can be mimicked for any basic finite dimensional algebra $B$ with a fixed \idem\ $\setj{f_{1},\ldots, f_{s}}$ and an automorphism $\psi$ permuting $\setj{f_{1},\ldots, f_{s}}$ with a fixed point. However, this is not the case. Consider the quotient $B$ of the path algebra of
 \[
  \begin{tikzcd}[sep = small]
   2 \arrow[r, bend right, swap, "b_{2}"] & 0 \arrow[l, bend right,swap, "a_{2}"] \arrow[r, bend left, "a_{1}"] & 1  \arrow[l, bend left, "b_{1}"]
  \end{tikzcd} \text{ by } \mathcal{R}^{4}, \text{ where } \mathcal{R} \text{ is the arrow ideal.}
 \]
 The quiver automorphism fixing $0$ and swapping $1$ and $2$ gives an automorphism $\psi$ of $B$. Denote the induced \idem\ by $\setj{f_{0},f_{1},f_{2}}$. For $\psi$ to give a natural isomorphism $-\otimes_{B} Bf_{1} \xiso - \otimes_{B} Bf_{2}$ on $\on{add}\setj{f_{0}B}$ extending $f_{0}Bf_{1} \xrightarrow{\psi_{|f_{0}Bf_{1}}} f_{0}Bf_{2}$, the diagram
 \[
  \begin{tikzcd}
   f_{0}Bf_{1} \arrow[r, "b_{1}a_{1} \cdot -"] \arrow[d, "\psi"] & f_{0}Bf_{1} \arrow[d, "\psi"] \\
   f_{0}Bf_{2} \arrow[r, "b_{1}a_{1} \cdot -"] & f_{0}Bf_{2}
  \end{tikzcd}
 \]
 would have to commute. However, chasing the element $b_{1}$, we obtain
 \[
  \begin{tikzcd}
   b_{1} \arrow[r, mapsto] & b_{1}a_{1}b_{1} \arrow[dr, mapsto] \\
   b_{2} \arrow[r, mapsto] & b_{1}a_{1}b_{2} \arrow[r, "\neq", no head, swap] & b_{2}a_{2}b_{2}
  \end{tikzcd}
 \]
 One can also construct examples where the automorphism of the algebra does indeed give a natural isomorphism as above, which is further monoidal - and thus gives a modification of strong transformations. One example is the quotient $B'$ of the path algebra of
 \[
  \begin{tikzcd}
   3 \arrow[dr, bend left, "b_{3}"] \arrow[dd, bend right, "c_{3}"] & & 1  \arrow[dd, bend left, "c_{1}"] \arrow[dl, bend left, "b_{1}"] \\
   & 0 \arrow[ur, bend left, "a_{1}"] \arrow[dr, bend left, "a_{2}"] \arrow[ul, bend left, "a_{3}"] \arrow[dl, bend left, "a_{4}"] \\
   4 \arrow[uu, bend left, shift left, "c_{4}"]  \arrow[ur, bend left, "b_{4}"] & & 2 \arrow[ul, bend left, "b_{2}"] \arrow[uu, bend right, shift right, "c_{2}"]
  \end{tikzcd}
 \]
 by the ideal generated by paths
 \[
 \begin{aligned}
  a_{3}b_{1}, a_{4}b_{1}, a_{3}b_{2}, a_{4}b_{2}, a_{1}b_{3},  a_{1}b_{4}, a_{2}b_{3}, a_{2}b_{4}, a_{k}b_{k}, b_{k}a_{k}, \text{ for } k \in [1,4]
 \end{aligned}
 \]
 together with the relations making all the $3$-cycles at a vertex coincide, and sending the fourth power of the arrow ideal to zero. Labelling the \idem\ coming from the quiver by $\setj{f_{0}, \ldots, f_{4}}$, we find that we have $\on{End}_{B'\!\on{-proj}}(B'f_{0}) \simeq \Bbbk[x]/\langle x^{2} \rangle$ and that $\on{Rad}\on{End}_{B'\!\on{-proj}}(B'f_{0})\otimes_{B'} B'f_{i} = 0$, for $i \neq 0$. 
 For the permutation $\sigma = (12)(34)$, the elements $x_{i} = a_{i} + c_{\sigma(i)}a_{\sigma(i)} \in f_{i}B'f_{0}$ are such that $f_{0}B'f_{i} \xrightarrow{\cdot x_{i}} f_{0}B'f_{0}$ is a $\Bbbk$-linear isomorphism. The quiver automorphism swapping $1,2$ and fixing the remaining vertices gives an automorphism $\psi_{(12)}$ of $B'$. 
 The resulting linear isomorphism $f_{0}B'f_{1} \xrightarrow{\psi} f_{0}B'f_{2}$ coincides with the composition $f_{0}B'f_{1} \xrightarrow{\cdot x_{1}} f_{0}B'f_{0} \xrightarrow{(\cdot x_{2})^{-1}} f_{0}B'f_{2}$. 
 These properties are completely analogous to those of star algebras, and using them one may verify that there is a finitary birepresentation of $(\on{add}\setj{{}_{B'}B'_{B'}, B'f_{k} \otimes_{\Bbbk} f_{0}B'},\otimes_{B'})$ given by an analogous (multiple) coequifier of (multiple) coisoinserter.
 
 However, for such examples, the analysis of the action matrices and Cartan matrices such as that for star algebras in \cite{Zi} has not been conducted, and so we cannot easily generalize our classification of simple transitive birepresentations to these cases.
 
\end{remark}

\vspace{5mm}

\noindent
Department of Mathematics, Uppsala University, Box. 480, SE-75106, Uppsala, SWEDEN,
email: {\tt mateusz.stroinski\symbol{64}math.uu.se}


\begin{thebibliography}{9999999999}
\bibitem[Br]{Br} M.~Brandenburg. Bicategorical colimits of tensor categories. Preprint, arXiv:2001.10123
\bibitem[Ca]{Ca} N.~Canevali. $2$-filtered bicolimits and finite weighted bilimits commute in {\bf Cat}. Bachelor's thesis, Universidad de Buenos Aires, 2016.
\bibitem[CM]{CM} A.~Chan, V.~Mazorchuk.   Diagrams and discrete extensions for finitary 2-representations.  Math. Proc. Cambridge Philos. Soc. {\bf 166} (2019), no. 2, 325--352.
\bibitem[CR]{CR} J.~Chuang, R.~Rouquier. Derived equivalences for symmetric groups and $\mathfrak{sl}_{2}$-categorification. Ann. of Math. (2) {\bf 167} (2008), no. 1, 245--298.
\bibitem[CW]{CW} J.~Comes, B.~Wilson. Deligne's category $\underline{\on{Rep}}(GL_{\delta})$ and representations of general linear supergroups. Represent. Theory, {\bf 16} (2012), 568--609.
\bibitem[De]{De} P.~Deligne. La cat{\' e}gorie des repr{\' e}sentations du groupe sym{\' e}trique $S_{t}$, lorsque $t$ n’est pas un entier naturel. Algebraic  groups  and  homogeneous  spaces, 209-273, Tata Inst. Fund. Res. Stud. Math., 19, Tata Inst. Fund. Res., Mumbai, 2007.
\bibitem[EGNO]{EGNO} P.~Etingof, S.~Gelaki, D.~Nikshych, V.~Ostrik. Tensor categories.
Mathematical Surveys and Monographs {\bf 205}. American Mathematical Society, Providence, RI, 2015.
\bibitem[ENO]{ENO} P.~Etingof, D.~Nikshych, V.~Ostrik. On fusion categories. Ann. of Math. (2) {\bf 162} (2005), no. 2, 581--642.
\bibitem[ET]{ET} M.~Ehrig, D.~Tubbenhauer. Algebraic properties of zigzag algebras. Comm. Algebra {\bf 48} (2020), no. 1, 11--36.
\bibitem[Fi]{Fi} T.~Fiore. Pseudo limits, biadjoints, and pseudo algebras: categorical foundations of conformal field theory. Mem. Amer. Math. Soc. {\bf 182} (2006), no. 860, x+171 pp.
\bibitem[GPS]{GPS} R.~Gordon, A.J.~Power, R.~Street. Coherence for tricategories. Mem. Amer. Math. Soc. {\bf 117} (1995), no. 558, vi+81 pp.
\bibitem[GS]{GS} R.~Garner, M.~Shulman. Enriched categories as a free cocompletion. Adv. Math. {\bf 289} (2016), 1-94.
\bibitem[Ha]{Ha} S.~Hazratpour. A logical study of some $2$-categorical aspects of topos theory. PhD thesis, University of Birmingham, 2019.
\bibitem[Jo1]{Jo1} H.~Jonsson. Cell structure of bimodules over radical square zero Nakayama algebras. Comm. Algebra {\bf 48} (2020), no. 2, 573--588.
\bibitem[Jo2]{Jo2} H.~Jonsson. On simple transitive 2-representations of bimodules over the dual numbers. Algebr. Represent. Theory {\bf 26}, 2057–2083 (2023)
\bibitem[JS]{JS} H.~Jonsson, M.~Stroi{\' n}ski. Simple transitive $2$-representations of bimodules over radical square zero Nakayama algebras via localization. J. Algebra {\bf 612} (2023) , 87–114.
\bibitem[Ke1]{Ke1} G.~M.~Kelly, Elementary Observations on $2$-categorical limits. Bull. Austral. Math. Soc. {\bf 39} (1989), no. 2, 301--317.
\bibitem[Ke2]{Ke2} G.~M.~Kelly, Basic Concepts of Enriched Category Theory.
 London Math. Soc. Lec. Note Ser. {\bf 64}. Cambridge U. Press, 1982.
\bibitem[KL1]{KL1} M.~Khovanov, A.~Lauda. A diagrammatic approach to categorification of quantum groups I. Represent. Theory {\bf 13} (2009), 309--347.
\bibitem[KL2]{KL2} M.~Khovanov, A.~Lauda. A categorification of quantum $\mathfrak{sl}_{n}$. Quantum Topol. {\bf 1} (2010), no. 1, 1--92.
\bibitem[KL3]{KL3} M.~Khovanov, A.~Lauda. A diagrammatic approach to categorification of quantum groups II. Trans. Amer. Math. Soc. {\bf 363} (2011), no. 5, 2685--2700.
\bibitem[Kh]{Kh} M.~Khovanov. A categorification of the Jones polynomial. Duke Math. J. {\bf 101} (2000), no. 3, 359--426.
\bibitem[Ma]{Ma} V.~Mazorchuk. Classification problems in $2$-representation theory. S{\~ a}o Paulo J. Math. Sci. 11 (2017), no.1, 1--22.
\bibitem[MM1]{MM1} V.~Mazorchuk, V.~Miemietz. Cell $2$-representations of finitary $2$-categories. Compositio Math. \textbf{147}  (2011), no. 5, 1519--1545.
\bibitem[MM2]{MM2} V.~Mazorchuk, V.~Miemietz. Additive versus abelian $2$-representations of
fiat $2$-categories. Mosc. Math. J. {\bf 14} (2014), no. 3, 595--615, 642.
\bibitem[MM3]{MM3} V.~Mazorchuk, V.~Miemietz. Endomorphisms of cell $2$-representations. Int. Math. Res. Not. IMRN  {\bf 2016}, no. 24, 7471--7498.
\bibitem[MM4]{MM4} V.~Mazorchuk, V.~Miemietz. Morita theory for finitary $2$-categories.
Quantum Topol. {\bf 7} (2016), no. 1, 1--28.
\bibitem[MM5]{MM5} V.~Mazorchuk, V.~Miemietz. Transitive $2$-representations of finitary $2$-categories.
Trans. Amer. Math. Soc. {\bf 368} (2016), no. 11, 7623--7644.
\bibitem[MM6]{MM6} V.~Mazorchuk, V.~Miemietz. Isotypic faithful $2$-representations of $\mathcal{J}$-simple
fiat $2$-categories. Math. Z. \textbf{282} (2016), no. 1-2, 411--434.
\bibitem[MMMT]{MMMT} M.~Mackaay, V.~Mazorchuk, V.~Miemietz, D.~Tubbenhauer. Simple transitive 2-representations
via (co)algebra $1$-morphisms. Indiana Univ. Math. J. {\bf 68} (2019), no. 1, 1--33.
\bibitem[MMMTZ1]{MMMTZ1} M. Mackaay, V. Mazorchuk, V. Miemietz, D. Tubbenhauer, X. Zhang. Finitary birepresentations of finitary bicategories. Forum Mathematicum {\bf 33} (2021), 1261-1320.
\bibitem[MMMTZ2]{MMMTZ2} M. Mackaay, V. Mazorchuk, V. Miemietz, D. Tubbenhauer, X. Zhang. Simple transitive 2-representations of Soergel bimodules for finite Coxeter types. . Proc. Lond. Math. Soc. (3) {\bf 126} (2023), no. 5, 1585–1655
\bibitem[MMZ]{MMZ} V.~Mazorchuk, V.~Miemietz, X.~Zhang. Pyramids and $2$-representations. Rev. Mat. Iberoam. \textbf{36} (2020), no. 2, 387--405.
\bibitem[MZ]{MZ} V.~Mazorchuk, X.~Zhang. Simple transitive 2-representations
for two non-fiat 2-ca\-te\-go\-ries of projective functors. Ukra{\" i}n. Math. J. 
{\bf 70} (2018), no. 12, 1625--1649.
\bibitem[Os]{Os} V.~Ostrik. Module categories, weak Hopf algebras and modular invariants. Transform. Groups {\bf 8} (2003), no. 2, 177-206.
\bibitem[Ri]{Ri} M.~Ritter. On universal properties of preadditive and additive categories. Bachelor's thesis, University of Stuttgart, 2016.
\bibitem[Ro]{Ro} R.~Rouquier. 2-Kac-Moody algebras. Preprint, arXiv: 0812.5023
\bibitem[St]{St} R.~Street. Fibrations in bicategories. Cahiers Topologie G{\' e}om. Diff{\' e}rentielle {\bf 21} (1980), no. 2, 111-160.
\bibitem[Zi]{Zi} J.~Zimmermann. Simple transitive 2-representations of left cell 2-subcategories of projective functors for star algebras. Comm. Algebra {\bf 47} (2019), no. 3, 1222--1237.
\end{thebibliography}
\end{document}